\newtheorem{theorem}{Theorem}[section]
\newtheorem{lemma}[theorem]{Lemma}
\theoremstyle{definition}
\newtheorem{definition}[theorem]{Definition}
\newtheorem{remark}[theorem]{Remark}
\numberwithin{equation}{section}
\begin{document}

\title[The asymptotic quantization error for the doubling measures]{On the asymptotic quantization error for the doubling measures on Moran sets}
\author{Sanguo Zhu}
\address{School of Mathematics and Physics, Jiangsu University
of Technology\\ Changzhou 213001, China.}
\email{sgzhu@jsut.edu.cn}

\subjclass[2000]{Primary 28A80, 28A78; Secondary 94A15}
\keywords{Moran sets, doubling measures, quantization error, Voronoi partition}

\begin{abstract}
We study the quantization errors for the doubling probability measures $\mu$ which are supported on a class of Moran sets $E\subset\mathbb{R}^q$. For each $n\geq 1$, let $\alpha_n$ be an arbitrary $n$-optimal set for $\mu$ of order $r$ and $\{P_a(\alpha_n)\}_{a\in\alpha_n}$ an arbitrary Voronoi partition with respect to $\alpha_n$. We denote by $I_a(\alpha_n,\mu)$ the integral $\int_{P_a(\alpha_n)}d(x,a)^rd\mu(x)$ and define
\begin{eqnarray*}
\underline{J}(\alpha_n,\mu):=\min\limits_{a\in\alpha_n}I_a(\alpha_n,\mu),\; \overline{J}(\alpha_n,\mu):=\max\limits_{a\in\alpha_n}I_a(\alpha_n,\mu).
\end{eqnarray*}
Let $e_{n,r}(\mu)$ denote the $n$th quantization error for $\mu$ of order $r$. Assuming a version of the open set condition for $E$, we prove that
\[
\underline{J}(\alpha_n,\mu),\overline{J}(\alpha_n,\mu)\asymp\frac{1}{n}e_{n,r}^r(\mu).
\]
This result shows that, for the doubling measures on Moran sets $E$, a weak version of Gersho's conjecture holds.
\end{abstract}

\maketitle

\section{Introduction}

One of the main objectives of the quantization problem is to study the error in the approximation of a given probability measures with discrete measures of finite support. We refer to \cite{GN:98} for the deep background of this problem and \cite{GL:00,GL:04} for rigorous mathematical foundations of quantization theory.

For each $n\geq 1$, we write $\mathcal{D}_n:=\{\alpha\subset\mathbb{R}^q:{\rm card}(\alpha)=n\}$. Let $\nu$ be a Borel probability measure on $\mathbb{R}^q$. Let $d$ denote the metric induced by an arbitrary norm on $\mathbb{R}^q$ (in the following, we work with the Euclidean norm). The $n$th quantization error for $\mu$ of order $r\in(0,\infty)$ can be defined by
\begin{eqnarray}\label{error}
e_{n,r}(\nu):=\bigg(\inf_{\alpha\in\mathcal{D}_n}\int d(x,\alpha)^rd\nu(x)\bigg)^{\frac{1}{r}}.
\end{eqnarray}
By \cite[Lemma 3.4]{GL:00}, the quantization error $e_{n,r}(\nu)$ is equal to the minimum error in approximation of $\nu$ with discrete probability measures which are supported on at most $n$ points in the $L_r$-metric.

If the infimum in (\ref{error}) is attained at some $\alpha\in\mathcal{D}_n$, we call $\alpha$ an $n$-optimal set for $\nu$ of order $r$. Let us call points of $\alpha$ \emph{$n$-optimal points} for $\nu$ of order $r$. By \cite[Theorem 4.12]{GL:00}, the collection $C_{n,r}(\nu)$ of all the $n$-optimal set for $\nu$ of order $r$ is non-empty whenever the $r$th moment $\int|x|^rd\nu(x)$ is finite.

The asymptotic properties for the $n$-th quantization error for $\nu$ of order $r$ have been deeply studied for absolutely continuous measures and some singular measures which are supported on fractals (cf. \cite{DT:10,GL:00,GL:03,LJL:01,MR:15,Kr:08,KZ:16,PK:01,Zhu:11}). Next, let us recall a significant concern in quantization theory.

Let $\alpha\subset\mathbb{R}^q$ be a finite set. A Voronoi partition (VP) with respect to $\alpha$ is a
Borel partition $\{P_a(\alpha):a\in\alpha\}$ of $\mathbb{R}^q$ which satisfies
\begin{eqnarray*}
P_a(\alpha)\subset\big\{x\in\mathbb{R}^q:\;{\rm d}(x,a)={\rm d}(x,\alpha)\big\}\;{\rm for\;all}\;\;a\in\alpha.
\end{eqnarray*}
We write $I_a(\alpha,\nu):=\int_{P_a(\alpha)}d(x,a)^rd\nu(x)$ and define
\begin{eqnarray*}
\underline{J}(\alpha,\nu):=\min\limits_{a\in\alpha}I_a(\alpha,\nu),\;\overline{J}(\alpha,\nu):=\max\limits_{a\in\alpha}I_a(\alpha,\nu).
\end{eqnarray*}
A famous conjecture of Gersho (cf. \cite{Ger:79,GL:12}) suggests that for $\alpha_n\in C_{n,r}(\nu)$ and  an arbitrary VP $\{P_a(\alpha)\}_{a\in\alpha}$ with respect to $\alpha_n$, the following holds:

\[
\underline{J}(\alpha_n,\nu),\overline{J}(\alpha_n,\nu)\sim\frac{1}{n}e^r_{n,r}(\nu).
\]
Here, $a_n\sim b_n$ means $a_n/b_n\to 1$ as $n\to\infty$. This conjecture is significant for all probability measures with finite $r$th moment. However, up to now, it has been proved true only for some special classes of one-dimensional probability distributions (cf. \cite{FG:02,GL:12,Kr:13}).

In 2012, Graf, Luschgy and Pag\`{e}s proved that, for a large class of absolutely continuous measures on $\mathbb{R}^q$, a weak version of Gersho's conjecture holds \cite{GL:12}:
\begin{equation}\label{GL}
\underline{J}(\alpha_n,\nu),\overline{J}(\alpha_n,\nu)\asymp\frac{1}{n}e^r_{n,r}(\nu),
\end{equation}
where $a_n\asymp b_n$ indicates that $Cb_n\leq a_n\leq C^{-1}b_n$ for all $n\geq 1$.
For general measures on $\mathbb{R}^q$, it is very difficult even to examine whether (\ref{GL}) holds or not. Therefore, it is significant to ask, for what measures (\ref{GL}) holds.

In the study of the above question, the following quantity for bounded Borel sets $A$ often plays a significant role:
\[
\mathcal{E}_r(A):=\nu(A)|A|^r,
\]
where $|A|$ denotes the diameter of the set $A$. Roughly speaking, we often expect that, for well-behaved probability measures (cf. Lemma \ref{l3}), the optimal points "should", in some sense, be distributed according to the size of $\mathcal{E}_r(A)$. With the above idea in mind, the author proved (\ref{GL}) for Ahlfors-David measures on $\mathbb{R}^q$ (see \cite{Zhu:20}). Recall that a Borel measure $\nu$ is called an $s$-dimensional Ahlfors-David measure if there exist constants $C,\epsilon_0>0$ such that
\[
C\epsilon^s\leq\nu(B(x,\epsilon))\leq C^{-1}\epsilon^s
\]
for every $x\in {\rm supp}(\nu)$ and $\epsilon\in (0,\epsilon_0)$. Here and hereafter, $B(x,\epsilon)$ denotes the closed ball of radius $\epsilon$ which is centered at a point $x\in\mathbb{R}^q$.

In \cite{Zhu:19}, the author proved that (\ref{GL}) is true for the Moran measures on $\mathbb{R}^1$. The Moran measures are the image measures of infinite product measures on the corresponding coding space under the natural projection. The advantage of these measures is, that an interval $I$ can always be excluded from its complement by its two endpoints, so that when we adjust the number of prospective optimal points in $I$, its complement would not be affected unfavorably. However, this is not applicable for Moran measures in higher-dimensional spaces. One of the major obstacles is that, for a given cylinder set $A$ (see Definition \ref{def1}), we are unable to estimate the number of the cylinder sets $B$, with $A,B$ non-overlapping and $\mathcal{E}_r(B)\asymp\mathcal{E}_r(A)$, whose $\epsilon$-neighborhoods intersect that of $A$, no matter how small $\epsilon$ is. Hence, a significant direction of effort is to seek some conditions, under which the above-mentioned numbers are bounded by some constant and then manage to apply the covering technique as descried in \cite{KZ:15} by Kesseb\"{o}hmer and Zhu.

In the present paper, we will prove that, (\ref{GL}) holds for the doubling measures on Moran sets in $\mathbb{R}^q$. We will assume a version of the open set condition which allows cylinder sets to touch one another.

Let $(n_k)_{k=1}^\infty$ be a sequence of positive integers with $\min\limits_{k\geq 1}n_k\geq 2$. For every $k\geq 1$, let $s_{k,j},1\leq j\leq n_k$, be real numbers in $(0,1)$ such that
\begin{eqnarray}\label{zhu1}
\sum_{j=1}^{n_k}s_{k,j}\leq 1;\;\;\inf_{k\geq 1}\min_{1\leq j\leq n_k}s_{k,j}=:\underline{s}>0.
\end{eqnarray}
We denote the empty word by $\theta$. We write
\begin{eqnarray*}
&&\Omega_k:=\prod_{j=1}^k\{1,\ldots,n_j\},\;\;\Psi_{k,h}:=\prod_{j=k+1}^{k+h}\{1,\ldots,n_j\},\;k,h\in\mathbb{N};\\
&&\Omega_{\mathbb{N}}:=\prod_{j=1}^\infty\{1,\ldots,n_j\};\;\;\Omega^*:=\bigcup_{k=1}^\infty\Omega_k.
\end{eqnarray*}

Let $\overline{A}, A^\circ$ denote the closure and interior in $\mathbb{R}^q$ of a set $A\subset\mathbb{R}^q$ respectively. For $k,h\geq 1$, $\sigma\in\Omega_k$ and $\omega\in\Psi_{k,h}$, we write $\sigma\ast\omega$ for the concatenation of $\sigma$ and $\omega$.
\begin{definition}\label{def1}
Let $J$ be a nonempty compact subset of $\mathbb{R}^q$ with $\overline{J^\circ}=J$.
Let $J_\theta:=J$. Let $J_i,i\in\Omega_1$, be subsets of $J$ such that
\begin{enumerate}
\item[(i)]the sets $J_i$ are geometrically similar to $J$ and $|J_i|/|J|=s_{1,i}$;
\item[(ii)] $J_i^\circ\cap J_k^\circ=\emptyset$ for every pair $1\leq i\neq k\leq n_1$.
\end{enumerate}
Let us call the sets $J_i$ \emph{cylinder sets of order one}. Assume that $J_\sigma,\sigma\in\Omega_k$, are defined. For each $\sigma\in\Omega_k$, let $J_{\sigma\ast i}, 1\leq i\leq n_{k+1}$, be subsets of $J_\sigma$ such that
\begin{enumerate}
\item[(1)]they are geometrically similar to $J_\sigma$ and $|J_{\sigma\ast i}|/|J_\sigma|=s_{k+1,i}$;
\item[(2)] $J_{\sigma\ast i}^\circ\cap J_{\sigma\ast k}^\circ=\emptyset$ for every pair $1\leq i\neq k\leq n_{k+1}$.
\end{enumerate}
Inductively, $J_\sigma$ is well defined for all $\sigma\in\Omega^*$. We call $J_\sigma,\sigma\in\Omega_k$, \emph{cylinder sets of order $k$}. We define
\[
E:=\bigcap_{k=1}^\infty\bigcup_{\sigma\in\Omega_k}J_\sigma.
\]
We call the set $E$ a \emph{Moran set} associated with $J, (n_k)_{k=1}^\infty$ and $\big((c_{k,j})_{j=1}^{n_k}\big)_{k\geq 1}$.
\end{definition}

Moran sets are important objects in fractal geometry. In the past decades, this type of sets and the measures supported on them have been of great interest to mathematicians (cf. \cite{CM:92,HRWW:00,LW:05,Moran:46,wen:00}).

Note that $(E,d)$ is a compact doubling metric space: there exists some integer $H_0\geq 1$ such that for every $\epsilon>0$ and every ball $B(x,2\epsilon)\cap E$ in the sub-metric space $(E,d)$ can be covered by at most $H_0$ balls of radii $\epsilon$ in $(E,d)$. This can be seen by considering a maximal family of pairwise disjoint balls of radii $2^{-1}\epsilon$ which are centered in $B(x,2\epsilon)\cap E$ and estimating the volumes. Therefore, by \cite{VK:87} (see also \cite{KRS:11,wjm:98}), $E$ carries a doubling measure---a Borel measure  $\mu$ such that, for some constant $D\geq 1$,
\begin{equation}\label{dm}
0<\mu(B(x,2\epsilon))\leq D\mu(B(x,\epsilon))<\infty\;\;{\rm for\;all}\;\;x\in E\;{\rm and}\;\epsilon>0.
\end{equation}

From (\ref{dm}), we know that $E$ is the topological support of $\mu$, and since $E$ is bounded and $E\subset B(x,|E|)$ for every $x\in E$, we also have that $\mu(E)<\infty$. Thus, $E$ always carries a doubling probability measure. Next, let us make some remarks on the doubling measures $\mu$ on $E$.

First, by Proposition 4.9 of \cite{Fal:04}, if $\mu$ is an $s$-dimensional Ahlfors-David measure with ${\rm supp}(\mu)=E$, then $E$ is an $s$-set, that is, the $s$-dimensional Hausdorff measure of $E$ is both positive and finite. However, according to Theorem 1.1 of \cite{HRWW:00}, a Moran set $E$ is not necessarily an $s$-set even if (\ref{zhu1}) is assumed. Thus, $E$ may not support an Ahlfors-David measure, but as we mentioned above, it always supports a doubling probability measure.

Secondly, let $f_i,1\leq i\leq N$, be contractive similitudes on $\mathbb{R}^q$. By \cite{Hut:81}, there exists a unique non-empty compact set which satisfies $F=\bigcup_{i=1}^Nf_i(F)$. The set $F$ is called the self-similar set associated with $(f_i)_{i=1}^N$. We say that $(f_i)_{i=1}^N$ satisfies the open set condition (OSC), if there exists a non-empty bounded open set $U$ such that $\bigcup_{i=1}^Nf_i(U)\subset U$ and
$f_i(U)\cap f_j(U)=\emptyset$ for every pair $1\leq i\neq j\leq N$. With the assumption of the OSC, $F$ is a Moran set as defined above (cf. \cite{GL:95}). Now let $(p_i)_{i=1}^N$ be a probability vector. There exists a unique Borel probability measure $\nu$ which satisfies $\nu=\sum_{i=1}^Np_i\nu\circ f_i^{-1}$. The measure $\nu$ is called the self-similar measure associated with $(f_i)_{i=1}^N$ and $(p_i)_{i=1}^N$.

In \cite{Young:07}, with the assumption of the OSC, Young established a necessary and sufficient condition for a self-similar measure to be doubling on $F$. By Proposition 1.5 of \cite{Young:07}, one can see that a doubling measure $\nu$ carried by $F$ needs not to be an Ahlfors-David measure, although it is well known that under the OSC, $F$ is an $s$-set and the normalized $s$-dimensional Hausdorff measure $H^s(\cdot|E)$ is an $s$-dimensional Ahlfors-David measure. One may also see \cite{Wen:15} for characterizations for the doubling measures carried by some Moran sets.

Further, if $(f_i)_{i=1}^N$ satisfies the strong separation condition, namely, $f_i(F)$, $1\leq i\leq N$, are pairwise disjoint, then by Olsen \cite{Olsen:95}, we know that all self-similar measures on $F$ are doubling.

Now we are able to state our main result.
Let $\partial A$ denote the boundary (in $\mathbb{R}^q$) of a set $A\subset\mathbb{R}^q$. We further assume that there exists some constants $\delta>0$ and $k_0\in\mathbb{N}$ such that, for every $\sigma\in\Omega^*$, there exists some $\tau(\sigma)\in\Psi_{|\sigma|,|\tau(\sigma)|}$ with $|\tau(\sigma)|\leq k_0$ which satisfies
 \begin{equation}\label{A1}
J_{\sigma\ast\tau(\sigma)}\subset J_\sigma^\circ\;{\rm and}\;\;d(J_{\sigma\ast\tau(\sigma)}, \partial J_\sigma)\geq\delta|J_\sigma|.
\end{equation}
When $E$ is a self-similar set, the condition (\ref{A1}) is guaranteed by the OSC (cf. Proposition 3.4 of \cite{GL:95}). This condition will enable us to estimate the $\mu$-measure of the boundary of $J_\sigma$ for every $\sigma\in\Omega^*$. By the assumption $\underline{s}>0$, (\ref{A1}) and the construction of $E$, it is not difficult to see that (cf. \cite{HRWW:00})
\begin{eqnarray}\label{san5}
\overline{s}:=\sup_{k\geq 1}\max_{1\leq j\leq n_k}s_{k,j}<1;\;\;N_0:=\sup_{k\geq 1} n_k<\infty.
\end{eqnarray}

As the main result of the present paper, we will prove that, (\ref{GL}) holds for the doubling measures on $E$. That is,
\begin{theorem}\label{mthm}
Let $E$ be a Moran set satisfying (\ref{A1}) and $\mu$ a doubling probability measure satisfying (\ref{dm}). For each $n\geq 1$, let  $\alpha_n$ be an arbitrary element of $C_{n,r}(\mu)$ and $\{P_a(\alpha_n)\}_{a\in\alpha_n}$ an arbitrary VP with respect to $\alpha_n$. Then
\[
\underline{J}(\alpha_n,\mu),\;\overline{J}(\alpha_n,\mu)\asymp\frac{1}{n}e_{n,r}^r(\mu).
\]
\end{theorem}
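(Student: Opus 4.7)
The plan is to reduce the pointwise bounds on the Voronoi-cell integrals $I_a(\alpha_n,\mu)$ to two-sided estimates for the quantity $\mathcal{E}_r(J_\sigma):=\mu(J_\sigma)|J_\sigma|^r$ on a carefully chosen family of cylinder sets. The first step is to exploit (\ref{dm}) and (\ref{A1}), together with the uniform bound $|\tau(\sigma)|\leq k_0$, to obtain the comparability $\mu(J_{\sigma\ast\tau(\sigma)})\asymp\mu(J_\sigma)$ uniformly in $\sigma\in\Omega^*$. Combined with $\underline{s}\leq s_{k,j}\leq\overline{s}$, this makes $\mathcal{E}_r(J_{\sigma\ast i})\asymp\mathcal{E}_r(J_\sigma)$: the $\mathcal{E}_r$-value varies only by a bounded factor when one descends one level in the Moran tree. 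Using this slow variation, I would construct for every $n$ a finite antichain $\Gamma_n\subset\Omega^*$ such that $E\subset\bigcup_{\sigma\in\Gamma_n}J_\sigma$, the cylinders $J_\sigma$ are pairwise non-overlapping, $\mathrm{card}(\Gamma_n)\asymp n$, and $\mathcal{E}_r(J_\sigma)\asymp n^{-1}e_{n,r}^r(\mu)$ for every $\sigma\in\Gamma_n$. Along the way one also recovers the global comparison $e_{n,r}^r(\mu)\asymp\sum_{\sigma\in\Gamma_n}\mathcal{E}_r(J_\sigma)$, in analogy with the Ahlfors--David treatment of \cite{Zhu:20}.

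For the lower bound $\underline{J}(\alpha_n,\mu)\gtrsim n^{-1}e_{n,r}^r(\mu)$, I fix $a\in\alpha_n$ and exhibit some $\sigma$ (of appropriate level relative to $\Gamma_n$) whose protected core $J_{\sigma\ast\tau(\sigma)}$ lies inside $P_a(\alpha_n)$ at Euclidean distance $\asymp|J_\sigma|$ from $a$. The existence of such $\sigma$ rests on a pigeonhole/rearrangement argument combined with the optimality of $\alpha_n$: if the cores of all cylinders of the appropriate size close to $a$ were served by other optimal points, then replacing $a$ by a point in one such core would strictly decrease the total quantization error. Once $\sigma$ is produced, the built-in separation from $\partial J_\sigma$ provided by (\ref{A1}) yields
\[
I_a(\alpha_n,\mu)\geq\int_{J_{\sigma\ast\tau(\sigma)}}d(x,a)^r\,d\mu(x)\gtrsim|J_\sigma|^r\mu(J_{\sigma\ast\tau(\sigma)})\asymp\mathcal{E}_r(J_\sigma)\asymp n^{-1}e_{n,r}^r(\mu).
\]

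For the upper bound $\overline{J}(\alpha_n,\mu)\lesssim n^{-1}e_{n,r}^r(\mu)$, fix $a\in\alpha_n$ and set $\Gamma_n(a):=\{\sigma\in\Gamma_n:J_\sigma\cap P_a(\alpha_n)\neq\emptyset\}$. The optimality of $\alpha_n$ confines $a$ to within a bounded multiple of $|J_\sigma|$ of every $J_\sigma\in\Gamma_n(a)$, so $d(x,a)\lesssim|J_\sigma|$ for all $x\in J_\sigma\cap P_a(\alpha_n)$. Hence
\[
I_a(\alpha_n,\mu)\lesssim\sum_{\sigma\in\Gamma_n(a)}\mathcal{E}_r(J_\sigma)\lesssim\mathrm{card}(\Gamma_n(a))\cdot n^{-1}e_{n,r}^r(\mu),
\]
and the problem collapses to showing that $\mathrm{card}(\Gamma_n(a))$ is bounded by a universal constant.

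The main obstacle is exactly this overlap estimate, precisely the difficulty flagged in the introduction. A priori, cylinders of comparable $\mathcal{E}_r$-value need not have comparable diameters, and in $\mathbb{R}^q$ many of them can cluster near a single point. Here (\ref{A1}) is decisive: it supplies, uniformly in $\sigma\in\Gamma_n$, a protected inner region $J_{\sigma\ast\tau(\sigma)}$ of diameter $\asymp|J_\sigma|$ at Euclidean distance $\gtrsim\delta|J_\sigma|$ from $\partial J_\sigma$. This forces the cylinders $J_\sigma$ touching $P_a(\alpha_n)$ to harbour pairwise disjoint Euclidean balls of comparable radii, and a standard volume/packing argument in $\mathbb{R}^q$ in the spirit of the covering technique of Kesseb\"{o}hmer--Zhu \cite{KZ:15}, adapted to the present doubling setting, then caps $\mathrm{card}(\Gamma_n(a))$ by a constant depending only on $q$, $\underline{s}$, $\overline{s}$, $D$, $\delta$, and $k_0$, completing the proof.
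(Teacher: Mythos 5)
Your high-level plan---work on a maximal finite antichain of cylinders with pairwise comparable $\mathcal{E}_r$-values, cardinality $\asymp n$, establish $e^r_{n,r}(\mu)\asymp\sum_\sigma\mathcal{E}_r(\sigma)$, and cap $\mathrm{card}(\Gamma_n(a))$ by a volume/packing argument---matches the paper's use of $\Lambda_{k,r}$, Lemma~\ref{lem07}, and Lemma~\ref{l6}. But even at this stage there is a step you gloss over: comparable $\mathcal{E}_r$-values alone do \emph{not} force comparable diameters (the measure factor can compensate), so "disjoint balls of comparable radii" is not immediate from (\ref{A1}). The paper's Lemma~\ref{l4} uses the doubling property (\ref{dm}), not merely (\ref{A1}), to show that two cylinders in $\Lambda_{k,r}$ whose enlargements meet necessarily satisfy $s_\sigma\asymp s_\omega$ and $\mu(J_\sigma)\asymp\mu(J_\omega)$, and only then does the packing bound in Lemma~\ref{l6} go through. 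Your phrase "adapted to the present doubling setting" hides exactly this nontrivial point.

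The real gap is the lower bound. You propose to exhibit a $\sigma$ whose protected core $J_{\sigma\ast\tau(\sigma)}$ lies inside $P_a(\alpha_n)$ at distance $\asymp|J_\sigma|$ from $a$, via pigeonhole and optimality. This is not obtainable and, as stated, is false: $a$ may itself sit inside or next to the core, or $P_a(\alpha_n)$ may be a thin sliver of a core split among several neighbouring cells. The statement that "$P_a(\alpha_n)$ cannot be a thin sliver" is precisely the lower bound you are trying to prove, so the argument is circular. The paper's route is quite different. It first proves two-sided bounds $M_2\leq\kappa_\sigma\leq M_7$ on $\mathrm{card}(\alpha_n\cap(J_\sigma)_{s_\sigma/8})$ and $\sup_{x\in J_\sigma\cap E}d(x,\alpha_n)\leq s_\sigma/8$ by explicit swap/replacement arguments (Lemmas~\ref{l1}, \ref{lem06}, \ref{l12}); these in turn rest on several carefully chosen auxiliary integers $M_1,\dots,M_7$ and on Lemma~\ref{l3}, which \emph{is} the pigeonhole-on-protected-cores idea, but applied to $I_\sigma(\alpha,\mu)=\int_{J_\sigma}d(x,\alpha)^r\,d\mu$ rather than to a single Voronoi-cell integral $I_a$. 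It then bounds $T_a:=\mathrm{card}(\Gamma(a))\leq M_0M_7$ where $\Gamma(a)$ collects the optimal points near the cylinders that $P_a(\alpha_n)$ meets, invokes Theorem~4.1 of \cite{GL:00} to conclude that $\Gamma(a)$ is a $T_a$-optimal set for the conditional measure $\mu(\cdot\mid H(a))$, rescales to get a measure $\lambda_\sigma^*$ that is uniformly upper-regular (Lemma~\ref{lem08}), and finally applies Lemma~\ref{microapp} (Lemma~2.4 of \cite{Zhu:20}), which asserts a uniform positive lower bound on $\underline{J}(\gamma,\nu)$ for optimal $\gamma$ of bounded cardinality and uniformly upper-regular $\nu$. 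That uniform lower bound for the localized, rescaled problem---not a geometric "core inside the Voronoi cell" claim---is what actually closes the proof, and your proposal neither formulates nor proves any substitute for it.
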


The remaining part of the paper is organized as follows.
In section 2, we will establish some basic facts for the measure $\mu$ and some auxiliary measures. Using these facts, we define, in section 3, some auxiliary integers. In section 4, we use these integers to establish estimates for the number of optimal points lying in the suitably chosen neighborhoods of cylinders, which may intersect one another. Finally, based on the estimates in section 4, we apply \cite[Theorem 4.1]{GL:00} and some results in \cite{Zhu:20} to complete the proof of the theorem.
\section{Preliminary lemmas}

Let $\sigma=(\sigma(1),\ldots,\sigma(k))\in\Omega_k$, we define $|\sigma|=:k$. For $1\leq h\leq k$, we write
\[
\sigma|_h:=(\sigma(1),\ldots,\sigma(h)).
\]
If $\sigma\in\Omega_1$, we define $\sigma^-=\theta$; if $|\sigma|>1$, we define $\sigma^-:=\sigma|_{|\sigma|-1}$. For $\sigma\in\Omega^*$ and $\tau\in\Omega^*\cup\Omega_{\mathbb{N}}$, we write $\sigma\prec\tau$ if $\sigma=\tau|_{|\sigma|}$. We say that $\sigma, \tau$ are incomparable if  we have neither $\sigma\prec\tau$ nor $\tau\prec\sigma$. By the construction of $E$, for every pair $\sigma,\tau$ of incomparable words, we have
$J_\sigma^\circ\cap J_\tau^\circ=\emptyset$.

A subset $\Gamma$ of $\Omega^*$ is called an antichain if the words in $\Gamma$ are pairwise incomparable; $\Gamma$ is called a maximal finite antichain if it is a finite antichain and for every $\rho\in\Omega_{\mathbb{N}}$, there exists some $\sigma\in\Gamma$ such that $\sigma\prec\rho$.
Without loss of generality, in the following, we assume that $|J|=1$. Then
\[
|J_\sigma|=s_\sigma:=\prod_{h=1}^{|\sigma|}s_{h,\sigma(h)},\;\sigma=(\sigma(1),\ldots,\sigma(|\sigma|)).
\]
For every $\tau\in\Psi_{k,h}$, we define $s_\tau$ in the same manner as we did for words in $\Omega^*$. Let $x\in\mathbb{R}$, we denote by $[x]$ the largest integer not exceeding $x$. Next, we will establish some basic properties for the measure $\mu$.

\begin{lemma}\label{l0000}
There exists a constant $D_0>0$ such that
\begin{equation}\label{sgz6}
\mu(J_{\sigma\ast\tau(\sigma)})\geq D_0\mu(J_\sigma)\;\;{\rm for\;every}\;\;\sigma\in\Omega^*.
\end{equation}
\end{lemma}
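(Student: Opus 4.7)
The plan is to sandwich $\mu(J_\sigma)$ and $\mu(J_{\sigma\ast\tau(\sigma)})$ between the $\mu$-measures of two concentric balls centered at a single point $x\in E$, whose radii differ by a uniform constant factor, so that finitely many applications of the doubling inequality (\ref{dm}) compare the two ball measures and yield the claim.

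To locate such a point, I apply (\ref{A1}) to the word $\sigma\ast\tau(\sigma)\in\Omega^*$ itself: this produces some $\tau'\in\Psi_{|\sigma\ast\tau(\sigma)|,|\tau'|}$ with $|\tau'|\leq k_0$ satisfying $J_{\sigma\ast\tau(\sigma)\ast\tau'}\subset J_{\sigma\ast\tau(\sigma)}^\circ$ and $d(J_{\sigma\ast\tau(\sigma)\ast\tau'},\partial J_{\sigma\ast\tau(\sigma)})\geq\delta|J_{\sigma\ast\tau(\sigma)}|$. Pick any $x\in E\cap J_{\sigma\ast\tau(\sigma)\ast\tau'}$, which is non-empty by the nested-compact construction of $E$. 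Since $|\tau(\sigma)|\leq k_0$, $|J_{\sigma\ast\tau(\sigma)}|=s_{\tau(\sigma)}s_\sigma\geq\underline{s}^{k_0}s_\sigma$; setting $\eta:=\delta\underline{s}^{k_0}$ then gives $B(x,\eta s_\sigma)\subset J_{\sigma\ast\tau(\sigma)}$. On the other hand, $x\in J_\sigma$ and $|J_\sigma|=s_\sigma$ force $J_\sigma\subset B(x,s_\sigma)$.

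Let $m:=\lceil\log_2(1/\eta)\rceil$, which depends only on $\delta$, $\underline{s}$, and $k_0$. Iterating (\ref{dm}) at the point $x\in E$ then yields
\[\mu(J_\sigma)\leq\mu(B(x,s_\sigma))\leq\mu(B(x,2^m\eta s_\sigma))\leq D^m\mu(B(x,\eta s_\sigma))\leq D^m\mu(J_{\sigma\ast\tau(\sigma)}),\]
so one may take $D_0:=D^{-m}$. The only delicate point is that (\ref{dm}) is hypothesized only for centers $x\in E$, which is why one cannot simply use the center of an inscribed ball in $J_{\sigma\ast\tau(\sigma)}$; descending one further level in the cylinder tree via a second application of (\ref{A1}), and using that $E$ meets every cylinder, furnishes an $x\in E$ that is simultaneously uniformly deep inside $J_{\sigma\ast\tau(\sigma)}$ on the scale $s_\sigma$.
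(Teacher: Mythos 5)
Your proof is correct and takes essentially the same route as the paper's: pick a point of $E$ in a next-level cylinder $J_{\sigma\ast\tau(\sigma)\ast\tau'}$ (the paper writes $\tau'=\tau(\sigma\ast\tau(\sigma))$), sandwich $J_{\sigma\ast\tau(\sigma)}$ and $J_\sigma$ between two concentric closed balls whose radii differ by a constant factor independent of $\sigma$, and iterate the doubling inequality (\ref{dm}) a bounded number of times. The only cosmetic difference is that the paper inserts a harmless extra factor of $\tfrac12$ in the inner radius (using $t_0=2^{-1}\delta\underline{s}^{k_0}$ where you use $\eta=\delta\underline{s}^{k_0}$), which only changes the value of $D_0$.
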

\begin{proof}
Let $\sigma\in\Omega^*$ be given.
Fix an arbitrary $y_0\in J_{\sigma\ast\tau(\sigma)\ast\tau(\sigma\ast\tau(\sigma))}\cap E$. By (\ref{A1}), we have
$d(y_0,\partial J_{\sigma\ast\tau(\sigma)})\geq \delta|J_{\sigma\ast\tau(\sigma)}|\geq\delta s_{\tau(\sigma)}s_\sigma$. It follows that
\[
B(y_0,2^{-1}\delta \underline{s}^{k_0}s_\sigma)\subset B(y_0,2^{-1}\delta s_{\tau(\sigma)}s_\sigma)\subset J_{\sigma\ast\tau(\sigma)}.
\]
Let $l_0:=\min\{k:2^{k-1}>\delta^{-1} \underline{s}^{-k_0}\}$ and $t_0:=2^{-1}\delta \underline{s}^{k_0}$. Then $2^{l_0}t_0>1$. Hence,
\[
B(y_0,t_0s_\sigma)\subset J_{\sigma\ast\tau(\sigma)}\subset J_\sigma\subset B(y_0,2^{l_0}t_0s_\sigma).
\]
Thus, by (\ref{dm}), we obtain
\begin{eqnarray}\label{z2}
\mu(J_\sigma)\leq D^{l_0}\mu(B(y_0,t_0s_\sigma))\leq D^{l_0}\mu(J_{\sigma\ast\tau(\sigma)}).
\end{eqnarray}
Hence, the lemma  follows by defining $D_0:=D^{-l_0}$.

\end{proof}
\begin{lemma}\label{l100}
For every $\sigma\in\Omega^*$, we have $\mu(\partial J_\sigma)=0$. As a consequence, we have $\mu(J_\sigma\cap J_\omega)=0$ for every pair $\sigma,\omega$ of incomparable words in $\Omega^*$.
\end{lemma}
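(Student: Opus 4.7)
The plan is to establish $\mu(\partial J_\sigma)=0$ by iterating the buffer-sub-cylinder picture underlying Lemma \ref{l0000} through descendants of $\sigma$ and then invoking continuity of $\mu$ from above; the consequence for incomparable words will follow from a topological observation made along the way.

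First I would upgrade Lemma \ref{l0000} to the uniform bound
\[
\mu(\partial J_\tau)\leq\mu(J_\tau)-\mu(J_{\tau\ast\tau(\tau)})\leq(1-D_0)\mu(J_\tau)\qquad\text{for every }\tau\in\Omega^*,
\]
which is immediate since (\ref{A1}) places $J_{\tau\ast\tau(\tau)}$ inside $J_\tau^\circ$, hence disjoint from $\partial J_\tau$. For a given $k\geq 1$ I would then introduce
\[
A_k:=\{\omega\in\Psi_{|\sigma|,k}:J_{\sigma\ast\omega}\cap\partial J_\sigma\neq\emptyset\}
\]
and use the topological fact that $J_{\sigma\ast\omega}\cap\partial J_\sigma\subset\partial J_{\sigma\ast\omega}$ (an interior point of $J_{\sigma\ast\omega}$ has an $\mathbb{R}^q$-open neighborhood inside $J_\sigma$, so it lies in $J_\sigma^\circ$) to obtain
\[
\mu(\partial J_\sigma)\leq\sum_{\omega\in A_k}\mu(\partial J_{\sigma\ast\omega})\leq(1-D_0)\sum_{\omega\in A_k}\mu(J_{\sigma\ast\omega}).
\]
Rewriting the base bound as $D_0\mu(J_{\sigma\ast\omega})\leq\mu(J_{\sigma\ast\omega}^\circ)$ and invoking the pairwise disjointness of the cylinder interiors then dominates the right-hand side by $D_0^{-1}(1-D_0)\mu(U_k)$, where $U_k:=\bigcup_{\omega\in A_k}J_{\sigma\ast\omega}^\circ$.

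The main obstacle, and the only nontrivial point, is to show $\mu(U_k)\to 0$. I would verify that $U_{k+1}\subset U_k$ (each $\omega'\in A_{k+1}$ has its length-$k$ prefix in $A_k$, and $J_{\sigma\ast\omega'}^\circ\subset J_{\sigma\ast\omega'|_k}^\circ$) and that $\bigcap_{k}U_k=\emptyset$: any $x$ in this intersection would satisfy $x\in J_{\sigma\ast\omega_k}^\circ\subset J_\sigma^\circ$ for every $k$, while the bound $|J_{\sigma\ast\omega_k}|\leq\overline{s}^{\,k}|J_\sigma|\to 0$ from (\ref{san5}), applied to a choice $y_k\in J_{\sigma\ast\omega_k}\cap\partial J_\sigma$, would force $x=\lim_k y_k\in\partial J_\sigma$, contradicting $J_\sigma^\circ\cap\partial J_\sigma=\emptyset$. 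Continuity of $\mu$ from above then delivers $\mu(U_k)\to 0$ and hence $\mu(\partial J_\sigma)=0$. For the consequence, if $\sigma,\omega$ are incomparable then the same interior-versus-boundary observation gives $J_\sigma\cap J_\omega\subset\partial J_\sigma\cup\partial J_\omega$, which is $\mu$-null by what has just been established.
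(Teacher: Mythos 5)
Your proof is correct and proceeds by essentially the same idea as the paper's: iterate the buffer picture from Lemma \ref{l0000} --- the interior sub-cylinder $J_{\tau\ast\tau(\tau)}\subset J_\tau^\circ$ carries a $D_0$-fraction of $\mu(J_\tau)$ and is disjoint from $\partial J_\tau$ --- to force $\mu(\partial J_\sigma)$ down to zero, then deduce the incomparable-words statement from the disjointness of interiors. Where you diverge is in how the iteration is packaged. The paper peels off one generation of buffer sub-cylinders at a time and asserts the explicit geometric decay $\mu(\partial J_\sigma)\le(1-D_0)^k\mu(J_\sigma)$ by induction; you instead dominate $\mu(\partial J_\sigma)$ by $D_0^{-1}(1-D_0)\,\mu(U_k)$, where $U_k$ is a \emph{union of pairwise disjoint open interiors}, and send $\mu(U_k)\to0$ via nesting and continuity from above. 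Your version is, if anything, a little more robust: by passing to the disjoint interiors $J_{\sigma\ast\omega}^\circ$ before summing, you avoid any implicit appeal to $\sum_\omega\mu(J_{\sigma\ast\omega})\le\mu(J_\sigma)$ (which, taken literally, presupposes that the cylinder boundaries at each level are $\mu$-null --- the very thing being proved), whereas the paper's inductive step leans on that bookkeeping without comment. One small caveat applies to both your write-up and the paper's: the level-$k$ cover of $\partial J_\sigma\cap E$ is restricted to \emph{descendant} cylinders $J_{\sigma\ast\omega}$, $\omega\in\Psi_{|\sigma|,k}$. In principle a point of $\partial J_\sigma\cap E$ could be addressed in $E$ only through a neighboring cylinder $J_{\sigma'}$ with $\sigma'\neq\sigma$, $|\sigma'|=|\sigma|$, and thus lie in no $J_{\sigma\ast\omega}$. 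This is easily repaired --- enlarge $A_k$ to all words $\rho$ of length $|\sigma|+k$ with $J_\rho\cap\partial J_\sigma\neq\emptyset$; the topological inclusion $J_\rho\cap\partial J_\sigma\subset\partial J_\rho$, the disjointness of interiors, and the shrinking diameters all still hold, so the rest of your argument goes through verbatim --- but it is worth flagging since it affects the displayed cover.
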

\begin{proof}
Note that $J_{\sigma\ast\tau(\sigma)}\subset J_\sigma^\circ$. By Lemma \ref{l0000}, we obtain
\[
\mu(\partial J_\sigma)\leq\mu(J_\sigma)(1-D_0).
\]
Now for every $\omega=\Psi_{|\sigma|,|\tau(\sigma)|}\setminus\{\tau(\sigma)\}$, we apply (\ref{z2}) to $\sigma\ast\omega$ and get
\[
\mu(J_{\sigma\ast\omega\ast\tau(\sigma\ast\omega)})\geq D_0\mu(J_{\sigma\ast\omega}).
\]
Note that $J_{\sigma\ast\omega\ast\tau(\sigma\ast\omega)}\subset J_{\sigma\ast\omega}^\circ\subset J_\sigma^\circ$. We obtain
\[
\mu(\partial J_\sigma)\leq\mu(J_\sigma)(1-D_0)^2.
\]
By induction, we deduce that $\mu(\partial J_\sigma)\leq\mu(J_\sigma)(1-D_0)^k$ for all $k\geq 1$. Thus, we conclude that $\mu(\partial J_\sigma)=0$. For every pair of incomparable words $\sigma,\omega$, we know that $J_\sigma^\circ\cap J_\omega^\circ=\emptyset$. Hence, $\mu(J_\sigma\cap J_\omega)=0$.
\end{proof}
\begin{lemma}\label{l2}
There exists a number $p\in(0,1)$ such that
\begin{eqnarray}\label{z3}
p\mu(J_{\sigma})\leq\mu(J_{\sigma\ast i})\leq (1-p)\mu(J_\sigma).
\end{eqnarray}
for every $\sigma\in\Omega^*$ and $1\leq i\in n_{|\sigma|+1}$.
\end{lemma}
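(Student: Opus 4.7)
The plan is to get both bounds in a single stroke: first establish the lower bound, then derive the upper bound from it using that siblings partition the parent up to a $\mu$-null set.

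For the lower bound I would mimic the argument of Lemma \ref{l0000}, but applied one level down. Given $\sigma \in \Omega^*$ and $1\leq i\leq n_{|\sigma|+1}$, pick any $y_0 \in J_{\sigma\ast i\ast\tau(\sigma\ast i)}\cap E$. By (\ref{A1}) applied to $\sigma\ast i$, we have $d(y_0,\partial J_{\sigma\ast i})\geq \delta|J_{\sigma\ast i}|\geq \delta\underline{s}^{k_0+1}|J_\sigma|$, so that
\[
B\bigl(y_0,\tfrac{1}{2}\delta\underline{s}^{k_0+1}s_\sigma\bigr)\subset J_{\sigma\ast i},\qquad J_\sigma\subset B(y_0,s_\sigma).
\]
Choose $l_1\in\mathbb{N}$ with $2^{l_1}\cdot\tfrac{1}{2}\delta\underline{s}^{k_0+1}>1$; then iterating (\ref{dm}) gives $\mu(J_\sigma)\leq D^{l_1}\mu(B(y_0,\tfrac{1}{2}\delta\underline{s}^{k_0+1}s_\sigma))\leq D^{l_1}\mu(J_{\sigma\ast i})$. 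Setting $p_1:=D^{-l_1}$ yields $\mu(J_{\sigma\ast i})\geq p_1\mu(J_\sigma)$.

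For the upper bound I would invoke Lemma \ref{l100}. Since incomparable cylinders have a $\mu$-null intersection and $J_\sigma=\bigcup_{j=1}^{n_{|\sigma|+1}}J_{\sigma\ast j}$, additivity gives $\mu(J_\sigma)=\sum_{j=1}^{n_{|\sigma|+1}}\mu(J_{\sigma\ast j})$. Because (\ref{zhu1}) imposes $n_{|\sigma|+1}\geq 2$, there exists an index $j_0\neq i$, and applying the lower bound just obtained to $J_{\sigma\ast j_0}$,
\[
\mu(J_{\sigma\ast i})\leq \mu(J_\sigma)-\mu(J_{\sigma\ast j_0})\leq (1-p_1)\mu(J_\sigma).
\]
Taking $p:=p_1\in(0,1)$ delivers both halves of (\ref{z3}) simultaneously.

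There is no real obstacle here: the doubling argument is a direct transcription of the one in Lemma \ref{l0000}, and the upper bound is an immediate combinatorial consequence of the lower bound together with the null-boundary property of Lemma \ref{l100}. The only thing to be slightly careful about is that one must apply (\ref{A1}) to $\sigma\ast i$ (not to $\sigma$) so that the distinguished subcylinder sits inside $J_{\sigma\ast i}$, which forces the extra factor $\underline{s}^{k_0+1}$ rather than $\underline{s}^{k_0}$ and thus a slightly larger doubling exponent $l_1\geq l_0$; this costs nothing since the bound only needs to be uniform in $\sigma$ and $i$.
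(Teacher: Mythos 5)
Your argument is essentially the paper's: choose a point $y_0\in J_{\sigma\ast i\ast\tau(\sigma\ast i)}\cap E$, use (\ref{A1}) to inscribe a ball of radius comparable to $s_\sigma$ inside $J_{\sigma\ast i}$, iterate (\ref{dm}) a bounded number of times, and then deduce the upper bound from the lower bound together with $n_{|\sigma|+1}\geq 2$ and Lemma \ref{l100}. Two small slips in the write-up: (\ref{A1}) applied to $\sigma\ast i$ already gives $d(y_0,\partial J_{\sigma\ast i})\geq\delta|J_{\sigma\ast i}|\geq\delta\underline{s}\,s_\sigma$, so the factor $\underline{s}^{k_0+1}$ and the rationale you attach to it are spurious (the estimate depends only on $|J_{\sigma\ast i}|$, one level below $J_\sigma$, not on the diameter of the $\tau$-subcylinder where $y_0$ sits); and the set identity $J_\sigma=\bigcup_{j}J_{\sigma\ast j}$ is in general false for a Moran set since $\sum_j s_{k,j}\leq 1$ need not be an equality, but the inequality you actually use, $\mu(J_\sigma)\geq\mu(J_{\sigma\ast i})+\mu(J_{\sigma\ast j_0})$, follows directly from the inclusion $J_{\sigma\ast i}\cup J_{\sigma\ast j_0}\subset J_\sigma$ and the null-intersection statement of Lemma \ref{l100}, so the conclusion stands.
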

\begin{proof}
Let $\sigma\in\Omega^*$ and $1\leq i\leq n_{|\sigma|+1}$. Let $x_0$ be an arbitrary point of $J_{\sigma\ast i\ast\tau(\sigma\ast i)}\cap E$. Note that $|\tau(\sigma\ast i)|\leq k_0$. Using (\ref{A1}), we deduce
\[
d(x_0,\partial J_{\sigma\ast i})\geq\delta|J_{\sigma\ast i}|=\delta s_{\sigma\ast i}\geq\underline{s}\delta s_\sigma.
\]
We write $\xi :=\delta\underline{s}$. Then we have
\[
B(x_0,2^{-1}\xi s_\sigma)\subset J_{\sigma\ast i}\subset J_\sigma\subset B(x_0,s_\sigma).
\]
Let $k_1:=\min\{k\in\mathbb{N}:\xi ^{-1}<2^{k-1}\}$. Then by (\ref{dm}), we deduce
\begin{eqnarray}\label{z1}
\mu(J_\sigma)\leq\mu(B(x_0,s_\sigma))\leq D^{k_1}\mu(B(x_0,2^{-1}\xi s_\sigma))\leq D^{k_1}\mu(J_{\sigma\ast i}).
\end{eqnarray}
We define $p:=D^{-k_1}$. Then the first inequality in (\ref{z3}) is fulfilled. By our assumption, we have
$n_{|\sigma|+1}\geq 2$; thus, by Lemma \ref{l100}, we obtain the second inequality in (\ref{z3}). This completes the proof of the lemma.
\end{proof}

For every $\sigma\in\Omega^*$ and $\alpha\subset\mathbb{R}^q$, we write
\[
\mathcal{E}_r(\sigma):=\mu(J_\sigma)s_\sigma^r,\;\;I_\sigma(\alpha,\mu):=\int_{J_\sigma}d(x,\alpha)^rd\mu(x).
\]

Our next lemma connects the quantity $\mathcal{E}_r(\sigma)$  with some integrals over $J_\sigma$. It will be used to establish estimates for the quantization error for $\mu$.
\begin{lemma}\label{l3}
Let $H$ be an integer with $H\geq 2$. Let $\zeta>0,k_2:=[\frac{\log H}{\log 2}]+1$ and $C_{1,H}:=D_0p^{k_2}(\delta\underline{s}^{k_2})^r$. Let $\alpha$ be a subset of $(J_\sigma)_{\zeta|J_\sigma|}$ with ${\rm card}(\alpha)=H$.  Then
\begin{eqnarray*}
C_{1,H}\mathcal{E}_r(\sigma)\leq I_\sigma(\alpha,\mu)\leq (1+\zeta)^r\mathcal{E}_r(\sigma).
\end{eqnarray*}
\end{lemma}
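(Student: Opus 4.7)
The proof splits into an upper bound and a lower bound, with the latter being the substantive part.

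For the upper bound, the assumption $\alpha\subset (J_\sigma)_{\zeta|J_\sigma|}$ means every $a\in\alpha$ lies within distance $\zeta|J_\sigma|=\zeta s_\sigma$ of $J_\sigma$. Fixing any such $a$ and using the triangle inequality, for every $x\in J_\sigma$ one has $d(x,a)\le |J_\sigma|+\zeta|J_\sigma|=(1+\zeta)s_\sigma$. Hence $d(x,\alpha)^r\le (1+\zeta)^r s_\sigma^r$, and integrating over $J_\sigma$ against $\mu$ gives $I_\sigma(\alpha,\mu)\le(1+\zeta)^r\mathcal{E}_r(\sigma)$.

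The lower bound proceeds by a pigeonhole plus a ``deep core'' construction. The choice of $k_2=[\log H/\log 2]+1$ gives $2^{k_2}>H$, and since $n_j\ge 2$ for every $j$, the family $\{J_{\sigma\ast\omega}:\omega\in\Psi_{|\sigma|,k_2}\}$ has at least $2^{k_2}>H$ members with pairwise disjoint interiors. Each point of $\alpha$ therefore lies in at most one interior $J_{\sigma\ast\omega}^\circ$, so by pigeonhole there is some $\omega_0\in\Psi_{|\sigma|,k_2}$ with $\alpha\cap J_{\sigma\ast\omega_0}^\circ=\emptyset$. Applying (\ref{A1}) to $\sigma\ast\omega_0$ produces the deep core $J_\eta$ with $\eta:=\sigma\ast\omega_0\ast\tau(\sigma\ast\omega_0)$, which lies in $J_{\sigma\ast\omega_0}^\circ$ and satisfies $d(J_\eta,\partial J_{\sigma\ast\omega_0})\ge\delta|J_{\sigma\ast\omega_0}|\ge\delta\underline{s}^{k_2}s_\sigma$. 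Since every $a\in\alpha$ lies outside the open set $J_{\sigma\ast\omega_0}^\circ$, any segment from $a$ to a point of $J_\eta$ must cross $\partial J_{\sigma\ast\omega_0}$; hence $d(a,J_\eta)\ge\delta\underline{s}^{k_2}s_\sigma$ for every $a\in\alpha$, giving $d(x,\alpha)\ge\delta\underline{s}^{k_2}s_\sigma$ for all $x\in J_\eta$.

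It remains to lower-bound $\mu(J_\eta)$. Iterating the first inequality of Lemma \ref{l2} exactly $k_2$ times along the path $\sigma\ast\omega_0$ yields $\mu(J_{\sigma\ast\omega_0})\ge p^{k_2}\mu(J_\sigma)$, and then Lemma \ref{l0000} applied to $\sigma\ast\omega_0$ gives $\mu(J_\eta)\ge D_0\mu(J_{\sigma\ast\omega_0})\ge D_0 p^{k_2}\mu(J_\sigma)$. Combining,
\[
I_\sigma(\alpha,\mu)\ge\int_{J_\eta}d(x,\alpha)^r\,d\mu(x)\ge (\delta\underline{s}^{k_2}s_\sigma)^r\cdot D_0 p^{k_2}\mu(J_\sigma)=C_{1,H}\mathcal{E}_r(\sigma),
\]
as required.

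The only conceptually subtle point is the pigeonhole step: one must work with interiors rather than closures, because $\alpha$-points may sit on boundaries shared by several touching cylinders. Since the $J_{\sigma\ast\omega}^\circ$ are genuinely disjoint, each of the $H$ points of $\alpha$ lies in at most one such interior, and the count $2^{k_2}>H$ forces an empty one. Everything else is bookkeeping combining the geometric separation coming from (\ref{A1}) with the measure lower bounds from Lemmas \ref{l0000} and \ref{l2}.
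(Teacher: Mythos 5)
Your proof is correct and follows essentially the same route as the paper's: the upper bound is the same triangle-inequality estimate, and the lower bound is the same pigeonhole argument producing $\omega$ with $J_{\sigma\ast\omega}^\circ\cap\alpha=\emptyset$ followed by the separation bound from (\ref{A1}) and the measure bounds from Lemmas \ref{l0000} and \ref{l2} on the deep core $J_{\sigma\ast\omega\ast\tau(\sigma\ast\omega)}$. Your explicit remark that one must pigeonhole on interiors (not closures) is a point the paper leaves implicit, and is a genuine clarification.
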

\begin{proof}
By the hypothesis, for every $x\in J_\sigma$, we have
\[
d(x,\alpha)\leq (1+\zeta)|J_\sigma|=(1+\zeta)s_\sigma.
 \]
 It follows that $I_\sigma(\alpha,\mu)\leq(1+\zeta)^r\mathcal{E}_r(\sigma)$. It remains to give an estimate in the reverse direction. Note that
 ${\rm card}(\Psi_{|\sigma|,k_2})\geq 2^{k_2}>L$. There exists some $\omega\in\Phi_{|\sigma|,k_2}$ such that $J_{\sigma\ast\omega}^\circ\cap\alpha=\emptyset$. Hence, by (\ref{A1}), we obtain
\begin{equation}\label{z4}
\inf_{x\in J_{\sigma\ast\omega\ast\tau(\sigma\ast\omega})}d(x,\alpha)\geq\delta|J_{\sigma\ast\omega}|=\delta s_\omega|J_\sigma|\geq\delta\underline{s}^{k_2}s_\sigma.
\end{equation}
On the other hand, by Lemmas \ref{l0000} and \ref{l2}, we have
\begin{equation}\label{z5}
\mu(J_{\sigma\ast\omega\ast\tau(\sigma\ast\omega}))\geq D_0\mu(J_{\sigma\ast\omega})\geq D_0p^{k_2}\mu(J_\sigma).
\end{equation}
By using (\ref{z4}) and (\ref{z5}), we deduce
\begin{eqnarray*}
I_\sigma(\alpha,\mu)&\geq&I_{\sigma\ast\omega\ast\tau(\sigma\ast\omega)}(\alpha,\mu)\geq D_0p^{k_2}\mu(J_\sigma)(\delta\underline{s}^{k_2}s_\sigma)^r=C_{1,H}\mathcal{E}_r(\sigma).
\end{eqnarray*}
This completes the proof of the lemma.
\end{proof}

 Let $\eta_r:=\min\{p\underline{s}^r,8^{-r}\}$. By Lemma \ref{l2}, we know that
 \[
 \mathcal{E}_r(\sigma)\leq (1-p)\overline{s}^r\mathcal{E}_r(\sigma^-)<\mathcal{E}_r(\sigma^-).
 \]
 This allows us to define the following finite maximal antichain in $\Omega^*$:
\begin{eqnarray}\label{lambdakr}
\Lambda_{k,r}:=\big\{\sigma\in\Omega^*:\mathcal{E}_r(\sigma^-)\geq\eta_r^k>\mathcal{E}_r(\sigma)\big\};\;\phi_{k,r}:={\rm card(\Lambda_{k,r})}.
\end{eqnarray}
\begin{remark}\label{rem02}
We have $\phi_{k,r}\leq\phi_{k+1,r}\leq N_1\phi_{k,r}$, where
\[
H_0:=\bigg[\frac{\log\eta_r}{\log(1-p)\overline{s}^r}\bigg]+1,\;\;N_1:=N_0^{H_0}.
\]
This can be seen as follows. For every $\sigma\in\Lambda_{k,r}$, by Lemma \ref{l2} and (\ref{lambdakr}),
 \[
 \eta_r^k>\mathcal{E}_r(\sigma)\geq\mathcal{E}_r(\sigma^-)\eta_r=\eta_r^{k+1}.
 \]
 Note that $n_k\geq 2$ for all $k\geq 1$. We deduce that $\phi_{k,r}\leq\phi_{k+1,r}$. For every $\sigma\in\Lambda_{k,r}$ and $\omega\in\Psi_{|\sigma|,H_0}$, again, by Lemma \ref{l2} and (\ref{lambdakr}), we have
\begin{eqnarray*}\label{san6}
\mathcal{E}_r(\sigma\ast\omega)\leq\mathcal{E}_r(\sigma)\big((1-p)\overline{s}^r\big)^{H_0}<\eta_r^{k+1}.
\end{eqnarray*}
This and (\ref{san5}) implies that $\phi_{k+1,r}\leq N_0^{H_0}=N_1\phi_{k,r}$.
\end{remark}

For a set $F\subset\mathbb{R}^q$ and $\zeta>0$, we write $(F)_\zeta$ for the closed $\zeta$-neighborhood of $F$. For every $\sigma\in\Lambda_{k,r}$, we define
\begin{eqnarray}\label{sgzhu1}
\mathcal{A}_\sigma:=\big\{\omega\in\Lambda_{k,r}:(J_\sigma)_{\frac{s_\sigma}{4}}\cap (J_\omega)_{\frac{s_\omega}{4}}\neq\emptyset\big\}, \;M_\sigma={\rm card}(\mathcal{A}_\sigma).
\end{eqnarray}
One can see that $\omega\in\mathcal{A_\sigma}$ if and only if $\sigma\in\mathcal{A_\omega}$.

 \begin{lemma}\label{l4}
There exists constants $C_2$ and $C_3$ such that for every pair $\sigma,\omega\in\Lambda_{k,r}$ with $\omega\in\mathcal{A}_\sigma$, we have
\begin{equation}\label{zhusanguo2}
C_2s_\sigma\leq s_\omega<C_2^{-1}s_\sigma;\;\;C_3\mu(J_\omega)\leq\mu(J_\sigma)\leq C_3^{-1}\mu(J_\omega).
\end{equation}
\end{lemma}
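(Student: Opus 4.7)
The strategy is to reduce both conclusions of (\ref{zhusanguo2}) to the single estimate $s_\sigma \asymp s_\omega$. Indeed, for every $\sigma \in \Lambda_{k,r}$, the definition of $\Lambda_{k,r}$ together with Remark~\ref{rem02} gives $\eta_r^{k+1} \leq \mathcal{E}_r(\sigma) < \eta_r^k$, and the same bounds hold for $\omega$, so the ratio $\mathcal{E}_r(\sigma)/\mathcal{E}_r(\omega)$ lies in $(\eta_r, \eta_r^{-1})$ uniformly. Since $\mathcal{E}_r(\sigma) = \mu(J_\sigma) s_\sigma^r$, once the diameter comparability is in hand the measure comparability in (\ref{zhusanguo2}) is a one-line algebra step.

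To prove $s_\sigma \asymp s_\omega$, I would assume without loss of generality that $s_\omega \leq s_\sigma$ and seek $s_\omega \geq c\, s_\sigma$ with $c > 0$ an absolute constant. Reusing the argument in the proof of Lemma~\ref{l0000}, I pick an anchor $y_0 \in J_{\sigma \ast \tau(\sigma)} \cap E$ satisfying $B(y_0, t_0 s_\sigma) \subset J_\sigma$, where $t_0 := 2^{-1}\delta\underline{s}^{k_0}$. The intersection hypothesis $(J_\sigma)_{s_\sigma/4} \cap (J_\omega)_{s_\omega/4} \neq \emptyset$ together with a routine triangle inequality (exploiting $s_\omega \leq s_\sigma$) yields $J_\omega \subset B(y_0, \kappa\, s_\sigma)$ for some absolute constant $\kappa$.

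I then iterate the doubling property (\ref{dm}) a fixed number of times: choosing an integer $L$ so that $2^L t_0 \geq \kappa$, I obtain $\mu(J_\omega) \leq \mu(B(y_0, \kappa s_\sigma)) \leq D^L \mu(B(y_0, t_0 s_\sigma)) \leq D^L \mu(J_\sigma)$. Combined with $\mathcal{E}_r(\omega)/\mathcal{E}_r(\sigma) \geq \eta_r$, this rearranges to
\[
\frac{s_\omega^r}{s_\sigma^r} \;=\; \frac{\mathcal{E}_r(\omega)}{\mathcal{E}_r(\sigma)} \cdot \frac{\mu(J_\sigma)}{\mu(J_\omega)} \;\geq\; \eta_r D^{-L},
\]
giving the desired lower bound $s_\omega \geq (\eta_r D^{-L})^{1/r} s_\sigma$. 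The opposite case $s_\omega > s_\sigma$ is symmetric, since the text notes just before the lemma that $\omega \in \mathcal{A}_\sigma$ if and only if $\sigma \in \mathcal{A}_\omega$. The only real care needed is to verify that both the radius ratio $\kappa/t_0$ and the iteration count $L$ are absolute constants depending on $\delta, \underline{s}, k_0, D, r$ but independent of $\sigma, \omega, k$; there is no conceptual obstacle because Lemma~\ref{l0000} produces an interior ball in $J_\sigma$ at the right scale and the doubling hypothesis (\ref{dm}) then controls the transfer from $J_\omega$ back to $J_\sigma$.
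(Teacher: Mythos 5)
Your proposal is correct and follows essentially the same line as the paper: pick an anchor point $x_0$ in $J_{\sigma\ast\tau(\sigma)}\cap E$, use (\ref{A1}) to get a ball of fixed relative radius inside $J_\sigma$, use the intersection hypothesis and the triangle inequality to enclose $J_\omega$ in a ball of fixed relative radius around $x_0$, iterate the doubling inequality a bounded number of times to get $\mu(J_\omega)\leq D^{L}\mu(J_\sigma)$, and then feed this into $\mathcal{E}_r(\omega)\geq\eta_r\mathcal{E}_r(\sigma)$ to extract $s_\omega\gtrsim s_\sigma$; the reverse inequality and the measure comparability then follow by the symmetry $\omega\in\mathcal{A}_\sigma\Leftrightarrow\sigma\in\mathcal{A}_\omega$ and one line of algebra. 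The paper's proof is the same modulo bookkeeping: it reduces to the subcase $s_\omega<\tfrac{1}{8}s_\sigma$ so that the outer radius can be taken as $\tfrac{45}{32}s_\sigma$, and it uses the slightly larger inner radius $\tfrac{1}{2}\delta s_\sigma$ available directly from (\ref{A1}) rather than the smaller $t_0=2^{-1}\delta\underline{s}^{k_0}$ that your reuse of Lemma~\ref{l0000} produces; both choices yield a bounded iteration count, so this affects only the value of the constant $C_2$, not the validity of the argument.
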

\begin{proof}
Let $\sigma\in\Lambda_{k,r}$ and $\omega\in\mathcal{A}_\sigma$.  It suffices to show that there exists a constant $C$ such that whenever $s_\omega<\frac{1}{8}s_\sigma$, we have $s_\omega\geq Cs_\sigma$. Assume that $s_\omega<\frac{1}{8}s_\sigma$. Let $x_0$ be an arbitrary point in $J_{\sigma\ast\tau(\sigma)}\cap E$. Then by (\ref{A1}), we have
\[
B(x_0,\frac{1}{2}\delta s_\sigma)\subset J_\sigma;\;J_\sigma\cup J_\omega\subset B(x_0,\frac{45}{32}s_\sigma).
\]
Let $k_3:=\min\{k:2^k>45/(16\delta)\}$. By (\ref{dm}), we deduce
\begin{eqnarray*}
\mu(J_\omega)\leq\mu\big(B(x_0,\frac{45}{32}s_\sigma)\big)\leq D^{k_3}\mu(B(x_0,\frac{1}{2}\delta s_\sigma))\leq D^{k_3}\mu(J_\sigma).
\end{eqnarray*}
Now by (\ref{lambdakr}), we know that $\mathcal{E}_r(\omega)\geq\eta_r\mathcal{E}_r(\sigma)$. It follows that
\[
\frac{s_\omega^r}{s_\sigma^r}\geq\eta_r\frac{\mu(J_\sigma)}{\mu(J_\omega)}\geq\frac{\eta_r}{D^{k_3}}.
\]
The first part of (\ref{zhusanguo2}) follows by defining $C_2:=(\eta_r/D^{k_3})^{1/r}$. To see the second, we define $C_3:=\eta_r C_2^r$. Then by the first part of (\ref{zhusanguo2}) and (\ref{lambdakr}), we have
\[
\frac{\mu(J_\sigma)}{\mu(J_\omega)}\geq\eta_r\frac{s_\omega^r}{s_\sigma^r}\geq\eta_rC_2^r.
\]
This completes the proof of the lemma.
\end{proof}

With the above preparations, we are now able to establish an upper bound for the numbers $M_\sigma,\sigma\in\Lambda_{k,r}$ as defined in (\ref{sgzhu1}).
\begin{lemma}\label{l6}
 There exists a constant $M_0$ such that $\max\limits_{\sigma\in\Lambda_{k,r}}M_\sigma\leq M_0$.
\end{lemma}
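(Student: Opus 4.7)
The plan is to bound $M_\sigma$ by a volume-packing argument, using disjointness of the $J_\omega^\circ$ for $\omega \in \Lambda_{k,r}$, the comparability estimates of Lemma \ref{l4}, the fact that $\mu(\partial J_\omega)=0$ from Lemma \ref{l100}, and the doubling property \eqref{dm}.

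First, I would fix $\sigma \in \Lambda_{k,r}$ and pick an anchor point $x_0 \in J_{\sigma \ast \tau(\sigma)} \cap E$. By \eqref{A1} and $|\tau(\sigma)| \leq k_0$, I get a small ball around $x_0$ sitting inside $J_\sigma$: namely $B(x_0, \delta \underline{s}^{k_0} s_\sigma) \subset J_{\sigma\ast\tau(\sigma)} \subset J_\sigma$.  Next, for any $\omega \in \mathcal{A}_\sigma$, I want to show $J_\omega \subset B(x_0, R s_\sigma)$ for some uniform $R$.  By definition of $\mathcal{A}_\sigma$, the neighborhoods $(J_\sigma)_{s_\sigma/4}$ and $(J_\omega)_{s_\omega/4}$ intersect, so $d(J_\sigma, J_\omega) \leq (s_\sigma + s_\omega)/4$; combining with $|J_\sigma| = s_\sigma$, $|J_\omega| = s_\omega$, and the bound $s_\omega \leq C_2^{-1} s_\sigma$ from Lemma \ref{l4}, any $y \in J_\omega$ satisfies $d(x_0, y) \leq s_\sigma + (s_\sigma + s_\omega)/4 + s_\omega \leq R s_\sigma$ where $R := 5(1 + C_2^{-1})/4$.

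Second, I would exploit the disjointness of the $J_\omega^\circ$. Since $\Lambda_{k,r}$ is an antichain, the words $\omega \in \mathcal{A}_\sigma$ are pairwise incomparable, hence the interiors $J_\omega^\circ$ are pairwise disjoint.  Lemma \ref{l100} gives $\mu(J_\omega) = \mu(J_\omega^\circ)$, and hence
\begin{equation*}
\sum_{\omega \in \mathcal{A}_\sigma} \mu(J_\omega) \;=\; \mu\Bigl(\bigsqcup_{\omega \in \mathcal{A}_\sigma} J_\omega^\circ\Bigr) \;\leq\; \mu(B(x_0, R s_\sigma)).
\end{equation*}
On the other hand, Lemma \ref{l4} yields $\mu(J_\omega) \geq C_3 \mu(J_\sigma)$ for every $\omega \in \mathcal{A}_\sigma$, so the left-hand side is at least $M_\sigma \cdot C_3 \mu(J_\sigma)$.

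Third, I would control the right-hand side by doubling. Choosing an integer $k_4$ with $2^{k_4} \delta \underline{s}^{k_0} \geq R$, iterating \eqref{dm} from $B(x_0, \delta \underline{s}^{k_0} s_\sigma)$ up to $B(x_0, R s_\sigma)$ gives $\mu(B(x_0, R s_\sigma)) \leq D^{k_4} \mu(B(x_0, \delta \underline{s}^{k_0} s_\sigma)) \leq D^{k_4} \mu(J_\sigma)$. Combining the two inequalities yields $M_\sigma \leq D^{k_4}/C_3 =: M_0$, which is the desired uniform bound.

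I do not expect a serious obstacle in this argument — the real content was already placed in Lemmas \ref{l100} and \ref{l4}, which give respectively the vanishing of boundary $\mu$-measure (so that the disjointness of interiors is enough to sum up $\mu$-measures) and the comparability of $s_\omega$ with $s_\sigma$ and of $\mu(J_\omega)$ with $\mu(J_\sigma)$. The only point that requires minor care is bookkeeping: making sure the constant $R$ used to enclose the union and the constant $\delta \underline{s}^{k_0}$ used to fit a ball inside $J_\sigma$ are chosen to be truly independent of $\sigma$ and $k$, which they are, since both depend only on the data $\delta, \underline{s}, k_0, C_2$.
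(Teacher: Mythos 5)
Your argument is correct, but it takes a genuinely different route from the paper: you pack $\mu$-mass, whereas the paper packs Euclidean volume. The paper's proof fixes $x_\sigma\in J_\sigma\cap E$ and, for each $\omega\in\mathcal{A}_\sigma$, a point $x_\omega\in J_{\omega\ast\tau(\omega)}\cap E$; condition (\ref{A1}) together with the first half of Lemma \ref{l4} ($s_\omega\geq C_2 s_\sigma$) shows that the Euclidean balls $B(x_\omega,2^{-1}C_2\delta s_\sigma)$ sit inside the disjoint interiors $J_\omega^\circ$, while every $J_\omega$ is contained in $B\big(x_\sigma,\frac{5}{4}(1+C_2^{-1})s_\sigma\big)$; comparing $q$-dimensional Lebesgue volumes then yields $M_0=(5(1+C_2^{-1}))^q(2C_2\delta)^{-q}$. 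Your version instead sums $\mu(J_\omega)$ over $\omega\in\mathcal{A}_\sigma$: you use Lemma \ref{l100} to pass from disjointness of interiors to additivity of $\mu$, the second half of Lemma \ref{l4} ($\mu(J_\omega)\geq C_3\mu(J_\sigma)$) to lower-bound each summand, and the doubling inequality (\ref{dm}) to dominate $\mu(B(x_0,Rs_\sigma))$ by $D^{k_4}\mu(J_\sigma)$. Both routes reach a uniform bound, but the paper's is lighter: it needs only (\ref{A1}), the diameter half of Lemma \ref{l4}, and the ambient dimension $q$, whereas yours additionally invokes Lemma \ref{l100}, the measure half of Lemma \ref{l4}, and the doubling constant $D$. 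Your route has the offsetting virtue of being intrinsic to $\mu$ and would carry over to settings where Lebesgue volume in $\mathbb{R}^q$ is not a useful comparison.

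One small slip, immaterial to the conclusion: the claimed containment $B(x_0,\delta\underline{s}^{k_0}s_\sigma)\subset J_{\sigma\ast\tau(\sigma)}$ is not justified, since (\ref{A1}) bounds $d(J_{\sigma\ast\tau(\sigma)},\partial J_\sigma)$ from below but says nothing about $d(x_0,\partial J_{\sigma\ast\tau(\sigma)})$, and $x_0$ may lie on the boundary of $J_{\sigma\ast\tau(\sigma)}$. What you actually use downstream is only $B(x_0,\delta\underline{s}^{k_0}s_\sigma)\subset J_\sigma$, which does hold because (\ref{A1}) gives $d(x_0,\partial J_\sigma)\geq\delta s_\sigma\geq\delta\underline{s}^{k_0}s_\sigma$; alternatively, pick $x_0$ one level deeper, in $J_{\sigma\ast\tau(\sigma)\ast\tau(\sigma\ast\tau(\sigma))}\cap E$, as is done in the proof of Lemma \ref{l0000}.
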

\begin{proof}
For every $\omega\in\mathcal{A}_\sigma$, we fix an arbitrary $x_\omega\in J_{\omega\ast\tau(\omega)}\cap E$ and an arbitrary $x_\sigma\in J_\sigma\cap E$. By (\ref{sgzhu1}) and Lemma \ref{l4}, we have
\begin{eqnarray*}
B(x_\omega,2^{-1}C_2\delta s_\sigma)\subset B(x_\omega,2^{-1}\delta s_\omega)\subset J_\omega^\circ\subset J_\omega\subset B(x_\sigma,(\frac{5}{4}(1+C_2^{-1}))s_\sigma).
\end{eqnarray*}
Since the words in $\mathcal{A}_\sigma$ are pairwise incomparable, the balls $B(x_\omega,2^{-1}C_2\delta s_\sigma)$, $\omega\in\mathcal{A}_\sigma$, are mutually disjoint. Hence, by estimating the volumes, we obtain
\begin{eqnarray*}
M_\sigma(2^{-1}C_2\delta s_\sigma)^q\leq\big(\frac{5}{4}(1+C_2^{-1})s_\sigma\big)^q.
\end{eqnarray*}
By defining $M_0:=(5(1+C_2^{-1}))^q(2C_2\delta)^{-q}$, the lemma follows.
\end{proof}
\begin{remark}
The boundedness of the set $\{M_\sigma:\sigma\in\Lambda_{k,r},k\geq 1\}$ will be very crucial for us to establish a characterization for the optimal sets. Unfortunately, without the doubling property, we are unable to obtain this boundedness even for self-similar measures with the assumption of the OSC.
\end{remark}

Next, we define some auxiliary measures which are image measures of the conditional measures of $\mu$ on cylinder sets $J_\sigma$. On one hand, these auxiliary measures will allow us to extract the crucial quantity $\mathcal{E}_r(\sigma)$; on the other hand, as we will see, they share some basic properties which will be very helpful for the characterizations for the optimal sets.

For every $\sigma\in\Omega^*$, let $g_\sigma$ be an arbitrary similitude with similarity ratio $s_\sigma$. Let $\mu(\cdot|J_\sigma)$ denote the conditional probability measure of $\mu$ on $J_\sigma$. We define
\begin{equation}\label{sgzhu2}
\nu_\sigma:=\mu(\cdot|J_\sigma)\circ g_\sigma,\;K_\sigma:={\rm supp}(\nu_\sigma).
\end{equation}
Then one can see that $K_\sigma\subset g_\sigma^{-1}(J_\sigma)$ and $|K_\sigma|\leq 1$. We have
\begin{lemma}\label{lem01}
There exist constants $C_4$ and $t$ such that, for every $\sigma\in\Omega^*$ and $\epsilon>0$, we have $\sup\limits_{x\in\mathbb{R}^q}\nu_\sigma(B(x,\epsilon))\leq C_4\epsilon^t$.
\end{lemma}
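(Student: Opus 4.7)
The plan is to translate the bound on $\nu_\sigma$ back into a bound on $\mu$ of a small ball inside $J_\sigma$, partition $J_\sigma$ into descendants at a scale tied to $\epsilon$, count how many of them meet the relevant ball by a volume argument, and control each piece by the geometric decay of $\mu$ down the cylinder tree coming from Lemma \ref{l2}. Setting $y := g_\sigma(x)$, the definition (\ref{sgzhu2}) together with the fact that $g_\sigma$ is a similitude of ratio $s_\sigma$ gives $\nu_\sigma(B(x,\epsilon)) = \mu(B(y, s_\sigma\epsilon)\cap J_\sigma)/\mu(J_\sigma)$; the case $\epsilon \geq 1$ is absorbed into the constant, so the real task is $\epsilon \in (0,1)$.

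I would then introduce the scale-$\epsilon$ antichain
\[
\Gamma_\epsilon^\sigma := \{\tau \in \Omega^* : \sigma \prec \tau,\ s_\tau \leq s_\sigma\epsilon < s_{\tau^-}\},
\]
each of whose words satisfies $s_\sigma\epsilon\,\underline{s} < s_\tau \leq s_\sigma\epsilon$. Since cylinder diameters decay to zero along every branch and boundaries of cylinders are $\mu$-null by Lemma \ref{l100}, the family $\{J_\tau\}_{\tau \in \Gamma_\epsilon^\sigma}$ covers $E \cap J_\sigma$ up to a $\mu$-null set, and because $\mu$ is supported on $E$,
\[
\mu(B(y, s_\sigma\epsilon)\cap J_\sigma) \leq \sum_{\tau \in \Gamma_\epsilon^\sigma,\ J_\tau \cap B(y, s_\sigma\epsilon)\neq\emptyset} \mu(J_\tau).
\]

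Two independent estimates then finish things off. To count the terms, I use (\ref{A1}) to place inside each $J_\tau$ of $\Gamma_\epsilon^\sigma$ a ball $B(p_\tau, \delta s_\tau)$ of radius $\geq \delta\underline{s}s_\sigma\epsilon$; these have pairwise disjoint interiors and sit inside $B(y, 3s_\sigma\epsilon)$, so a volume comparison in $\mathbb{R}^q$ bounds the number of contributing $\tau$'s by a constant $N_2 = (3/(\delta\underline{s}))^q$. To size each term, iterating the upper inequality of Lemma \ref{l2} gives $\mu(J_\tau) \leq (1-p)^{|\tau|-|\sigma|}\mu(J_\sigma)$, while $\underline{s}^{|\tau|-|\sigma|}\leq s_\tau/s_\sigma \leq \epsilon$ forces $|\tau|-|\sigma| \geq \log\epsilon/\log\underline{s}$; with $t := \log(1-p)/\log\underline{s} > 0$ this yields $\mu(J_\tau) \leq \epsilon^t\mu(J_\sigma)$. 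Multiplying and summing produces $\mu(B(y, s_\sigma\epsilon)\cap J_\sigma) \leq N_2\epsilon^t\mu(J_\sigma)$, which gives the lemma with $C_4 := \max\{1, N_2\}$. The main points demanding care are the covering claim and the uniform volume count; the geometric decay estimate is direct bookkeeping on top of Lemma \ref{l2}.
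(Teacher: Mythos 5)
Your proof is correct and supplies, in self-contained form, exactly the Graf--Luschgy style argument that the paper invokes by citation after recording (\ref{zsg1})--(\ref{zsg2}): rescaling reduces to a bound on $\mu\big(B(y,s_\sigma\epsilon)\cap J_\sigma\big)/\mu(J_\sigma)$, a scale-$\epsilon$ antichain of descendants of $\sigma$ together with the interior balls supplied by (\ref{A1}) gives a bounded packing count, and iterating Lemma \ref{l2} yields $\mu(J_\tau)\leq\epsilon^{t}\mu(J_\sigma)$ with precisely the exponent $t=\log(1-p)/\log\underline{s}$ that the paper also fixes. The one step that deserves an extra line is the assertion that $\{J_\tau\}_{\tau\in\Gamma_\epsilon^\sigma}$ covers $E\cap J_\sigma$ up to a $\mu$-null set: besides the decay of cylinder diameters, this uses that $\mu$-a.e.\ point of $E$ lying in $J_\sigma$ actually has a code through $\sigma$, which follows from Lemma \ref{l100} applied to all words of length $|\sigma|$ (not just to descendants of $\sigma$), so the conclusion is fine but the justification as written is slightly terse.
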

\begin{proof}
Let $t:=\frac{\log(1-p)}{\log\underline{s}}$. By Lemmas \ref{l2}, (\ref{sgzhu2}) and (\ref{zhu1}), we have
\begin{eqnarray}\label{zsg1}
&&p\nu_\sigma(g_\sigma^{-1}(J_{\sigma\ast\tau^-}))\leq\nu_\sigma(g_\sigma^{-1}(J_{\sigma\ast\tau}))\leq (1-p)\nu_\sigma(g_\sigma^{-1}(J_{\sigma\ast\tau^-}));\\
&&\underline{s}|g_\sigma^{-1}(J_{\sigma\ast\tau^-})|\leq|g_\sigma^{-1}(J_{\sigma\ast\tau})|\leq \overline{s}|g_\sigma^{-1}(J_{\sigma\ast\tau^-})|.\label{zsg2}
\end{eqnarray}
The lemma can be proved by using (\ref{zsg1}), (\ref{zsg2}) and the same argument as that in the proof for \cite[Proposition 5.1]{GL:04}.
\end{proof}
\begin{remark}\label{zhusanguo01}
Let $L_0:=49^q$. Then $(J_\sigma)_{\frac{s_\sigma}{4}}$ can be covered by $L_0$ closed balls of radii $\frac{s_\sigma}{16}$ which are centered in $(J_\sigma)_{\frac{|s_\sigma}{4}}$. In fact, we may consider the largest number $L$ of closed balls of radii $\frac{s_\sigma}{32}$ which are centered in $(J_\sigma)_{\frac{|s_\sigma}{4}}$, and then double the radii and obtain a cover for $(J_\sigma)_{\frac{|s_\sigma}{4}}$. By estimating the volumes, one can see that $L\leq L_0$.
\end{remark}
\begin{remark}\label{zhusanguo1}
Let $\alpha\subset\mathbb{R}^q$ be a nonempty finite set and $\sigma\in\Lambda_{k,r}$. Let $B_\sigma$ denote the set of the centers of some $L_0$ closed balls of radii $\frac{s_\sigma}{16}$ which are centered in $(J_\sigma)_{\frac{s_\sigma}{4}}$ and cover $(J_\sigma)_{\frac{s_\sigma}{4}}$. For $\omega\in\Lambda_{k,r}\setminus\mathcal{A}_\sigma$, we have $(J_\omega)_{\frac{s_\omega}{4}}\cap(J_\sigma)_{\frac{s_\sigma}{4}}=\emptyset$. Thus, by triangle inequality, one can see that  for every $x\in J_\omega$, we have
\[
d(x,(\alpha\setminus(J_\sigma)_{\frac{s_\sigma}{8}})\cup B_\sigma))\leq d(x,\alpha).
\]
Thus, if we replace $\alpha$ with $(\alpha\setminus(J_\sigma)_{\frac{s_\sigma}{8}})\cup B_\sigma)$, only the points in $\bigcup_{\omega\in\mathcal{A}_\sigma}J_\omega$ might be affected unfavorably.
\end{remark}

For every $\sigma\in\Lambda_{k,r}$, let $\mathcal{A}_\sigma$ be as defined in (\ref{sgzhu1}) and $\alpha\subset\mathbb{R}^q$. Motivated by Remark \ref{zhusanguo1}, we define
\[
A_\sigma^*:=\bigcup_{\omega\in\mathcal{A}_\sigma}J_\omega;\;\;I_\sigma^*(\alpha,\mu):=\int_{A_\sigma^*}d(x,\alpha)^rd\mu(x);\;
\mathcal{E}_r^*(\sigma):=\mu(A_\sigma^*)|A_\sigma^*|^r.
\]
\begin{lemma}\label{san1}
There exists a constant $D_1$ such that, for every $\sigma\in\Lambda_{k,r}$, the following holds:
\[
D_1\mathcal{E}_r(\sigma)\leq\mathcal{E}_r^*(\sigma)\leq D_1^{-1}\mathcal{E}_r(\sigma).
\]
\end{lemma}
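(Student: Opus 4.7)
The plan is to prove the two inequalities separately, with the lower bound being nearly immediate and the upper bound requiring the geometric and measure-theoretic control supplied by Lemmas \ref{l4} and \ref{l6}.

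For the lower bound, I would first observe that $\sigma$ itself lies in $\mathcal{A}_\sigma$, since the defining condition $(J_\sigma)_{s_\sigma/4}\cap (J_\sigma)_{s_\sigma/4}\neq\emptyset$ is trivially satisfied. Consequently $J_\sigma\subset A_\sigma^*$, which gives $\mu(A_\sigma^*)\geq\mu(J_\sigma)$ and $|A_\sigma^*|\geq|J_\sigma|=s_\sigma$. Multiplying these yields $\mathcal{E}_r^*(\sigma)\geq\mathcal{E}_r(\sigma)$, which is the lower half with constant $1$.

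For the upper bound, I would first bound the diameter of $A_\sigma^*$. Take $x,y\in A_\sigma^*$, so $x\in J_{\omega_1}$ and $y\in J_{\omega_2}$ for some $\omega_1,\omega_2\in\mathcal{A}_\sigma$. From the definition of $\mathcal{A}_\sigma$, one can find points connecting $J_{\omega_i}$ to $J_\sigma$ through the $s_{\omega_i}/4$ and $s_\sigma/4$ neighborhoods, so a triangle inequality argument gives
\[
d(x,y)\leq s_{\omega_1}+\tfrac{s_{\omega_1}}{4}+\tfrac{s_\sigma}{4}+s_\sigma+\tfrac{s_\sigma}{4}+\tfrac{s_{\omega_2}}{4}+s_{\omega_2}.
\]
By Lemma \ref{l4}, $s_{\omega_i}\leq C_2^{-1}s_\sigma$, so $|A_\sigma^*|\leq \bigl(\tfrac{3}{2}+\tfrac{5}{2}C_2^{-1}\bigr)s_\sigma$.

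Next I would control $\mu(A_\sigma^*)$. Since the elements of $\mathcal{A}_\sigma\subset\Lambda_{k,r}$ form an antichain, they are pairwise incomparable, and Lemma \ref{l100} guarantees $\mu(J_\omega\cap J_{\omega'})=0$ for distinct $\omega,\omega'\in\mathcal{A}_\sigma$. Therefore
\[
\mu(A_\sigma^*)=\sum_{\omega\in\mathcal{A}_\sigma}\mu(J_\omega)\leq M_\sigma\cdot C_3^{-1}\mu(J_\sigma)\leq M_0 C_3^{-1}\mu(J_\sigma),
\]
using Lemma \ref{l4} for each summand and Lemma \ref{l6} for the cardinality. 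Combining the diameter and measure bounds gives
\[
\mathcal{E}_r^*(\sigma)\leq M_0C_3^{-1}\bigl(\tfrac{3}{2}+\tfrac{5}{2}C_2^{-1}\bigr)^r\mathcal{E}_r(\sigma),
\]
and setting $D_1$ to be the minimum of $1$ and the reciprocal of this constant completes the proof. The only place where anything nontrivial is used is the combined appeal to Lemmas \ref{l4} and \ref{l6}, both of which have already been established, so I expect no serious obstacle here; the lemma is essentially a packaging of those two estimates.
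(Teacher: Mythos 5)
Your proof is correct and follows essentially the same route as the paper: bound $|A_\sigma^*|$ via Lemma \ref{l4}, bound $\mu(A_\sigma^*)$ via Lemmas \ref{l100}, \ref{l4}, and \ref{l6}, and note that $\sigma\in\mathcal{A}_\sigma$ gives the lower bound for free. You simply spell out the triangle-inequality and disjointness computations that the paper leaves implicit, obtaining a marginally sharper diameter constant, but the argument and the ingredients are identical.
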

\begin{proof}
By Lemmas \ref{l4}-\ref{l6}, we have
\[
|A_\sigma^*|\leq \frac{5}{2}(1+C_2^{-1})s_\sigma,\;\;\mu(A_\sigma^*)\leq M_0C_3^{-1}\mu(J_\sigma)
\]
We define $D_1:=M_0^{-1}C_3(\frac{5}{2}(1+C_2^{-1}))^{-r}$. The lemma follows.
\end{proof}
Let $h_\sigma$ be an arbitrary similitude with similarity ratio $|A_\sigma^*|$. We define
\begin{equation}\label{zhu3}
\nu_\sigma^*:=\mu(\cdot|A_\sigma^*)\circ h_\sigma,\;\;K_\sigma^*:={\rm supp}(\nu_\sigma^*).
\end{equation}
\begin{lemma}\label{lemma1}
There exists a constant $C_5$ such that, for every $\sigma\in\Lambda_{k,r}$ and $\epsilon>0$, we have $\sup_{x\in\mathbb{R^q}}\nu_\sigma^*(B(x,\epsilon))\leq C_5\epsilon^t$.
\end{lemma}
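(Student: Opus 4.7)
The plan is to transfer the tail bound of Lemma~\ref{lem01} from a single cylinder to the bounded union $A_\sigma^*$ by writing $A_\sigma^*$ as the union of at most $M_0$ cylinder sets $J_\omega$, all of comparable diameter and measure to $J_\sigma$, and then applying Lemma~\ref{lem01} to each piece.

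First I would translate the statement to the original space: since $h_\sigma$ is a similitude with ratio $|A_\sigma^*|$, we have
\[
\nu_\sigma^*(B(x,\epsilon))=\frac{\mu\bigl(B(h_\sigma(x),\,|A_\sigma^*|\epsilon)\cap A_\sigma^*\bigr)}{\mu(A_\sigma^*)}.
\]
Setting $y:=h_\sigma(x)$ and $\eta:=|A_\sigma^*|\epsilon$, and using Lemma~\ref{l100} to neglect overlaps of incomparable cylinders, it suffices to estimate $\sum_{\omega\in\mathcal{A}_\sigma}\mu(B(y,\eta)\cap J_\omega)$ and divide by $\mu(A_\sigma^*)$.

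Next, for each $\omega\in\mathcal{A}_\sigma$ I would apply Lemma~\ref{lem01} to $\nu_\omega$: since $g_\omega$ has ratio $s_\omega$, the pullback $g_\omega^{-1}(B(y,\eta))$ is a ball of radius $\eta/s_\omega$, so
\[
\mu(B(y,\eta)\cap J_\omega)=\mu(J_\omega)\,\nu_\omega\bigl(g_\omega^{-1}(B(y,\eta))\bigr)\leq C_4\,\mu(J_\omega)\,(\eta/s_\omega)^t.
\]
By Lemma~\ref{l4} I have $s_\omega\geq C_2 s_\sigma$ and $\mu(J_\omega)\leq C_3^{-1}\mu(J_\sigma)$, while $|A_\sigma^*|\leq \tfrac{5}{2}(1+C_2^{-1})s_\sigma$ from the proof of Lemma~\ref{san1}; hence $(\eta/s_\omega)^t\leq \bigl(\tfrac{5(1+C_2^{-1})}{2C_2}\bigr)^t\epsilon^t$ uniformly in $\omega$ and $\sigma$.

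Finally I would sum over $\mathcal{A}_\sigma$, using Lemma~\ref{l6} to bound the number of summands by $M_0$, and use the trivial bound $\mu(A_\sigma^*)\geq\mu(J_\sigma)$ (since $\sigma\in\mathcal{A}_\sigma$) to cancel the $\mu(J_\sigma)$ factor, yielding
\[
\nu_\sigma^*(B(x,\epsilon))\leq M_0 C_3^{-1} C_4 \bigl(\tfrac{5(1+C_2^{-1})}{2C_2}\bigr)^t\epsilon^t,
\]
so we may set $C_5$ equal to this constant. There is no real obstacle here; the only thing to be careful about is keeping the two similarity ratios ($s_\omega$ for $g_\omega$ and $|A_\sigma^*|$ for $h_\sigma$) straight, and using the uniform comparability from Lemma~\ref{l4} so that the resulting bound does not depend on $\sigma$.
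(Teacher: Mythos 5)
Your proof is correct, and it follows the same overall route as the paper: translate to a ball in the original space, decompose $A_\sigma^*$ into the cylinders $J_\omega$, $\omega\in\mathcal{A}_\sigma$, pull back to $\nu_\omega$, and then apply Lemma~\ref{lem01} using Lemma~\ref{l4} to control the ratio $|A_\sigma^*|/s_\omega$. The one technical difference is in how the normalization $1/\mu(A_\sigma^*)$ is handled. You discard all but the $\mu(J_\sigma)$ summand in the denominator and then sum the numerator over at most $M_0$ terms (invoking Lemma~\ref{l6}) with the measure-comparability bound $\mu(J_\omega)\leq C_3^{-1}\mu(J_\sigma)$ from Lemma~\ref{l4}. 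The paper instead uses the mediant inequality
\[
\frac{\sum_{\tau\in\mathcal{A}_\sigma}\mu\bigl(B(y,\eta)\cap J_\tau\bigr)}{\sum_{\tau\in\mathcal{A}_\sigma}\mu(J_\tau)}\;\leq\;\max_{\tau\in\mathcal{A}_\sigma}\frac{\mu\bigl(B(y,\eta)\cap J_\tau\bigr)}{\mu(J_\tau)},
\]
which reduces directly to a single $\nu_\tau$-estimate. That trick sidesteps both the cardinality bound $M_0$ and the measure comparability constant $C_3$, giving a cleaner $C_5=C_4\widetilde{C}_4^t$; your argument yields a looser constant (with factors $M_0C_3^{-1}$) but is equally valid. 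If you want to streamline the write-up, the mediant inequality is worth keeping in mind as it avoids two auxiliary lemmas.
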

\begin{proof}
Let $x\in \mathbb{R}^q$ and $\epsilon>0$. Using (\ref{sgzhu2}), (\ref{zhu3}) and Lemma \ref{l100}, we deduce
\begin{eqnarray}\label{zhu4}
\nu_\sigma^*(B(x,\epsilon))&=&\frac{1}{\mu(A_\sigma^*)}\mu(B(h_\sigma(x),|A_\sigma^*|\epsilon)\cap A_\sigma^*))\nonumber\\
&=&\frac{\sum_{\tau\in\mathcal{A}_\sigma}\mu(B(h_\sigma(x),|A_\sigma^*|\epsilon)\cap J_\tau)}{\sum_{\tau\in\mathcal{A}_\sigma}\mu(J_\tau)}\nonumber\\
&\leq&\max_{\tau\in\mathcal{A}_\sigma}\frac{\mu(B(h_\sigma(x),|A_\sigma^*|\epsilon)\cap J_\tau)}{\mu(J_\tau)}\nonumber\\
&=&\max_{\tau\in\mathcal{A}_\sigma}\nu_\tau\circ g_\tau^{-1}(B(h_\sigma(x),|A_\sigma^*|\epsilon))\nonumber\\
&=&\max_{\tau\in\mathcal{A}_\sigma}\nu_\tau(B(g_\tau^{-1}\circ h_\sigma(x),s_\tau^{-1}|A_\sigma^*|\epsilon)).
\end{eqnarray}
By Lemma \ref{l4} and (\ref{sgzhu1}), for every $\tau\in\mathcal{A}_\sigma$, we have
 \begin{equation}\label{zhu5}
 |A_\sigma^*|/s_\tau\leq 4^{-1}5(1+C_2^{-1}+C_2^{-2})=\widetilde{C}_4.
 \end{equation}
Let $C_5:=C_4\widetilde{C}_4^t$. By (\ref{zhu4}), (\ref{zhu5}) and Lemma \ref{lem01}, we obtain
\[
\nu_\sigma^*(B(x,\epsilon))\leq C_4\widetilde{C}_4^t\epsilon^t=C_5\epsilon^t.
\]
This completes the proof of the lemma.
\end{proof}

\section{Auxiliary integers}
First, we select three integers $M_i,1\leq i\leq 3$, which will be used to establish a lower bound for the number of optimal points lying in $(J_\sigma)_{\frac{s_\sigma}{8}}$. The following Lemmas \ref{l7}-\ref{l9} are devoted to this goal.

\begin{lemma}\label{l7}
Let $\alpha\subset\mathbb{R}^q$ and $\sigma\in\Lambda_{k,r}$. Assume that there exists some point $x_0$ in  $J_\sigma\cap E$ such that $d(x_0,\alpha)>8^{-1}s_\sigma$. Then for $k_4=[-\frac{\log(16)}{\log\overline{s}}]+1$ and $C_6:=p^{k_4}(16)^{-r}$,  we have
\[
I_\sigma(\alpha,\mu)\geq C_6\mathcal{E}_r(\sigma).
\]
\end{lemma}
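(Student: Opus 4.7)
The plan is to locate a deep sub-cylinder of $J_\sigma$ which contains $x_0$, is small enough in diameter that every point in it remains far from $\alpha$, and still carries a definite fraction of $\mu(J_\sigma)$. The lower bound on $I_\sigma(\alpha,\mu)$ then reduces to integrating the constant lower bound on $d(\cdot,\alpha)^r$ over this sub-cylinder.

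First, since $x_0\in J_\sigma\cap E$, there is some $\rho\in\Omega_{\mathbb{N}}$ with $\sigma\prec\rho$ and $x_0\in J_{\rho|_h}$ for every $h\geq|\sigma|$. Let $\omega\in\Psi_{|\sigma|,k_4}$ be the unique word with $\sigma\ast\omega=\rho|_{|\sigma|+k_4}$, so that $x_0\in J_{\sigma\ast\omega}$. By the choice $k_4=[-\log 16/\log\overline{s}]+1$ and (\ref{san5}) we have $\overline{s}^{k_4}<1/16$, hence
\[
|J_{\sigma\ast\omega}|=s_\omega s_\sigma\leq\overline{s}^{k_4}s_\sigma<\tfrac{1}{16}s_\sigma.
\]

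Secondly, for every $x\in J_{\sigma\ast\omega}$, the triangle inequality and $d(x_0,\alpha)>s_\sigma/8$ give
\[
d(x,\alpha)\geq d(x_0,\alpha)-d(x,x_0)>\tfrac{1}{8}s_\sigma-|J_{\sigma\ast\omega}|>\tfrac{1}{16}s_\sigma.
\]
On the mass side, iterating the first inequality in (\ref{z3}) of Lemma \ref{l2} exactly $k_4$ times along the chain $\sigma,\sigma\ast\omega|_1,\ldots,\sigma\ast\omega$ yields $\mu(J_{\sigma\ast\omega})\geq p^{k_4}\mu(J_\sigma)$.

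Combining the two estimates,
\[
I_\sigma(\alpha,\mu)\geq\int_{J_{\sigma\ast\omega}}d(x,\alpha)^r\,d\mu(x)\geq\bigl(\tfrac{1}{16}s_\sigma\bigr)^r p^{k_4}\mu(J_\sigma)=C_6\,\mathcal{E}_r(\sigma),
\]
which is the desired inequality. There is no real obstacle here: the lemma is a direct packaging of the two ingredients already established (the geometric shrinking from $\overline{s}<1$ and the mass comparability from Lemma \ref{l2}); the only point requiring a moment's care is making sure $x_0\in E$ actually sits inside a cylinder of depth $k_4$ beneath $\sigma$, which is automatic from the definition of $E$ as the limit of the nested cover.
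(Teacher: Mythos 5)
Your argument is correct and matches the paper's proof in all essential respects: both select a depth-$k_4$ sub-cylinder of $J_\sigma$ containing $x_0$, use $\overline{s}^{k_4}<1/16$ to keep its diameter below $\tfrac{1}{16}s_\sigma$ so that the triangle inequality preserves $d(\cdot,\alpha)\geq\tfrac{1}{16}s_\sigma$ on it, and iterate Lemma \ref{l2} to get $\mu(J_{\sigma\ast\omega})\geq p^{k_4}\mu(J_\sigma)$. The only cosmetic difference is that the paper routes the final integral through the rescaled measure $\nu_\sigma$ while you integrate directly against $\mu$ over $J_{\sigma\ast\omega}$; the content is the same.
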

\begin{proof}
By the hypothesis, for every $y\in B(x_0,\frac{1}{16}s_\sigma)$, we have
$d(y,\alpha)\geq \frac{1}{16}s_\sigma$. Note that for every $\tau\in\Psi_{|\sigma|,k_4}$, we have
\[
|J_{\sigma\ast\tau}|\leq\overline{s}^{k_4}s_\sigma<\frac{1}{16}s_\sigma.
\]
Therefore, there exists some $\tau_1\in\Phi_{|\sigma|,k_4}$ such that
$x_0\in J_{\sigma\ast\tau_1}\subset B(x_0,\frac{1}{16}s_\sigma)$.
Hence, for every $y\in J_{\sigma\ast\tau_1}$, we have $d(y,\alpha)\geq \frac{1}{16}s_\sigma$. It follows that
\[
d(g_\sigma^{-1}(J_{\sigma\ast\tau_1}),g_\sigma^{-1}(\alpha))\geq \frac{1}{16}.
\]
Using this and Lemma \ref{l2}, we deduce
\begin{eqnarray*}
I_\sigma(\alpha,\mu)&=&\mathcal{E}_r(\sigma)\int d(x,g_\sigma^{-1}(\alpha))^rd\nu_\sigma(x)\\&\geq&\mathcal{E}_r(\sigma)\int_{g_\sigma^{-1}(J_{\sigma\ast\tau_1})} d(x,g_\sigma^{-1}(\alpha))^rd\nu_\sigma(x)\\&\geq& p^{k_4}(16)^{-r}\mathcal{E}_r(\sigma).
\end{eqnarray*}
This completes the proof of the lemma.
\end{proof}

The next lemma is an easy consequence of the definition of the quantization errors and some covering techniques.

\begin{lemma}\label{lem02}(see \cite[Lemma 2.2]{Zhu:20})
Let $\nu$ be a Borel probability measure on $\mathbb{R}^q$ with compact support $K_\nu$. Then for every $\eta>0$, there exists an integer $M(\eta)$ depending only on $\eta$ and $q$, such that $l\geq M(\eta)$ implies
\[
e^r_{l,r}(\mu)\leq(\eta|K_\nu|)^r.
\]
\end{lemma}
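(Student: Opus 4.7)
The plan is to prove Lemma \ref{lem02} by a direct covering argument, using the compactness of $K_\nu$ together with the monotonicity of $l\mapsto e_{l,r}(\nu)$.

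First I would observe that $K_\nu$, having diameter $|K_\nu|$, is contained in a single closed ball of radius $|K_\nu|$ (centered at any point of $K_\nu$). The key geometric input is the standard Euclidean covering bound: any closed ball of radius $R$ in $\mathbb{R}^q$ can be covered by at most $M(\eta)$ closed balls of radius $\eta R$, where $M(\eta)$ depends only on $\eta$ and $q$. A convenient way to produce such a cover is to take a maximal $\eta R$-separated subset of the enclosing ball and then double the radii of the corresponding balls; a volume comparison then yields $M(\eta)\leq C_q\eta^{-q}$ for a dimensional constant $C_q$. Crucially, $M(\eta)$ is scale-invariant and therefore does not depend on $|K_\nu|$ or on $\nu$.

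Next I would let $\alpha$ be the set of centers of a cover of the enclosing ball by $M(\eta)$ balls of radius $\eta|K_\nu|$. Then ${\rm card}(\alpha)\leq M(\eta)$ and $d(x,\alpha)\leq\eta|K_\nu|$ for every $x\in K_\nu$; since $\nu$ is a probability measure supported on $K_\nu$,
\[
\int d(x,\alpha)^r\,d\nu(x)\leq(\eta|K_\nu|)^r.
\]
By the definition of the quantization error (\ref{error}) this gives $e_{M(\eta),r}^r(\nu)\leq(\eta|K_\nu|)^r$, and the trivial monotonicity $e_{l+1,r}(\nu)\leq e_{l,r}(\nu)$ (obtained by discarding a codepoint) promotes the bound to every $l\geq M(\eta)$.

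I do not anticipate any real obstacle; the entire content of the lemma is the dimensional covering number bound, and the quantization error estimate follows by plugging the cover into the defining infimum. The only point that deserves care is ensuring that $M(\eta)$ is chosen independent of the particular measure $\nu$ and of the scale $|K_\nu|$, which the scale invariance of Euclidean covering numbers supplies automatically.
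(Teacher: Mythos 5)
Your proof is correct and is precisely the standard covering-number argument that the cited reference (\cite[Lemma 2.2]{Zhu:20}) uses; the paper itself states this lemma without proof. Enclosing $K_\nu$ in a ball of radius $|K_\nu|$, covering by $M(\eta)\lesssim_q\eta^{-q}$ balls of radius $\eta|K_\nu|$ via a maximal-separated-set-and-volume argument, plugging the centers into the defining infimum, and then invoking monotonicity of $l\mapsto e_{l,r}(\nu)$ is exactly the intended route, and you correctly flag the scale invariance of the covering number as the one point needing care.
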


\begin{lemma}\label{l8}
There exists a smallest integer $M_1$ such that, for every $\sigma\in\Lambda_{k,r}$,
\[
e^r_{l,r}(\nu_\sigma), \;e^r_{l,r}(\nu_\sigma^*)<D_1C_6\;\;{\rm for\; every}\; l\geq M_1.
\]
In particular, for every $\gamma\in C_{l,r}(\nu_\sigma^*)$, we have
\[
I_\sigma^*(h_\sigma(\gamma),\mu)<C_6\mathcal{E}_r(\sigma).
\]
\end{lemma}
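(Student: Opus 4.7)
The plan is to invoke Lemma \ref{lem02} uniformly, exploiting the fact that both $K_\sigma$ and $K_\sigma^*$ have diameter at most $1$ (since $g_\sigma$ has ratio $s_\sigma$, $K_\sigma\subset g_\sigma^{-1}(J_\sigma)$, and $h_\sigma$ has ratio $|A_\sigma^*|$, $K_\sigma^*\subset h_\sigma^{-1}(A_\sigma^*)$). Concretely, I would choose $\eta>0$ so small that $\eta^r<D_1 C_6$ and set $M_1:=M(\eta)$ with $M(\eta)$ the integer produced by Lemma \ref{lem02}. Since $|K_\sigma|,|K_\sigma^*|\leq 1$, Lemma \ref{lem02} applied to $\nu_\sigma$ and to $\nu_\sigma^*$ yields, for every $l\geq M_1$ and every $\sigma\in\Omega^*$,
\[
e^r_{l,r}(\nu_\sigma)\leq(\eta|K_\sigma|)^r\leq\eta^r<D_1C_6,\qquad e^r_{l,r}(\nu_\sigma^*)\leq(\eta|K_\sigma^*|)^r\leq\eta^r<D_1C_6,
\]
which is the first assertion with a single integer $M_1$ independent of $\sigma$.

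For the ``in particular'' part, I would use the scaling relation between $\nu_\sigma^*$ and $\mu$ restricted to $A_\sigma^*$. Since $h_\sigma$ is a similitude with ratio $|A_\sigma^*|$ and $\nu_\sigma^*=\mu(\cdot|A_\sigma^*)\circ h_\sigma$, a direct change of variables gives, for any finite $\gamma\subset\mathbb{R}^q$,
\[
\int d(x,\gamma)^r\,d\nu_\sigma^*(x)=\frac{1}{\mu(A_\sigma^*)\,|A_\sigma^*|^r}\int_{A_\sigma^*}d(y,h_\sigma(\gamma))^r\,d\mu(y)=\frac{I_\sigma^*(h_\sigma(\gamma),\mu)}{\mathcal{E}_r^*(\sigma)}.
\]
Taking $\gamma\in C_{l,r}(\nu_\sigma^*)$ and using the first part together with Lemma \ref{san1}, this becomes
\[
I_\sigma^*(h_\sigma(\gamma),\mu)=e^r_{l,r}(\nu_\sigma^*)\,\mathcal{E}_r^*(\sigma)<D_1C_6\cdot D_1^{-1}\mathcal{E}_r(\sigma)=C_6\mathcal{E}_r(\sigma),
\]
which is the desired inequality.

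There is no real obstacle here: the whole argument is a uniformity statement that rides on the fact that $\nu_\sigma$ and $\nu_\sigma^*$ have been normalized so that their supports have diameter $\leq 1$, so the quantization error with $l$ points can be made arbitrarily small uniformly in $\sigma$ by Lemma \ref{lem02}. The only detail worth being careful about is the scaling identity relating $e^r_{l,r}(\nu_\sigma^*)$ to $I_\sigma^*(h_\sigma(\gamma),\mu)$, and the final step where Lemma \ref{san1} converts $\mathcal{E}_r^*(\sigma)$ back to $\mathcal{E}_r(\sigma)$ with the constant $D_1^{-1}$ that was deliberately absorbed in the choice of $\eta$.
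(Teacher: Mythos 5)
Your proposal is correct and follows essentially the same approach as the paper: pick $\eta$ with $\eta^r<D_1C_6$, set $M_1:=M(\eta)$ from Lemma \ref{lem02} using $|K_\sigma|,|K_\sigma^*|\leq 1$, then use the change-of-variables identity $I_\sigma^*(h_\sigma(\gamma),\mu)=\mathcal{E}_r^*(\sigma)\,e^r_{l,r}(\nu_\sigma^*)$ together with $\mathcal{E}_r^*(\sigma)\leq D_1^{-1}\mathcal{E}_r(\sigma)$ from Lemma \ref{san1}. The paper simply makes the concrete choice $\eta=(2^{-1}C_6D_1)^{1/r}$, which is a special case of your condition.
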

\begin{proof}
Let $\eta:=(2^{-1}C_6D_1)^{1/r}$ and $M_1:=M(\eta)$. Note that $|K_\sigma|,|K_\sigma^*|\leq 1$. By Lemma \ref{lem02}, for $l\geq M_1$, we obtain
\[
e^r_{l,r}(\nu_\sigma),\;e^r_{l,r}(\nu_\sigma^*)\leq(\eta|K_\nu|)^r\leq2^{-1}C_6D_1<C_6D_1.
\]
Now let $\gamma\in C_{l,r}(\nu_\sigma^*)$. By Lemma \ref{san1}, we have
\begin{eqnarray*}
I_\sigma^*(h_\sigma(\gamma),\mu)&=&\mathcal{E}_r^*(\sigma)\int d(x,\gamma)^rd\nu_\sigma^*(x)=\mathcal{E}_r^*(\sigma)e^r_{l,r}(\nu_\sigma^*)\\&<&D_1C_6\mathcal{E}_r^*(\sigma)\leq C_6\mathcal{E}_r(\sigma).
\end{eqnarray*}
This completes the proof of the lemma.
\end{proof}

Let $L_0$ be as defined in Remark \ref{zhusanguo01} and $M_2:=M_1+L_0$. We have
\begin{lemma}\label{l00}
Let $\alpha\subset\mathbb{R}^q$ and $\sigma\in\Lambda_{k,r}$. If ${\rm card}(\alpha\cap(J_\sigma)_{\frac{s_\sigma}{8}})<M_2$, then
\[
I_\sigma(\alpha,\mu)\geq \mathcal{E}_r(\sigma)e_{M_2-1,r}^r(\nu_\sigma)).
\]
\end{lemma}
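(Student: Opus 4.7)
The plan is a clean two-case split based on whether some point of $J_\sigma\cap E$ lies far from $\alpha$. I will write $\alpha_1:=\alpha\cap(J_\sigma)_{s_\sigma/8}$ and $\alpha_2:=\alpha\setminus(J_\sigma)_{s_\sigma/8}$, so the hypothesis reads ${\rm card}(\alpha_1)\leq M_2-1$. The basic observation, used in both cases, is that for every $x\in J_\sigma$ and every $a\in\alpha_2$ one has $d(x,a)\geq d(a,J_\sigma)>s_\sigma/8$, hence $d(x,\alpha_2)>s_\sigma/8$ throughout $J_\sigma$.

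\emph{Case 1: there exists $x_0\in J_\sigma\cap E$ with $d(x_0,\alpha)>s_\sigma/8$.} Lemma~\ref{l7} gives $I_\sigma(\alpha,\mu)\geq C_6\mathcal{E}_r(\sigma)$ directly. Since $M_2-1\geq M_1$ and $D_1\leq 1$ (forced by the two-sided inequality in Lemma~\ref{san1}), Lemma~\ref{l8} yields $e_{M_2-1,r}^r(\nu_\sigma)\leq e_{M_1,r}^r(\nu_\sigma)<D_1C_6\leq C_6$, and combining these bounds gives $I_\sigma(\alpha,\mu)\geq\mathcal{E}_r(\sigma)e_{M_2-1,r}^r(\nu_\sigma)$.

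\emph{Case 2: $d(x,\alpha)\leq s_\sigma/8$ for every $x\in J_\sigma\cap E$.} Combined with the observation $d(x,\alpha_2)>s_\sigma/8$, this forces $d(x,\alpha)=d(x,\alpha_1)$ on all of $J_\sigma\cap E$. Since $\mu$ is supported on $E$, the equality holds $\mu$-a.e.\ on $J_\sigma$, and the change of variables $y=g_\sigma^{-1}(x)$ together with the definition of $\nu_\sigma$ rescales the integral to
\[
I_\sigma(\alpha,\mu)=\int_{J_\sigma}d(x,\alpha_1)^r\,d\mu(x)=\mathcal{E}_r(\sigma)\int d(y,g_\sigma^{-1}(\alpha_1))^r\,d\nu_\sigma(y).
\]
As ${\rm card}(g_\sigma^{-1}(\alpha_1))\leq M_2-1$ and $e_{n,r}^r(\nu_\sigma)$ is non-increasing in $n$, the right-hand integral is at least $e_{M_2-1,r}^r(\nu_\sigma)$, which closes the case. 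The only delicate step is the comparison $e_{M_2-1,r}^r(\nu_\sigma)\leq C_6$ needed in Case~1, but this is mild given the stronger bound in Lemma~\ref{l8}, so I do not foresee a genuine obstacle.
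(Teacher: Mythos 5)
Your proof is correct and follows essentially the same route as the paper's: the paper also splits into the two cases, handles the ``near'' case by the same change of variables to $\nu_\sigma$, and handles the ``far'' case by Lemma~\ref{l7} followed by the comparison $e^r_{M_2-1,r}(\nu_\sigma)\leq e^r_{M_1,r}(\nu_\sigma)<D_1C_6\leq C_6$ from Lemma~\ref{l8}. The only cosmetic difference is that the paper phrases the dichotomy in terms of the set $G_\sigma$ of points whose nearest neighbour in $\alpha$ lies outside $(J_\sigma)_{s_\sigma/8}$, whereas you split on whether $\sup_{x\in J_\sigma\cap E}d(x,\alpha)\leq s_\sigma/8$; both dichotomies are valid and lead to the same two arguments.
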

\begin{proof}
Let $\sigma\in\Lambda_{k,r}$ and $\alpha\subset\mathbb{R}^q$. We write
\begin{equation}\label{gsigma}
G_\sigma:=\{x\in J_\sigma\cap E: d(x,\alpha)=d(x,\alpha\setminus(J_\sigma)_{\frac{s_\sigma}{8}})\}.
\end{equation}
We have the following two cases:

Case (a1): $G_\sigma=\emptyset$. In this case, we have $d(x,\alpha)=d(x,\alpha\cap (J_\sigma)_{\frac{s_\sigma}{8}})$ for every $x\in J_\sigma\cap E$.
Note that $K_\sigma\subset g_\sigma^{-1}(E\cap J_\sigma)$. It follows that
\begin{eqnarray*}
I_\sigma(\alpha,\mu)&=&\mu(J_\sigma)\int_{J_\sigma}d(x,\alpha)^rd\nu_\sigma \circ g_\sigma^{-1}(x)\\&=&\mathcal{E}_r(\sigma)\int d(x,g_\sigma^{-1}(\alpha\cap (J_\sigma)_{\frac{s_\sigma}{8}}))^rd\nu_\sigma(x)\\&\geq&\mathcal{E}_r(\sigma) e^r_{M_2-1,r}(\nu_\sigma)).
\end{eqnarray*}

Case (a2) $G_\sigma\neq\emptyset$. Then by Lemmas \ref{l7} and \ref{l8}, we obtain
\begin{eqnarray*}
I_\sigma(\alpha,\mu)\geq C_6\mathcal{E}_r(\sigma)>e^r_{M_1,r}(\nu_\sigma)\mathcal{E}_r(\sigma)\geq e^r_{M_2-1,r}(\nu_\sigma)\mathcal{E}_r(\sigma).
\end{eqnarray*}
The lemma follows by combining the above analysis.
\end{proof}

\begin{lemma}\label{lem03}(see \cite[Lemma 2.3]{Zhu:20})
Let $\nu$ be a Borel probability measure on $\mathbb{R}^q$ with support $K_\nu$. Assume that $|K_\nu|\leq 1$ and there exist constants $C,s>0$ such that $\sup_{x\in K_\nu}\mu(x,\epsilon)\leq C\epsilon^s$. Then there exists a $\zeta_{l,r}$ depending only on $l,r,C,s$ and $q$ such that
$e^r_{l-1,r}(\nu)-e^r_{l,r}(\nu)\geq\zeta_{l,r}$.
\end{lemma}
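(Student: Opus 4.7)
The plan is to establish the gap by constructing, for any $\gamma\in\mathcal{D}_{l-1}$, a single additional point $y\in\mathbb{R}^q$ whose adjunction strictly reduces the $r$th-power error by at least a positive amount $\zeta_{l,r}$ depending only on $l,r,C,s,q$; taking the infimum over $\gamma$ then yields the conclusion. Since $|K_\nu|\leq 1$, I would first translate so that $K_\nu\subset\overline{B(0,1)}$, which is possible without loss of generality.

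To locate $y$, I would exploit the Frostman-type hypothesis as follows. Fix $\gamma\in\mathcal{D}_{l-1}$ and choose $\epsilon_0>0$ depending only on $l,C,s$ small enough that $\nu\bigl(\bigcup_{g\in\gamma}B(g,\epsilon_0)\bigr)\leq\tfrac12$; this is possible because, for each $g\in\gamma$, either $B(g,\epsilon_0)\cap K_\nu=\emptyset$ and the ball is $\nu$-null, or there is some $z\in K_\nu\cap B(g,\epsilon_0)$ with $B(g,\epsilon_0)\subset B(z,2\epsilon_0)$, so the hypothesis gives $\nu(B(g,\epsilon_0))\leq 2^sC\epsilon_0^s$. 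Hence the set $G:=\{x\in K_\nu:d(x,\gamma)>\epsilon_0\}$ satisfies $\nu(G)\geq\tfrac12$. Next, cover $\overline{B(0,1)}$ by $N$ closed balls of radius $\epsilon_0/8$, where $N$ depends only on $\epsilon_0$ and $q$ (via standard volume comparison, e.g.\ $N\leq(1+16/\epsilon_0)^q$). By pigeonhole, at least one such ball $B(y,\epsilon_0/8)$ satisfies $\nu(B(y,\epsilon_0/8)\cap G)\geq\tfrac{1}{2N}$.

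On this intersection every $x$ obeys $d(x,y)\leq\epsilon_0/8<\epsilon_0<d(x,\gamma)$, so $x$ lies in the Voronoi region of $y$ inside $\gamma\cup\{y\}$ and $d(x,\gamma)^r-d(x,y)^r\geq(1-8^{-r})\epsilon_0^r$. Integrating over $B(y,\epsilon_0/8)\cap G$ and using non-negativity elsewhere, $\int d(x,\gamma)^r\,d\nu-\int d(x,\gamma\cup\{y\})^r\,d\nu\geq(1-8^{-r})\epsilon_0^r/(2N)=:\zeta_{l,r}$. Since $\gamma\cup\{y\}\in\mathcal{D}_l$, the second integral is at least $e_{l,r}^r(\nu)$, and taking the infimum over $\gamma\in\mathcal{D}_{l-1}$ gives the claim. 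The only real bookkeeping obstacle is to verify that $N$ (and hence $\zeta_{l,r}$) is truly a function of $(l,r,C,s,q)$ alone; this is immediate because $\epsilon_0$ was chosen to depend only on $(l,C,s)$ and the covering number of the unit ball at scale $\epsilon_0/8$ depends only on $\epsilon_0$ and $q$.
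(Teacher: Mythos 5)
Your argument is correct. The paper itself does not prove this lemma but simply cites \cite[Lemma 2.3]{Zhu:20}, so a line-by-line comparison is not possible; however, the strategy you use — bound $\nu$ off a small neighborhood of any $(l-1)$-point set via the Frostman condition, cover $K_\nu$ at a scale $\epsilon_0/8$ depending only on $l,C,s,q$, extract by pigeonhole a ball carrying mass $\geq 1/(2N)$ of the far-away set $G$, and adjoin its center to improve the $r$th-power error by the fixed amount $(1-8^{-r})\epsilon_0^r/(2N)$ — is the standard route to such a uniform gap and is essentially the argument in the cited reference. All the details check out: $\nu(B(g,\epsilon_0))\leq 2^sC\epsilon_0^s$ via the triangle inequality and $\nu(\mathbb{R}^q\setminus K_\nu)=0$; $y$ cannot lie in $\gamma$ since then $B(y,\epsilon_0/8)\cap G=\emptyset$; the gain is nonnegative off $B(y,\epsilon_0/8)\cap G$ because $\gamma\subset\gamma\cup\{y\}$; and taking the infimum over $\gamma\in\mathcal{D}_{l-1}$ is legitimate because $\zeta_{l,r}$ does not depend on $\gamma$.
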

\begin{lemma}\label{l9}
There exists an integer $M_3>M_2+L_0$ such that for  $l\geq M_3$ and every pair $\sigma,\omega\in\Lambda_{k,r}$, the following holds:
\[
e^r_{l-M_2-L_0}(\nu_\sigma^*)<D_1\eta_r(e^r_{M_2-1}(\nu_\omega)-e^r_{M_2}(\nu_\omega)).
\]
In particular, for every $l\geq M_3$ and $\gamma\in C_{l-M_2-L_0,r}(\nu_\sigma^*)$, we have
\end{lemma}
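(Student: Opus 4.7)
The plan is to decouple the two sides of the claimed inequality: bound the right-hand side below by a positive constant that is independent of $\omega$, and bound the left-hand side above by something that tends to $0$ as $l\to\infty$ uniformly in $\sigma$. The key point that makes this work is that the constants $C_4$ in Lemma \ref{lem01} and $M(\eta)$ in Lemma \ref{lem02} depend neither on $\sigma$ nor on $\omega$, a uniformity that ultimately stems from the doubling hypothesis \eqref{dm} together with the boundedness $M_\sigma\le M_0$ from Lemma \ref{l6}.

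For the right-hand side, since $|K_\omega|\le 1$ and Lemma \ref{lem01} gives $\sup_x\nu_\omega(B(x,\epsilon))\le C_4\epsilon^t$ with $C_4,t$ independent of $\omega$, Lemma \ref{lem03} produces a positive constant $\zeta_{M_2,r}$ depending only on $M_2,r,C_4,t,q$ such that
\[
e^r_{M_2-1,r}(\nu_\omega)-e^r_{M_2,r}(\nu_\omega)\ge \zeta_{M_2,r}\qquad\text{for every }\omega\in\Lambda_{k,r}.
\]
For the left-hand side, note that $K_\sigma^*\subset h_\sigma^{-1}(A_\sigma^*)$ has diameter at most $1$, so Lemma \ref{lem02} applied to $\nu_\sigma^*$ provides, for any prescribed $\eta>0$, an integer $M(\eta)$ independent of $\sigma$ such that $e^r_{l,r}(\nu_\sigma^*)\le\eta^r$ whenever $l\ge M(\eta)$. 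Choose
\[
\eta:=\bigl(2^{-1}D_1\eta_r\zeta_{M_2,r}\bigr)^{1/r}
\]
and set $M_3:=M(\eta)+M_2+L_0+1$. Then $M_3>M_2+L_0$, and for every $l\ge M_3$ the two estimates combine to give the asserted inequality
\[
e^r_{l-M_2-L_0,r}(\nu_\sigma^*)\le \tfrac12 D_1\eta_r\zeta_{M_2,r}<D_1\eta_r\bigl(e^r_{M_2-1,r}(\nu_\omega)-e^r_{M_2,r}(\nu_\omega)\bigr).
\]

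For the ``in particular'' statement, let $\gamma\in C_{l-M_2-L_0,r}(\nu_\sigma^*)$. The change of variables $\nu_\sigma^*=\mu(\cdot|A_\sigma^*)\circ h_\sigma$ yields $I_\sigma^*(h_\sigma(\gamma),\mu)=\mathcal{E}_r^*(\sigma)\,e^r_{l-M_2-L_0,r}(\nu_\sigma^*)$. Inserting the main inequality above, using $\mathcal{E}_r^*(\sigma)\le D_1^{-1}\mathcal{E}_r(\sigma)$ from Lemma \ref{san1}, and finally the elementary antichain estimate $\eta_r\mathcal{E}_r(\sigma)\le\mathcal{E}_r(\omega)$ (immediate from $\mathcal{E}_r(\sigma)<\eta_r^k$ and $\mathcal{E}_r(\omega)\ge\eta_r^{k+1}$, both valid for $\sigma,\omega\in\Lambda_{k,r}$), one is led to a bound of the shape
\[
I_\sigma^*(h_\sigma(\gamma),\mu)<\mathcal{E}_r(\omega)\bigl(e^r_{M_2-1,r}(\nu_\omega)-e^r_{M_2,r}(\nu_\omega)\bigr),
\]
which is what one expects the ``in particular'' conclusion to assert. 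The main obstacle to this argument, were the doubling property not available, is exactly the loss of uniformity: the constants $C_4,\zeta_{M_2,r}$, and $M(\eta)$ would then depend on the specific cylinders $\sigma,\omega$, and no single integer $M_3$ could work for the full antichain $\Lambda_{k,r}$.
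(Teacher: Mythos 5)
Your proposal is correct and follows essentially the same route as the paper: set $\eta:=(2^{-1}D_1\eta_r\zeta_{M_2,r})^{1/r}$, take $M_3:=M(\eta)+M_2+L_0$ (your extra $+1$ is harmless), combine Lemmas \ref{lem02} and \ref{lem03} for the main inequality, and then use Lemma \ref{san1} together with the bounds $\mathcal{E}_r(\sigma)<\eta_r^k$ and $\mathcal{E}_r(\omega)\geq\eta_r^{k+1}$ from (\ref{lambdakr}) for the ``in particular'' part. The observation that the doubling hypothesis provides the uniformity of $C_4$, $\zeta_{M_2,r}$, and $M(\eta)$ across $\sigma,\omega\in\Lambda_{k,r}$ is exactly the point the paper implicitly relies on.
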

\[
I_\sigma^*(h_\sigma(\gamma),\mu)<\mathcal{E}_r(\omega)\big(e^r_{M_2-1}(\nu_\omega)-e^r_{M_2}(\nu_\omega)\big).
\]
\begin{proof}
Note that $|K_\sigma^*|\leq 1$. We set
\[
\eta:=(2^{-1}D_1\eta_r\zeta_{M_2,r})^{1/r},\;M_3:=M(\eta)+M_2+L_0.
\]
Then by Lemmas \ref{lem02} and \ref{lem03}, for all $l\geq M_3$, we obtain
\begin{eqnarray*}
e^r_{l-M_2-L_0}(\nu_\sigma^*)\leq2^{-1}D_1\eta_r\zeta_{M_2,r}<D_1\eta_r(e^r_{M_2-1}(\nu_\omega)-e^r_{M_2}(\nu_\omega)).
\end{eqnarray*}
Let $l\geq M_3$ and $\gamma\in C_{l-M_2-L_0,r}(\nu_\sigma^*)$. Using Lemma \ref{san1} and (\ref{lambdakr}), we deduce
\begin{eqnarray*}
I_\sigma^*(h_\sigma(\gamma),\mu)&=&\mathcal{E}_r^*(\sigma)e^r_{l,r}(\nu_\sigma^*)\\
&<&\mathcal{E}_r^*(\sigma)D_1\eta_r(e^r_{M_2-1}(\nu_\omega)-e^r_{M_2}(\nu_\omega))\\
&\leq&\mathcal{E}_r(\sigma)\eta_r(e^r_{M_2-1}(\nu_\omega)-e^r_{M_2}(\nu_\omega))
\\&\leq&\mathcal{E}_r(\omega)(e^r_{M_2-1}(\nu_\omega)-e^r_{M_2}(\nu_\omega)).
\end{eqnarray*}
This completes the proof of the lemma.
\end{proof}

\begin{remark}
Let $N_1$ be as defined in Remark \ref{rem02}. We define
\[
M_4:=M_3+L_0\;\;{\rm and}\;\;M_5:=N_1M_4.
\]
For every $n\geq M_4\phi_{1,r}$, there exists a unique $k\in\mathbb{N}$, such that
\begin{equation}\label{n+k}
M_4\phi_{k,r}\leq n<M_4\phi_{k+1,r}.
\end{equation}
By Remark \ref{rem02}, we know that $\phi_{k+1,r}\leq N_1\phi_{k,r}$. Thus, we have
\[
M_4\phi_{k,r}\leq n<N_1M_4\phi_{k,r}=M_5\phi_{k,r}.
\]
\end{remark}

In the following, we will use Lemmas \ref{lem05}-\ref{lem04} to select three more integers $M_i, 5\leq i\leq 7$. These integers will be used to establish an upper bound for the numbers of $n$-optimal points lying in $(J_\sigma)_{\frac{s_\sigma}{8}},\sigma\in\Lambda_{k,r}$.
\begin{lemma}\label{lem05}
Let $H\geq 1$ be an integer. Then there exists a constant $\xi_{H,r}$ which depends on $r, C_4, t$ and $H$, such that, for every $\sigma\in\Omega^*$, we have
\[
e_{H,r}^r(\nu_\sigma)\geq \xi_{H,r}.
\]
\end{lemma}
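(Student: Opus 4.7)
The plan is to exploit the uniform small-ball bound for $\nu_\sigma$ provided by Lemma \ref{lem01}. Since $\nu_\sigma$ is a probability measure, any finite set $\alpha\subset\mathbb{R}^q$ with $\mathrm{card}(\alpha)=H$ can cover only a controlled portion of the mass with small balls around its points, and the remaining mass contributes a uniform lower bound to $\int d(x,\alpha)^r\,d\nu_\sigma$.

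Concretely, I would fix an arbitrary $\sigma\in\Omega^*$ and an arbitrary $\alpha\subset\mathbb{R}^q$ with $\mathrm{card}(\alpha)=H$. For every $\epsilon>0$, subadditivity together with Lemma \ref{lem01} gives
\[
\nu_\sigma\bigl(\textstyle\bigcup_{a\in\alpha}B(a,\epsilon)\bigr)\leq\sum_{a\in\alpha}\nu_\sigma(B(a,\epsilon))\leq HC_4\epsilon^t.
\]
On the complement of this union one has $d(x,\alpha)>\epsilon$, hence
\[
\int d(x,\alpha)^r\,d\nu_\sigma(x)\geq\epsilon^r\bigl(1-HC_4\epsilon^t\bigr).
\]
This is valid for every $\epsilon>0$ and every $\alpha$ with $\mathrm{card}(\alpha)=H$, so the right-hand side is a lower bound for $e_{H,r}^r(\nu_\sigma)$.

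I would then optimize in $\epsilon$ by choosing $\epsilon$ so that $HC_4\epsilon^t=\tfrac{1}{2}$, i.e.\ $\epsilon=(2HC_4)^{-1/t}$, which yields the uniform lower bound
\[
e_{H,r}^r(\nu_\sigma)\geq\xi_{H,r}:=\tfrac{1}{2}(2HC_4)^{-r/t}.
\]
The constant $\xi_{H,r}$ depends only on $r$, $C_4$, $t$, $H$ (and implicitly on $q$ through $C_4$), and is independent of $\sigma$, which is exactly what is claimed. There is no real obstacle here; the content of the argument is purely the packing/covering estimate coming from Lemma \ref{lem01}, and the rest is a one-line optimization.
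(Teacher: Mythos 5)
Your proof is correct and follows essentially the same argument as the paper: bound $\nu_\sigma(\bigcup_{a\in\alpha}B(a,\epsilon))$ by $HC_4\epsilon^t$ via subadditivity and Lemma \ref{lem01}, then pick $\epsilon$ small enough that a definite fraction of the mass lies outside the union, giving a lower bound $\gtrsim\epsilon^r$. The only cosmetic difference is the choice of threshold ($1/2$ versus the paper's $1/4$), which just changes the explicit constant $\xi_{H,r}$.
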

\begin{proof}
Let $\gamma\in C_{H,r}(\nu_\sigma)$. Let $\epsilon_H:=\frac{1}{(4HC_4)^{1/t}}$. By Lemma \ref{lem01}, we have
\begin{equation}\label{sgzhu3}
\nu_\sigma(\bigcup_{b\in\gamma}B(b,\epsilon_H))\leq\sum_{b\in\gamma}\nu_\sigma(B(b,\epsilon_H))\leq \frac{1}{4}.
\end{equation}
As a consequence of (\ref{error}) and (\ref{sgzhu3}), we obtain
\begin{eqnarray*}
e_{H,r}^r(\nu_\sigma)\geq\int_{K_\sigma\setminus\bigcup_{b\in\gamma}B(b,\epsilon_H)}d(x,\gamma)^rd\nu_\sigma(x)
\geq\frac{3}{4}\epsilon_H^r.
\end{eqnarray*}
By defining $\xi_{H,r}:=\frac{3}{4}\epsilon_H^r$, the proof of the lemma is complete.
\end{proof}
\begin{lemma}\label{san3}
Let $\emptyset\neq\alpha\subset\mathbb{R}^q$. There exists a constant $C_7>0$ such that for every $\sigma\in\Lambda_{k,r}$ with ${\rm card}(\alpha\cap J_\sigma^\circ)<M_5$, we have
\[
I_\sigma(\alpha,\mu)\geq C_7\mathcal{E}_r(\sigma).
\]
\end{lemma}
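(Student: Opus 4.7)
The plan is to decompose $\alpha=\alpha_1\sqcup\alpha_2$ with $\alpha_1:=\alpha\cap J_\sigma^\circ$ (so $\mathrm{card}(\alpha_1)<M_5$ by hypothesis) and $\alpha_2:=\alpha\setminus J_\sigma^\circ$, and to exploit the elementary geometric \emph{shielding inequality}
\[
d(x,\alpha_2)\geq d(x,\partial J_\sigma)\quad\text{for every } x\in J_\sigma^\circ,
\]
which follows from $\alpha_2\subset\mathbb{R}^q\setminus J_\sigma^\circ$. By (\ref{A1}), on the deep sub-cylinder $J_{\sigma\ast\tau(\sigma)}$ one has $d(\cdot,\partial J_\sigma)\geq\delta s_\sigma$, so once I retreat there the potentially uncontrolled set $\alpha_2$ becomes harmless and only the bounded set $\alpha_1$ contributes.

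Set $\eta_0:=\min\{1/8,\delta/2\}>0$ and distinguish two cases. \textbf{Case (a):} there exists $x_0\in J_\sigma\cap E$ with $d(x_0,\alpha)>\eta_0 s_\sigma$. I would run the proof of Lemma~\ref{l7} verbatim with $1/8$ replaced by $\eta_0$: pick $k'\in\mathbb{N}$ with $\overline{s}^{k'}<\eta_0/2$, find $\tau_1\in\Psi_{|\sigma|,k'}$ with $x_0\in J_{\sigma\ast\tau_1}\subset B(x_0,\eta_0 s_\sigma/2)$, and conclude $I_\sigma(\alpha,\mu)\geq (\eta_0/2)^r p^{k'}\mathcal{E}_r(\sigma)$ by iterated use of Lemma~\ref{l2}. \textbf{Case (b):} $d(x,\alpha)\leq\eta_0 s_\sigma$ on $J_\sigma\cap E$. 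Then for $x\in J_{\sigma\ast\tau(\sigma)}\cap E$,
\[
d(x,\alpha_2)\geq d(x,\partial J_\sigma)\geq\delta s_\sigma>\eta_0 s_\sigma\geq d(x,\alpha),
\]
using $\eta_0<\delta$; hence $d(x,\alpha)=d(x,\alpha_1)$ throughout $J_{\sigma\ast\tau(\sigma)}$.

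Still in Case (b), I would restrict the integral to $J_{\sigma\ast\tau(\sigma)}$ and apply the change of variable $x=g_{\sigma\ast\tau(\sigma)}(y)$, obtaining
\[
I_\sigma(\alpha,\mu)\geq \mathcal{E}_r(\sigma\ast\tau(\sigma))\int d\bigl(y,g_{\sigma\ast\tau(\sigma)}^{-1}(\alpha_1)\bigr)^r d\nu_{\sigma\ast\tau(\sigma)}(y).
\]
Since $g_{\sigma\ast\tau(\sigma)}^{-1}(\alpha_1)$ has fewer than $M_5$ points, the integral is bounded below by $e_{M_5-1,r}^r(\nu_{\sigma\ast\tau(\sigma)})\geq\xi_{M_5-1,r}$ via Lemma~\ref{lem05}, while Lemma~\ref{l0000} combined with $|\tau(\sigma)|\leq k_0$ gives $\mathcal{E}_r(\sigma\ast\tau(\sigma))\geq D_0\underline{s}^{rk_0}\mathcal{E}_r(\sigma)$. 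Combining both cases yields the claim with $C_7:=\min\bigl\{(\eta_0/2)^r p^{k'},\;D_0\underline{s}^{rk_0}\xi_{M_5-1,r}\bigr\}$.

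The main obstacle---and the reason the hypothesis bounds $\mathrm{card}(\alpha\cap J_\sigma^\circ)$ rather than $\mathrm{card}(\alpha\cap(J_\sigma)_{s_\sigma/8})$ as in Lemma~\ref{l00}---is that $\alpha$ may contain arbitrarily many points just outside $\overline{J_\sigma}$ that, a priori, could produce tiny values of $d(x,\alpha)$ near $\partial J_\sigma$. The shielding inequality converts this threat into $d(x,\alpha_2)\geq d(x,\partial J_\sigma)$, but the latter is useful only where $d(x,\partial J_\sigma)$ is already of order $s_\sigma$; accessing such a region is precisely what condition~(\ref{A1}) provides via $J_{\sigma\ast\tau(\sigma)}$, and this is why (\ref{A1}) enters the argument in an essential way.
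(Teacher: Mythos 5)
Your proof is correct and follows essentially the same strategy as the paper's: decompose $\alpha$ into points inside and outside $J_\sigma^\circ$, exploit condition~(\ref{A1}) to retreat to the deep subcylinder $J_{\sigma\ast\tau(\sigma)}$ where points of $\alpha$ outside $J_\sigma^\circ$ are at distance $\geq\delta s_\sigma$, treat the ``some point of $J_\sigma\cap E$ is far from $\alpha$'' case by the covering trick of Lemma~\ref{l7}, and in the complementary case bound the integral below via $e^r_{M_5-1,r}(\nu_{\sigma\ast\tau(\sigma)})\geq\xi_{M_5-1,r}$ from Lemma~\ref{lem05} together with $\mathcal{E}_r(\sigma\ast\tau(\sigma))\gtrsim\mathcal{E}_r(\sigma)$. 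The only cosmetic difference is the trigger for the case split: the paper splits on whether the set $S(\sigma)=\{x\in J_{\sigma\ast\tau(\sigma)}\cap E: d(x,\alpha)=d(x,\alpha\setminus J_\sigma^\circ)\}$ is empty, whereas you split on whether $\sup_{x\in J_\sigma\cap E}d(x,\alpha)$ exceeds $\eta_0 s_\sigma$ with $\eta_0=\min\{1/8,\delta/2\}$; both formulations isolate exactly the configuration in which $\alpha\setminus J_\sigma^\circ$ can be discarded on $J_{\sigma\ast\tau(\sigma)}$, and your version has the small advantage of making the observation $\alpha\cap J_\sigma^\circ\neq\emptyset$ in Case~(b) automatic.
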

\begin{proof}
By the assumption (\ref{A1}), we have $d(J_{\sigma\ast\tau(\sigma)},\partial J_\sigma)\geq \delta s_\sigma$. We write
\[
S(\sigma):=\big\{x\in J_{\sigma\ast\tau(\sigma)}\cap E:d(x,\alpha)=d(x,\alpha\setminus J^\circ)\big\}.
\]
We distinguish between the following two cases.

Case (b1): $S(\sigma)=\emptyset$. In this case, we have
\begin{eqnarray}\label{sz1}
I_{\sigma\ast\tau(\sigma)}(\alpha,\mu)&=&\mathcal{E}_r(\sigma\ast\tau(\sigma))\int d(x,g_\tau^{-1}(\alpha\cap J_\sigma^\circ))d\nu_{\sigma\ast\tau(\sigma)}(x)
\nonumber\\&\geq&\mathcal{E}_r(\sigma\ast\tau(\sigma))e^r_{M_5-1,r}(\nu_{\sigma\ast\tau(\sigma)}).
\end{eqnarray}
By Lemma \ref{lem05}, we have
$e^r_{M_5-1,r}(\nu_{\sigma\ast\tau(\sigma)})\geq \xi_{M_5-1}$. This and (\ref{sz1}) yield
\begin{eqnarray}\label{sz3}
I_\sigma(\alpha,\mu)\geq I_{\sigma\ast\tau(\sigma)}(\alpha,\mu)\geq \xi_{M_5-1}\mathcal{E}_r(\sigma\ast\tau(\sigma))\geq \xi_{M_5-1}p^{k_0}\underline{s}^{k_0}\mathcal{E}_r(\sigma).
\end{eqnarray}

Case (b2): $S(\sigma)\neq\emptyset$. Fix an arbitrary point $x_0\in S(\sigma)$. By (\ref{A1}), we have
\[
d(x_0,\alpha)\geq d(x_0,\partial J_\sigma)\geq\delta s_\sigma.
 \]
Thus, we have $d\big(B(x_0,4^{-1}\delta s_\sigma),\alpha\big)>4^{-1}\delta s_\sigma$. Let $k_5:=[\frac{\log(\delta/4)}{\log\overline{s}}]+1$. Then for every $\rho\in\Psi_{|\sigma|,k_5}$, we have
$|J_{\sigma\ast\rho}|\leq s_\sigma\overline{s}^{k_5}<\frac{\delta}{4}s_\sigma$.
Therefore, there exists some $\rho\in\Phi_{|\sigma|,|\rho|}$ with $|\rho|\leq k_5$ such that
\[
x_0\in J_{\sigma\ast\rho},\;\;d(J_{\sigma\ast\rho},\alpha)\geq 4^{-1}\delta s_\sigma.
\]
Using this and Lemma \ref{l2}, we deduce
\begin{eqnarray}\label{sz2}
I_\sigma(\alpha,\mu)\geq I_{\sigma\ast\rho}(\alpha_n,\mu)\geq p^{k_5}(4^{-1}\delta)^r\mathcal{E}_r(\sigma).
\end{eqnarray}
Combining (\ref{sz3}) and (\ref{sz2}), the lemma follows by defining
\[
C_7:=\min\big\{\xi_{M_5-1}p^{k_0}\underline{s}^{k_0},p^{k_5}(4^{-1}\delta)^r\big\}.
\]
\end{proof}
\begin{lemma}\label{san4}
There exists a constant $M_6$ such that, $e^r_{l,r}(\nu_\sigma)<\frac{1}{2}C_7$ for every $l\geq M_6$ and $\sigma\in\Omega^*$.
In particular, for every $l\geq M_6$ and $\gamma\in C_{l,r}(\nu_\sigma)$, we have
\[
I_\sigma(g_\sigma(\gamma),\mu)<\frac{1}{2}C_7\mathcal{E}_r(\sigma).
\]
\end{lemma}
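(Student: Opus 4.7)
The plan is to derive this lemma as an essentially immediate consequence of Lemma \ref{lem02} applied uniformly over $\sigma \in \Omega^*$, combined with a change-of-variables identity relating the quantization error of $\nu_\sigma$ back to the integral $I_\sigma(g_\sigma(\gamma),\mu)$.

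First I would fix a value $\eta > 0$ small enough so that $\eta^r < \tfrac{1}{2}C_7$ (for instance $\eta := (\tfrac{1}{4}C_7)^{1/r}$), and then set $M_6 := M(\eta)$, where $M(\eta)$ is the integer furnished by Lemma \ref{lem02}. The key uniformity point is that $\nu_\sigma$ is a Borel probability measure supported on $K_\sigma \subset g_\sigma^{-1}(J_\sigma)$ with $|K_\sigma|\leq 1$, and this size bound holds for \emph{every} $\sigma \in \Omega^*$. Consequently, applying Lemma \ref{lem02} to each $\nu_\sigma$ yields, for every $l \geq M_6$,
\[
e^r_{l,r}(\nu_\sigma) \leq (\eta|K_\sigma|)^r \leq \eta^r < \tfrac{1}{2}C_7,
\]
which is the first assertion.

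For the second assertion, let $\gamma \in C_{l,r}(\nu_\sigma)$ with $l \geq M_6$. Since $g_\sigma$ is a similitude with ratio $s_\sigma$, a straightforward change of variables using $\nu_\sigma = \mu(\cdot|J_\sigma)\circ g_\sigma$ gives
\[
I_\sigma(g_\sigma(\gamma),\mu) = \mu(J_\sigma)\int d(g_\sigma(y),g_\sigma(\gamma))^r\, d\nu_\sigma(y) = \mathcal{E}_r(\sigma)\int d(y,\gamma)^r\, d\nu_\sigma(y) = \mathcal{E}_r(\sigma)\, e^r_{l,r}(\nu_\sigma),
\]
where the last equality uses the optimality of $\gamma$. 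Combining this identity with the uniform bound established above immediately yields $I_\sigma(g_\sigma(\gamma),\mu) < \tfrac{1}{2}C_7\,\mathcal{E}_r(\sigma)$.

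There is no real obstacle here: the whole content of the lemma is the uniformity in $\sigma$, and this is already built into Lemma \ref{lem02} because the integer $M(\eta)$ depends only on $\eta$ and $q$, together with the uniform diameter bound $|K_\sigma|\leq 1$. Thus, unlike Lemma \ref{l8} (where one needed the additional factor $D_1$ arising from $\mathcal{E}_r^*$ vs.\ $\mathcal{E}_r$) or Lemma \ref{l9} (where one needed the quantitative decrement $\zeta_{M_2,r}$ from Lemma \ref{lem03}), here the only task is to pick $\eta$ small enough to absorb the factor $\tfrac{1}{2}C_7$.
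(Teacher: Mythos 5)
Your proof is correct and matches the paper's approach exactly: the paper also sets $\eta := (4^{-1}C_7)^{1/r}$ and $M_6 := M(\eta)$ and invokes Lemma \ref{lem02}, with the change-of-variables identity for the second assertion left implicit. Nothing further is needed.
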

\begin{proof}
By Lemma \ref{lem02}, it suffices to define $\eta:=(4^{-1}C_7)^{1/r}$ and $M_6:=M(\eta)$.
\end{proof}

\begin{lemma}\label{lem04}
There exists a smallest integer $M_7>M_0M_5+M_6+L_0$ such that for  $l\geq M_7$ and $\sigma\in\Lambda_{k,r}$, the following holds:
\[
e^r_{l-M_6-L_0}(\nu_\sigma^*)<2^{-1}D_1\eta_rC_7.
\]
In particular, for every $\omega\in\Lambda_{k,r}\setminus\{\sigma\}$ and $\gamma\in C_{l-M_6-L_0}(\nu_\sigma^*)$, we have
\[
I_\sigma^*(h_\sigma(\gamma),\mu)<2^{-1}C_7\mathcal{E}_r(\omega).
\]
\end{lemma}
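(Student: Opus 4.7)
The plan is to follow verbatim the template established in Lemma \ref{l9}: the first (uniform) bound on $e^r_{l-M_6-L_0,r}(\nu_\sigma^*)$ is an immediate application of Lemma \ref{lem02}, and the second (pointwise) bound on $I_\sigma^*(h_\sigma(\gamma),\mu)$ is obtained by pushing this through the similitude $h_\sigma$ and converting between $\mathcal{E}_r^*$ and $\mathcal{E}_r$. The only genuinely new ingredient compared with Lemma \ref{l9} is that here the target constant is $C_7$ (the constant from Lemma \ref{san3}) rather than $e^r_{M_2-1}(\nu_\omega)-e^r_{M_2}(\nu_\omega)$, so no use of Lemma \ref{lem03} is needed.

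Concretely, I would first set
\[
\eta:=(2^{-1}D_1\eta_rC_7)^{1/r},
\]
and let $M(\eta)$ be the integer furnished by Lemma \ref{lem02}. Since $|K_\sigma^*|\leq 1$ uniformly in $\sigma$, Lemma \ref{lem02} gives $e^r_{m,r}(\nu_\sigma^*)\leq\eta^r$ for every $m\geq M(\eta)$ and every $\sigma\in\Lambda_{k,r}$. I would then define $M_7$ to be the smallest integer strictly exceeding $\max\{M_0M_5+M_6+L_0,\;M(\eta)+M_6+L_0\}$. This simultaneously enforces the prescribed bound $M_7>M_0M_5+M_6+L_0$ and guarantees $l-M_6-L_0\geq M(\eta)$ whenever $l\geq M_7$, yielding the first displayed inequality.

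For the second inequality, fix $\omega\in\Lambda_{k,r}\setminus\{\sigma\}$ and $\gamma\in C_{l-M_6-L_0,r}(\nu_\sigma^*)$. A change of variables via $h_\sigma$ (using (\ref{zhu3})) gives
\[
I_\sigma^*(h_\sigma(\gamma),\mu)=\mathcal{E}_r^*(\sigma)\cdot e^r_{l-M_6-L_0,r}(\nu_\sigma^*)<2^{-1}D_1\eta_rC_7\,\mathcal{E}_r^*(\sigma).
\]
Applying Lemma \ref{san1} (which provides $\mathcal{E}_r^*(\sigma)\leq D_1^{-1}\mathcal{E}_r(\sigma)$) collapses the factor $D_1$ and produces $2^{-1}\eta_rC_7\mathcal{E}_r(\sigma)$. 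To exchange $\sigma$ for $\omega$, I would invoke the two-sided bound $\eta_r^{k+1}\leq\mathcal{E}_r(\sigma),\mathcal{E}_r(\omega)<\eta_r^k$ established inside Remark \ref{rem02} from (\ref{lambdakr}) and Lemma \ref{l2}; this gives $\eta_r\mathcal{E}_r(\sigma)\leq\mathcal{E}_r(\omega)$, and the proof is complete.

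Since the argument is a near-mechanical adaptation of Lemma \ref{l9}, there is no real technical obstacle; the only bookkeeping point is the auxiliary requirement $M_7>M_0M_5+M_6+L_0$, which is imposed not for the lemma's internal needs but in anticipation of the counting estimates in Section 4 (where $M_0$, the uniform bound on $M_\sigma$ from Lemma \ref{l6}, together with $M_5=N_1M_4$ will be used to cap the number of $n$-optimal points in neighborhoods of cylinder sets). This extra constraint on $M_7$ is harmless because $M(\eta)$ can always be enlarged without affecting the uniform estimate supplied by Lemma \ref{lem02}.
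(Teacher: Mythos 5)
Your approach is essentially the same as the paper's one-line proof: invoke Lemma \ref{lem02} to produce $M(\eta)$, pick $M_7$ so that $l\geq M_7$ forces $l-M_6-L_0\geq M(\eta)$, then push the resulting bound through $h_\sigma$, collapse the $D_1$ via Lemma \ref{san1}, and trade $\mathcal{E}_r(\sigma)$ for $\mathcal{E}_r(\omega)$ via (\ref{lambdakr}).

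There is one slip worth noting. With your choice $\eta=(2^{-1}D_1\eta_rC_7)^{1/r}$, Lemma \ref{lem02} gives only $e^r_{l-M_6-L_0,r}(\nu_\sigma^*)\leq(\eta|K_\sigma^*|)^r\leq\eta^r=2^{-1}D_1\eta_rC_7$, a non-strict bound, whereas the lemma's first display asserts strict inequality and $|K_\sigma^*|$ can well equal $1$. The paper takes $\eta=(4^{-1}D_1\eta_rC_7)^{1/r}$ precisely so that $e^r_{l-M_6-L_0,r}(\nu_\sigma^*)\leq 4^{-1}D_1\eta_rC_7<2^{-1}D_1\eta_rC_7$. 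Your subsequent estimate on $I_\sigma^*(h_\sigma(\gamma),\mu)$ does in fact come out strict, because $\mathcal{E}_r(\sigma)<\eta_r^k$ and $\mathcal{E}_r(\omega)\geq\eta_r^{k+1}$ give $\eta_r\mathcal{E}_r(\sigma)<\mathcal{E}_r(\omega)$ strictly (you wrote $\leq$ there, but $<$ is what you have), so the part of the lemma that is actually used in Lemma \ref{lem06} survives; nevertheless the first displayed inequality as stated is not established with your $\eta$. Your definition of $M_7$ differs cosmetically from the paper's $M_7=M(\eta)+M_0M_5+M_6+L_0$; both serve the stated purpose, and your reading of the side constraint $M_7>M_0M_5+M_6+L_0$ as bookkeeping for the counting argument in Section 4 is correct.
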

\begin{proof}
By Lemmas \ref{lem02}, \ref{san1} and (\ref{lambdakr}), it suffices to define
 \[
 \eta:=(4^{-1}D_1\eta_rC_7)^{1/r}\;\; {\rm and}\;\;M_7:=M(\eta)+M_0M_5+M_6+L_0.
 \]
\end{proof}

\section{A characterization of the $n$-optimal sets}

We always assume that $\alpha_n\in C_{n,r}(\mu)$ and $k$ satisfies (\ref{n+k}).
We denote by $B_\sigma$ the set of the centers of some $L_0$ balls of radii $\frac{1}{16}s_\sigma$ which are centered in $(J_\sigma)_{\frac{s_\sigma}{4}}$ and cover $(J_\sigma)_{\frac{s_\sigma}{4}}$.  We define
\[
\kappa_\sigma:={\rm card}(\alpha_n\cap (J_\sigma)_{\frac{s_\sigma}{8}}),\;\sigma\in\Lambda_{k,r}.
\]

In the following, we will use three lemmas to establish upper and lower estimates for the numbers $\kappa_\sigma,\sigma\in\Lambda_{k,r}$.
The first lemma  can be proved by using the argument in the proof for \cite[Lemma 3.1]{Zhu:20}.

\begin{lemma}\label{l0}
We have $\kappa_c:={\rm card}\big(\alpha_n\setminus\big(\bigcup_{\sigma\in\Lambda_{k,r}}(J_\sigma)_{\frac{s_\sigma}{8}}\big)\big)\leq L_0\phi_{k,r}$.
\end{lemma}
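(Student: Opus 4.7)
\textbf{Proof plan for Lemma \ref{l0}.}

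The plan is to argue by contradiction via a replacement procedure, trading the excess "useless" points of $\alpha_n$ (those far from $E$) for the covering centers $B_\sigma$ already introduced in Remark \ref{zhusanguo01}. Suppose, for contradiction, that $\kappa_c \geq L_0\phi_{k,r}+1$. Set $U:=\bigcup_{\sigma\in\Lambda_{k,r}}(J_\sigma)_{s_\sigma/8}$ and $\beta:=\bigcup_{\sigma\in\Lambda_{k,r}} B_\sigma$, so $|\beta|\leq L_0\phi_{k,r}$ by Remark \ref{zhusanguo01}. Consider the candidate quantizer
\[
\widetilde{\alpha}:=(\alpha_n\cap U)\cup\beta,
\]
which has cardinality at most $(n-\kappa_c)+L_0\phi_{k,r}\leq n-1$.

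The key step is to show that $d(x,\widetilde{\alpha})\leq d(x,\alpha_n)$ for every $x\in E$. Since $\Lambda_{k,r}$ is a maximal finite antichain, any such $x$ lies in some $J_\sigma$ with $\sigma\in\Lambda_{k,r}$. Let $a^*\in\alpha_n$ attain $d(x,\alpha_n)$. If $a^*\in U$, then $a^*\in\alpha_n\cap U\subset\widetilde\alpha$ and the inequality is immediate. Otherwise $a^*\notin(J_\sigma)_{s_\sigma/8}$, which forces $d(x,\alpha_n)=d(x,a^*)>s_\sigma/8$; on the other hand, by the defining property of $B_\sigma$ (every point of $(J_\sigma)_{s_\sigma/4}$, in particular $x\in J_\sigma$, is within $s_\sigma/16$ of some element of $B_\sigma$), we get $d(x,\widetilde\alpha)\leq d(x,B_\sigma)\leq s_\sigma/16<d(x,\alpha_n)$. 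This pointwise domination on $E$, combined with $\operatorname{supp}(\mu)=E$, yields
\[
\int d(x,\widetilde\alpha)^r\,d\mu(x)\leq \int d(x,\alpha_n)^r\,d\mu(x)=e_{n,r}^r(\mu).
\]

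To close the argument, invoke \cite[Theorem 4.12]{GL:00}: since $E$ is uncountable, $\#\operatorname{supp}(\mu)=\infty$, so the sequence $(e_{m,r}(\mu))_{m\geq 1}$ is strictly decreasing. Combined with $|\widetilde\alpha|\leq n-1$, this gives
\[
e_{n,r}^r(\mu)<e_{n-1,r}^r(\mu)\leq\int d(x,\widetilde\alpha)^r\,d\mu(x)\leq e_{n,r}^r(\mu),
\]
a contradiction. Hence $\kappa_c\leq L_0\phi_{k,r}$, as claimed. I expect the only mildly subtle point to be the case analysis in the pointwise comparison: one must be careful that when $a^*$ lies outside $U$ the distance $d(x,a^*)$ is genuinely larger than $s_\sigma/8$ (which uses $x\in J_\sigma$, not merely $x\in(J_\sigma)_{s_\sigma/8}$), so that the replacement by a point of $B_\sigma$ at distance at most $s_\sigma/16$ strictly improves the quantizer on that region.
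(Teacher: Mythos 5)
Your proof is correct and follows the standard replacement argument that the paper invokes by reference to \cite[Lemma 3.1]{Zhu:20}: swap the "distant" points of $\alpha_n$ for the $L_0\phi_{k,r}$ covering centers $\bigcup_\sigma B_\sigma$, verify the pointwise domination $d(x,\widetilde\alpha)\le d(x,\alpha_n)$ on $E$ via the case split on whether the nearest point $a^*$ lies in $\bigcup_\sigma(J_\sigma)_{s_\sigma/8}$, and contradict the strict decrease of $n\mapsto e_{n,r}(\mu)$ for a measure with infinite support. The one small caveat is purely bibliographic: the strict monotonicity of $e_{n,r}$ is not what \cite[Theorem 4.12]{GL:00} (existence of optimal sets) states, though the fact itself is standard and available elsewhere in \cite{GL:00}.
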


Using Lemmas \ref{l0} and \ref{l8}-\ref{l9}, we are able to give a lower bound for $\kappa_\sigma$ for all $\sigma\in\Lambda_{k,r}$. That is,
\begin{lemma}\label{l1}
For every $\sigma\in\Lambda_{k,r}$, we have $\kappa_\sigma\geq M_2$.
\end{lemma}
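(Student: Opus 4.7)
The plan is to argue by contradiction: suppose $\kappa_\sigma<M_2$ for some $\sigma\in\Lambda_{k,r}$ and produce a quantizer $\alpha'$ with $|\alpha'|\le n$ whose $L_r$-quantization error is strictly less than $e_{n,r}^r(\mu)$, contradicting $\alpha_n\in C_{n,r}(\mu)$. The two driving estimates are Lemma \ref{l00}, which turns the shortage at $\sigma$ into the lower bound $I_\sigma(\alpha_n,\mu)\ge\mathcal{E}_r(\sigma)e_{M_2-1,r}^r(\nu_\sigma)$, and Lemma \ref{l9}, which gives an upper bound for the error of an engineered replacement set on $A_\sigma^*$ that is strictly smaller than the marginal-gain scale $\mathcal{E}_r(\omega)(e_{M_2-1,r}^r(\nu_\omega)-e_{M_2,r}^r(\nu_\omega))$ for every $\omega\in\Lambda_{k,r}$; the freedom to choose $\omega$ on the right-hand side is what will make the contradiction possible.

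To construct $\alpha'$, first apply the counting estimate of Lemma \ref{l0} together with (\ref{n+k}): $\sum_{\omega\in\Lambda_{k,r}}\kappa_\omega\ge n-\kappa_c\ge(M_4-L_0)\phi_{k,r}=M_3\phi_{k,r}$, so by the pigeonhole principle some $\omega_0\in\Lambda_{k,r}$ satisfies $\kappa_{\omega_0}\ge M_3$, and $\omega_0\neq\sigma$ since $M_3>M_2>\kappa_\sigma$. Fix $\gamma\in C_{M_3-M_2-L_0,r}(\nu_\sigma^*)$ and choose $R\subset\alpha_n\cap(J_{\omega_0})_{s_{\omega_0}/8}$ with $|R|=\max\{0,M_3-M_2-\kappa_\sigma\}$, selecting $R$ to consist of the points with smallest Voronoi-cell contribution in $\alpha_n$. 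Define
\[
\alpha':=\bigl(\alpha_n\setminus(J_\sigma)_{s_\sigma/8}\setminus R\bigr)\cup B_\sigma\cup h_\sigma(\gamma).
\]
A direct count yields $|\alpha'|\le n$, and by construction $\kappa_{\omega_0}-|R|\ge M_2$.

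On $A_\sigma^*$, since $h_\sigma(\gamma)\subset\alpha'$, Lemma \ref{l9} applied with its free parameter specialised to $\sigma$ gives
\[
\int_{A_\sigma^*}d(x,\alpha')^r\,d\mu\le I_\sigma^*(h_\sigma(\gamma),\mu)<\mathcal{E}_r(\sigma)\bigl(e_{M_2-1,r}^r(\nu_\sigma)-e_{M_2,r}^r(\nu_\sigma)\bigr),
\]
while Lemma \ref{l00} forces $\int_{A_\sigma^*}d(x,\alpha_n)^r\,d\mu\ge I_\sigma(\alpha_n,\mu)\ge\mathcal{E}_r(\sigma)e_{M_2-1,r}^r(\nu_\sigma)$. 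Subtracting produces a strictly positive gain on $A_\sigma^*$ of at least $\mathcal{E}_r(\sigma)e_{M_2,r}^r(\nu_\sigma)$, positivity being guaranteed by Lemma \ref{lem05}. Outside $A_\sigma^*$, Remark \ref{zhusanguo1} ensures that the $B_\sigma$-replacement cannot enlarge any distance, so the only potential loss arises on the Voronoi cells (in $\alpha_n$) of the points of $R$.

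The remaining---and principal---step is to show that this loss is strictly smaller than the gain just established. Because $R$ consists of the $|R|$ least-contributing points of $\alpha_n\cap(J_{\omega_0})_{s_{\omega_0}/8}$ and $\kappa_{\omega_0}-|R|\ge M_2$, the induced error increase can be estimated by $|R|$ times the marginal cost $\mathcal{E}_r(\omega_0)(e_{M_2-1,r}^r(\nu_{\omega_0})-e_{M_2,r}^r(\nu_{\omega_0}))$, exploiting the monotone decrease of the optimal-error differences in the number of points for $l\ge M_2$. Since $|R|\le M_3-M_2$ is a fixed constant and $\mathcal{E}_r(\sigma),\mathcal{E}_r(\omega_0)$ are comparable by (\ref{lambdakr}), the factor-$1/2$ slack built into Lemma \ref{l9} via Lemmas \ref{lem02}--\ref{lem03} and the choice of $M_3$ leaves enough room to force loss $<$ gain. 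Combining the estimates gives $\int d(x,\alpha')^r\,d\mu<e_{n,r}^r(\mu)$ with $|\alpha'|\le n$, contradicting $\alpha_n\in C_{n,r}(\mu)$. This loss estimate is the main obstacle: it requires the careful choice of $R$ together with the monotonicity of marginal quantization errors.
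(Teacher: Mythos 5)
Your proposal departs from the paper's construction in one essential place, and this departure creates a genuine gap. The paper does not remove points from the heavy region and then try to quantify the resulting damage. Instead it replaces \emph{all} of $\alpha_n\cap(J_\tau)_{s_\tau/8}$ by $B_\tau\cup h_\tau(\gamma_{\kappa_\tau-L_0-M_2}(\tau))$ with $\gamma_{\kappa_\tau-L_0-M_2}(\tau)\in C_{\kappa_\tau-L_0-M_2,r}(\nu_\tau^*)$, so the error on $A_\tau^*$ after replacement is the error of an \emph{explicit optimal} quantizer for $\nu_\tau^*$, and Lemma \ref{l9} bounds that error directly by the marginal gain $\mathcal{E}_r(\sigma)\bigl(e^r_{M_2-1}(\nu_\sigma)-e^r_{M_2}(\nu_\sigma)\bigr)$ realised on $J_\sigma$ by inserting $g_\sigma(\gamma_{M_2}(\sigma))$. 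Because the removal near $\tau$ is exactly offset by the insertions, cardinality is preserved and no separate ``loss'' term ever has to be estimated.

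In your version you remove an arbitrary subset $R$ of $\alpha_n\cap(J_{\omega_0})_{s_{\omega_0}/8}$ and leave the rest of $\alpha_n$ untouched near $\omega_0$. The surviving set is not an optimal quantizer of any measure, and the resulting increase in error on $A_{\omega_0}^*$ is $\int\bigl(d(x,\alpha_n\setminus R)^r-d(x,\alpha_n)^r\bigr)d\mu$, which has nothing to do with $I_a(\alpha_n,\mu)$ for $a\in R$: a point can contribute little to $I(\alpha_n,\mu)$ yet be irreplaceable because its Voronoi cell is far from every other point. Your bound by ``$|R|$ times $\mathcal{E}_r(\omega_0)\bigl(e^r_{M_2-1,r}(\nu_{\omega_0})-e^r_{M_2,r}(\nu_{\omega_0})\bigr)$'' and the invoked ``monotone decrease of optimal-error differences'' are not established anywhere in the paper (no such convexity/monotonicity lemma is available here), and neither Lemma \ref{lem03} nor the slack in Lemma \ref{l9} gives an estimate on the error increase caused by deleting points from a fixed, non-optimal set. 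You would also need to handle the case $\omega_0\in\mathcal{A}_\sigma$ separately, since then $(J_{\omega_0})_{s_{\omega_0}/8}$ and $(J_\sigma)_{s_\sigma/8}$ can intersect and your cardinality bookkeeping is off; the paper treats $\sigma\in\mathcal{A}_\tau$ and $\sigma\notin\mathcal{A}_\tau$ as distinct cases (c1)/(c2). To repair the argument, follow the paper: do not merely delete points near $\omega_0$, but replace them wholesale with $B_{\omega_0}\cup h_{\omega_0}(\gamma)$, where $\gamma$ is an optimal set for $\nu_{\omega_0}^*$ of the right size, and then the error there is bounded by Lemma \ref{l9} without any deletion-loss argument.
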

\begin{proof}
Assume that $\kappa_\sigma<M_2$ for some $\sigma\in\Lambda_{k,r}$. By (\ref{n+k}) and Lemma \ref{l0},
\begin{eqnarray*}
\sum_{\tau\in\Lambda_{k,r}\setminus\{\sigma\}}\kappa_\tau&\geq&n-{\rm card}(\alpha_n\setminus\bigcup_{\sigma\in\Lambda_{k,r}}(J_\sigma)_{\frac{s_\sigma}{8}})-M_2\\&\geq&(M_4-L_0)\phi_{k,r}-M_2>M_3(\phi_{k,r}-1).
\end{eqnarray*}
Therefore, there exists some $\tau\in\Lambda_{k,r}\setminus\{\sigma\}$ such that $\kappa_\tau>M_3$. Let
\begin{eqnarray*}
&&\gamma_{\kappa_\tau-L_0-M_2}(\tau)\in C_{\kappa_\tau-L_0-M_2,r}(\nu_\tau^*);\;\gamma_{M_2}(\sigma)\in C_{M_2,r}(\nu_\sigma);\\
&&\beta:=\big(\alpha_n\setminus(J_\tau)_{\frac{s_\tau}{8}}\big)\cup B_\tau\cup h_\tau(\gamma_{\kappa_\tau-L_0-M_2}(\tau))\cup g_\sigma(\gamma_{M_2}(\sigma)).
\end{eqnarray*}
 Then we have ${\rm card}(\gamma)\leq n$. By Remark \ref{zhusanguo1}, we obtain,
\begin{eqnarray}\label{s01}
\sum_{\omega\in\Lambda_{k,r}\setminus(\mathcal{A}_\tau\cup\{\sigma\})}I_\omega(\beta,\mu)\leq
\sum_{\omega\in\Lambda_{k,r}\setminus(\mathcal{A}_\tau\cup\{\sigma\})}I_\omega(\alpha_n,\mu).
\end{eqnarray}
In the following, we distinguish between two cases.

Case (c1): $\sigma\notin\mathcal{A}_\tau$. In this case, we have $(J_\sigma)_{\frac{s_\sigma}{4}}\cap(J_\tau)_{\frac{s_\tau}{4}}=\emptyset$.
Note that $\kappa_\tau>M_3$. By Lemmas \ref{l00}, \ref{l9} and (\ref{lambdakr}), we deduce
\begin{eqnarray}
&&I_\sigma(\alpha_n,\mu)-I_\sigma(\beta,\mu)\nonumber\\&&\geq
I_\sigma(\alpha_n,\mu)-\int_{J_\sigma}d(x,g_\sigma(\gamma_{M_2}(\sigma))^rd\mu(x)\nonumber
\\&&\geq\mathcal{E}_r(\sigma)(e^r_{M_2-1}(\nu_\sigma)-e^r_{M_2}(\nu_\sigma))\nonumber\\&&>I_\tau^*(h_\tau(\gamma_{\kappa_\tau-L_0-M_2}(\tau)),\mu)
\label{sgzhu5}\\&&\geq I_\tau^*(\beta,\mu)\nonumber\\&&\geq I_\tau^*(\beta,\mu)-I_\tau^*(\alpha_n,\mu).\label{zhu2}
\end{eqnarray}
From (\ref{s01})-(\ref{zhu2}), we obtain that $I(\beta,\mu)<I(\alpha_n,\mu)$, a contradiction.

Case (c2): $\sigma\in\mathcal{A}_\tau$. In this case, $J_\sigma\subset A_\tau^*$.
Using (\ref{sgzhu5}) and (\ref{zhu2}), we deduce
\begin{eqnarray*}
I_\tau^*(\beta,\mu)&=&\sum_{\omega\in\mathcal{A}_\tau\setminus\{\sigma\}}I_\omega(\beta,\mu)+I_\sigma(\beta,\mu)\\&\leq&I_\tau^*(\beta,\mu)+I_\sigma(\beta,\mu)
\\&\leq&I_\tau^*(h_\tau(\gamma_{\kappa_\tau-L_0-M_2}(\tau),\mu)+I_\sigma(\beta,\mu)
\\&<&(I_\sigma(\alpha_n,\mu)-I_\sigma(\beta,\mu))
+I_\sigma(\beta,\mu)<I_\tau^*(\alpha_n,\mu).
\end{eqnarray*}
Combining this and (\ref{s01}), we deduce that $I(\beta,\mu)<I(\alpha_n,\mu)$, a contradiction.
\end{proof}

Next, by Lemmas \ref{san4}-\ref{lem04}, we establish an upper bound for $\kappa_\sigma$ for all $\sigma\in\Lambda_{k,r}$. This will be used to establish a lower bound for $\underline{J}(\alpha_n,\mu)$.

\begin{lemma}\label{lem06}
For every $\sigma\in\Lambda_{k,r}$, we have $\kappa_\sigma\leq M_7$.
\end{lemma}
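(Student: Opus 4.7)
The plan is to argue by contradiction, mirroring the structure of the proof of Lemma \ref{l1} but reversing the roles of ``too few'' and ``too many'' points. Suppose some $\sigma\in\Lambda_{k,r}$ satisfies $\kappa_\sigma>M_7$; the goal is to construct a competitor $\beta$ with ${\rm card}(\beta)\leq n$ and $I(\beta,\mu)<I(\alpha_n,\mu)$, contradicting $\alpha_n\in C_{n,r}(\mu)$. The first step is a pigeonhole argument. Because the interiors $J_\tau^\circ$, $\tau\in\Lambda_{k,r}$, are pairwise disjoint and $n<M_5\phi_{k,r}$ by \eqref{n+k} together with the remark following it,
\[
\sum_{\tau\in\Lambda_{k,r}}{\rm card}(\alpha_n\cap J_\tau^\circ)\leq n<M_5\phi_{k,r},
\]
so some $\tau\in\Lambda_{k,r}$ satisfies ${\rm card}(\alpha_n\cap J_\tau^\circ)<M_5$. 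This $\tau$ plays the role of the target cylinder that will receive the displaced points; note that $\tau$ might equal $\sigma$, and we cannot rule this out.

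To build the competitor, I would take $\gamma_1\in C_{\kappa_\sigma-L_0-M_6,\,r}(\nu_\sigma^*)$ (well defined since $\kappa_\sigma-L_0-M_6>M_7-L_0-M_6>M_0M_5>0$) and $\gamma_2\in C_{M_6,r}(\nu_\tau)$, and set
\[
\beta:=\bigl(\alpha_n\setminus(J_\sigma)_{s_\sigma/8}\bigr)\cup B_\sigma\cup h_\sigma(\gamma_1)\cup g_\tau(\gamma_2),
\]
whose cardinality is at most $(n-\kappa_\sigma)+L_0+(\kappa_\sigma-L_0-M_6)+M_6=n$. The error analysis then splits $\Lambda_{k,r}$ into three pieces. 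For $\omega\in\Lambda_{k,r}\setminus(\mathcal{A}_\sigma\cup\{\tau\})$, Remark \ref{zhusanguo1} (applied to the intermediate set $(\alpha_n\setminus(J_\sigma)_{s_\sigma/8})\cup B_\sigma$) combined with the monotonicity of the Voronoi integrand under adding points yields $I_\omega(\beta,\mu)\leq I_\omega(\alpha_n,\mu)$. For $\omega\in\mathcal{A}_\sigma\setminus\{\tau\}$, since $h_\sigma(\gamma_1)\subset\beta$ and $J_\omega\subset A_\sigma^*$, Lemma \ref{lem04} (applied with target cylinder $\tau$) gives
\[
\sum_{\omega\in\mathcal{A}_\sigma\setminus\{\tau\}}I_\omega(\beta,\mu)\leq I_\sigma^*\bigl(h_\sigma(\gamma_1),\mu\bigr)<\tfrac{1}{2}C_7\mathcal{E}_r(\tau).
\]
For $\omega=\tau$, using $g_\tau(\gamma_2)\subset\beta$ and Lemma \ref{san4} we get $I_\tau(\beta,\mu)\leq\mathcal{E}_r(\tau)e_{M_6,r}^r(\nu_\tau)<\tfrac{1}{2}C_7\mathcal{E}_r(\tau)$, whereas Lemma \ref{san3} applied to $\tau$ (whose hypothesis is secured by the pigeonhole step) yields $I_\tau(\alpha_n,\mu)\geq C_7\mathcal{E}_r(\tau)$. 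Summing the three contributions, the strict gain at $\tau$ (strictly exceeding $\tfrac12 C_7\mathcal{E}_r(\tau)$) dominates any possible loss on $\mathcal{A}_\sigma\setminus\{\tau\}$ (strictly less than $\tfrac12 C_7\mathcal{E}_r(\tau)$), producing $I(\alpha_n,\mu)>I(\beta,\mu)$, the desired contradiction.

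The main obstacle is the case $\tau\in\mathcal{A}_\sigma$, and in particular the unavoidable possibility $\tau=\sigma$, where the two added clusters $h_\sigma(\gamma_1)$ and $g_\tau(\gamma_2)$ both live inside $A_\sigma^*$ and apparently compete on the same region. The resolution is to pull $\tau$ out of the $\mathcal{A}_\sigma$-sum and to control the remainder by $I_\sigma^*(h_\sigma(\gamma_1),\mu)$ via Lemma \ref{lem04} while treating the $\tau$-term by Lemma \ref{san4}; each bound is at most $\tfrac12 C_7\mathcal{E}_r(\tau)$, so that their sum remains strictly below $C_7\mathcal{E}_r(\tau)\leq I_\tau(\alpha_n,\mu)$. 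When $\tau=\sigma$, the bound in Lemma \ref{lem04} still applies (the restriction $\omega\neq\sigma$ in its statement is inessential: the key step $\mathcal{E}_r(\sigma)<\eta_r^{-1}\mathcal{E}_r(\omega)$ holds for every $\omega\in\Lambda_{k,r}$, including $\omega=\sigma$, because $\eta_r<1$), so the same inequalities close the argument in both cases.
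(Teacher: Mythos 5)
Your proof is correct, and the overall strategy (contradiction by building a strictly better competitor $\beta$) matches the paper's. The one genuine divergence is the pigeonhole. You use only $n<M_5\phi_{k,r}$ to find some $\tau\in\Lambda_{k,r}$ with ${\rm card}(\alpha_n\cap J_\tau^\circ)<M_5$, and then work to handle the possibility $\tau\in\mathcal{A}_\sigma$ (including $\tau=\sigma$). The paper instead exploits the fact that $M_7>M_0M_5$ --- this is precisely why $M_0M_5$ was built into the definition of $M_7$ in Lemma \ref{lem04}. Since $(J_\omega)_{s_\omega/4}\cap(J_\sigma)_{s_\sigma/4}=\emptyset$ for $\omega\notin\mathcal{A}_\sigma$, the $\kappa_\sigma>M_0M_5$ points in $(J_\sigma)_{s_\sigma/8}$ do not lie in any such $J_\omega^\circ$, so
\[
\sum_{\omega\in\Lambda_{k,r}\setminus\mathcal{A}_\sigma}{\rm card}(\alpha_n\cap J_\omega^\circ)\leq n-\kappa_\sigma<(\phi_{k,r}-M_0)M_5,
\]
and since ${\rm card}(\Lambda_{k,r}\setminus\mathcal{A}_\sigma)\geq\phi_{k,r}-M_0$, the recipient $\tau$ can always be chosen in $\Lambda_{k,r}\setminus\mathcal{A}_\sigma$. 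This cleanly separates donor and recipient and lets the final chain of inequalities go through with no case analysis.

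That said, your treatment of the overlapping case is valid: the split $\Lambda_{k,r}=(\Lambda_{k,r}\setminus(\mathcal{A}_\sigma\cup\{\tau\}))\cup(\mathcal{A}_\sigma\setminus\{\tau\})\cup\{\tau\}$ is a genuine partition, the bound $\sum_{\omega\in\mathcal{A}_\sigma\setminus\{\tau\}}I_\omega(\beta,\mu)\leq I_\sigma^*(h_\sigma(\gamma_1),\mu)<\tfrac12 C_7\mathcal{E}_r(\tau)$ is correct, and your observation that Lemma \ref{lem04} holds verbatim for $\omega=\sigma$ is right (the derivation only uses $\eta_r\mathcal{E}_r(\sigma)\leq\mathcal{E}_r(\omega)$, valid for all $\omega\in\Lambda_{k,r}$ including $\omega=\sigma$). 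So your argument works, and in fact shows the $M_0M_5$ padding in $M_7$ is not strictly needed. The author's route is the intended one (it's why that padding is there); yours is a slightly more robust alternative at the cost of a visible case split.
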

\begin{proof}
Assume that $\kappa_\sigma>M_7$ for some $\sigma\in\Lambda_{k,r}$. Next, we will deduce a contradiction. Note that $M_7>M_0M_5$ and $n<M_5\phi_{k,r}$. Further, for every $\tau\in\Lambda_{k,r}\setminus\mathcal{A}_\sigma$, we have $(J_\tau)_{\frac{s_\tau}{4}}\cap(J_\sigma)_{\frac{s_\sigma}{4}}=\emptyset$. Thus,
\[
{\rm card}\bigg(\alpha_n\cap\bigcup_{\tau\in\Lambda_{k,r}\setminus\mathcal{A}_\sigma}(J_\tau)_{\frac{s_\tau}{8}}\bigg)<(\phi_{k,r}-M_0)M_5.
\]
Since ${\rm card}(\mathcal{A}_\sigma)\leq M_0$ (Lemma \ref{l6}), we obtain
${\rm card}(\Lambda_{k,r}\setminus\mathcal{A}_\sigma)\geq \phi_{k,r}-M_0$.
Further, for distinct words $\tau,\rho\in\Lambda_{k,r}$, we have $J_\tau^\circ\cap J_\rho^\circ=\emptyset$. Thus, there exists some $\tau\in\Lambda_{k,r}\setminus\mathcal{A}_\sigma$ such that
${\rm card}(\alpha\cap J_\tau^\circ)<M_5$. Let $\gamma_{M_6}(\tau)\in C_{M_6,r}(\nu_\tau)$ and
\begin{eqnarray*}
\beta:=\big(\alpha_n\setminus(J_\sigma)_{\frac{s_\sigma}{8}}\big)\cup B_\sigma\cup h_\sigma(\gamma_{\kappa_\sigma-L_0-M_6}(\sigma))\cup g_\sigma(\gamma_{M_6}(\tau)).
\end{eqnarray*}
Then ${\rm card}(\beta)\leq n$. Again, by Remark \ref{zhusanguo1}, we have
\begin{eqnarray}\label{sanguo1}
\sum_{\omega\in\Lambda_{k,r}\setminus (\mathcal{A}_\sigma\cup \{\tau\})}I(\beta,\mu)\leq\sum_{\omega\in\Lambda_{k,r}\setminus (\mathcal{A}_\sigma\cup \{\tau\})}I(\alpha_n,\mu).
\end{eqnarray}
By Lemmas \ref{san3}-\ref{lem04}, we deduce
\begin{eqnarray*}
I_\tau(\alpha_n,\mu)-I_\tau(\beta,\mu)&\geq& I_\tau(\alpha_n,\mu)-I_\tau(g_\sigma(\gamma_{M_6}(\tau)),\mu)> 2^{-1}C_7\mathcal{E}_r(\tau)\\&>&
I_\sigma^*(h_\sigma(\gamma_{\kappa_\sigma-L_0-M_6}(\sigma)),\mu)\geq I_\sigma^*(\beta,\mu)\\&\geq& I_\sigma^*(\beta,\mu)-I_\sigma^*(\alpha_n,\mu).
\end{eqnarray*}
Using this and (\ref{sanguo1}), we obtain $I(\beta,\mu)<I(\alpha_n,\mu)$, a contradiction.
\end{proof}

Next we give an estimate for the distance between $\alpha_n$ and an arbitrary point in $J_\sigma\cap E$. The integer $M_1$ is defined mainly for this purpose.
\begin{lemma}\label{l12}
For every $\sigma\in\Lambda_{k,r}$, we have
$\sup\limits_{x\in J_\sigma\cap E}d(x,\alpha_n)\leq\frac{s_\sigma}{8}$. In particular,
\[
\alpha_n\subset\bigcup_{\sigma\in\Lambda_{k,r}}(J_\sigma)_{\frac{s_\sigma}{8}};\;\;d(x,\alpha_n)=d(x,\alpha_n\cap(J_\sigma)_{\frac{s_\sigma}{8}})\;{\rm for}\;\;x\in J_\sigma\cap E.
\]
\end{lemma}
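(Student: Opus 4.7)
The plan is to argue by contradiction: suppose there exist $\sigma\in\Lambda_{k,r}$ and $x_0\in J_\sigma\cap E$ with $d(x_0,\alpha_n)>s_\sigma/8$. By Lemma \ref{l7}, this forces $I_\sigma(\alpha_n,\mu)\geq C_6\mathcal{E}_r(\sigma)$. I will construct a competitor $\beta$ with $\mathrm{card}(\beta)\leq n$ and $I(\beta,\mu)<I(\alpha_n,\mu)$, contradicting the optimality of $\alpha_n$.

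The construction redistributes the $\kappa_\sigma$ points of $\alpha_n$ lying in $(J_\sigma)_{s_\sigma/8}$ into a configuration adapted to $\nu_\sigma^*$. By Lemma \ref{l1}, $\kappa_\sigma\geq M_2=M_1+L_0$, hence $\kappa_\sigma-L_0\geq M_1$; choose $\delta\in C_{\kappa_\sigma-L_0,r}(\nu_\sigma^*)$ and set
\[
\beta:=\big(\alpha_n\setminus (J_\sigma)_{s_\sigma/8}\big)\cup B_\sigma\cup h_\sigma(\delta),
\]
so that $\mathrm{card}(\beta)\leq n$. Remark \ref{zhusanguo1} then yields $I_\omega(\beta,\mu)\leq I_\omega(\alpha_n,\mu)$ for every $\omega\in\Lambda_{k,r}\setminus\mathcal{A}_\sigma$. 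On the remaining region $A_\sigma^*$, the bound $d(x,\beta)\leq d(x,h_\sigma(\delta))$, together with a change of variables through $h_\sigma$, Lemma \ref{san1}, and Lemma \ref{l8}, gives
\[
I_\sigma^*(\beta,\mu)\leq\mathcal{E}_r^*(\sigma)\,e_{\kappa_\sigma-L_0,r}^r(\nu_\sigma^*)<D_1^{-1}\mathcal{E}_r(\sigma)\cdot D_1C_6=C_6\mathcal{E}_r(\sigma),
\]
while $I_\sigma^*(\alpha_n,\mu)\geq I_\sigma(\alpha_n,\mu)\geq C_6\mathcal{E}_r(\sigma)$. Summing over $\Lambda_{k,r}$ produces $I(\alpha_n,\mu)-I(\beta,\mu)>0$, the desired contradiction.

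For the ``in particular'' statements: any $x\in J_\sigma\cap E$ has its nearest optimal point $a\in\alpha_n$ at distance $d(x,a)=d(x,\alpha_n)\leq s_\sigma/8$, hence $a\in B(x,s_\sigma/8)\subset(J_\sigma)_{s_\sigma/8}$, which yields $d(x,\alpha_n)=d(x,\alpha_n\cap(J_\sigma)_{s_\sigma/8})$. To rule out a stray $a\in\alpha_n$ falling outside $\bigcup_\sigma(J_\sigma)_{s_\sigma/8}$, invoke the standard optimality fact that every Voronoi cell of an optimal $n$-quantizer has positive $\mu$-measure (a consequence of the strict monotonicity $e_{n-1,r}(\mu)>e_{n,r}(\mu)$, valid since $\mathrm{supp}(\mu)=E$ is infinite): the cell $P_a(\alpha_n)$ then contains some $x\in E\cap J_\sigma$ for some $\sigma\in\Lambda_{k,r}$, forcing $a\in(J_\sigma)_{s_\sigma/8}$ by the same argument.

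The principal obstacle is producing the strict inequality $I_\sigma^*(\beta,\mu)<C_6\mathcal{E}_r(\sigma)$ needed to overcome the only weakly-strict lower bound $I_\sigma(\alpha_n,\mu)\geq C_6\mathcal{E}_r(\sigma)$ from Lemma \ref{l7}; the factor $D_1<1$ built into Lemma \ref{l8} supplies precisely the slack required after cancellation with $D_1^{-1}$ from Lemma \ref{san1}, so that the thresholds $M_1$, $M_2$, $L_0$ conspire to make the replacement $\beta$ strictly better than $\alpha_n$.
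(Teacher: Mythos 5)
Your proposal is correct and follows essentially the same route as the paper: a proof by contradiction using Lemma \ref{l7} for the lower bound $I_\sigma(\alpha_n,\mu)\geq C_6\mathcal{E}_r(\sigma)$, the competitor $\beta=(\alpha_n\setminus(J_\sigma)_{s_\sigma/8})\cup B_\sigma\cup h_\sigma(\gamma)$ with $\gamma$ optimal for $\nu_\sigma^*$ of cardinality $\kappa_\sigma-L_0\geq M_1$, Remark \ref{zhusanguo1} to control the region outside $\mathcal{A}_\sigma$, and Lemmas \ref{san1} and \ref{l8} (whose second clause you unpack by hand) to get the strict bound $I_\sigma^*(\beta,\mu)<C_6\mathcal{E}_r(\sigma)$. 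Your extra paragraph justifying $\alpha_n\subset\bigcup_\sigma(J_\sigma)_{s_\sigma/8}$ via positivity of the $\mu$-mass of optimal Voronoi cells is a welcome clarification of a step the paper leaves implicit.
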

\begin{proof}
Assume that, $d(x,\alpha_n)>8^{-1}s_\sigma$ for some $\sigma\in\Lambda_{k,r}$ and $x\in J_\sigma\cap E$. Next, we deuce a contradiction. By the assumption and Lemma \ref{l7}, we have
\begin{eqnarray}\label{s1}
I_\sigma^*(\alpha_n,\mu)\geq I_\sigma(\alpha_n,\mu)\geq C_6\mathcal{E}_r(\sigma).
\end{eqnarray}
Let $\gamma_{\kappa_\sigma-L_0}\in C_{\kappa_\sigma-L_0,r}(\nu_\sigma^*)$. We define a set $\beta$ with ${\rm card}(\beta)\leq n$:
\[
\beta:=\big(\alpha_n\setminus (J_\sigma)_{\frac{s_\sigma}{8}}\big)\cup B_\sigma\cup h_\sigma(\gamma_{\kappa_\sigma-L_0}).
 \]
By Remark \ref{zhusanguo1}, we obtain
\begin{eqnarray}\label{s2}
\sum_{\tau\in\Lambda_{k,r}\setminus\mathcal{A}_\sigma}I_\tau(\beta,\mu)\leq
\sum_{\tau\in\Lambda_{k,r}\setminus\mathcal{A}_\sigma}I_\tau(\alpha_n,\mu).
\end{eqnarray}
By Lemma \ref{l1}, we have, $\kappa_\sigma-L_0\geq  M_1$. Thus, by Lemma \ref{l8}, we obtain
\begin{eqnarray}\label{s3}
I_\sigma^*(\beta,\mu)<C_6\mathcal{E}_r(\sigma)\leq I_\sigma^*(\alpha_n,\mu).
\end{eqnarray}
Combining (\ref{s1})-(\ref{s2}), we obtain that $I(\alpha_n,\mu)>I(\beta,\mu)$. This contradicts the optimality of $\alpha_n$ and the lemma follows.
\end{proof}

\section{Proof of the main result}

As in section 4, we assume that $\alpha_n\in C_{n,r}(\mu)$, and $k$ satisfies (\ref{n+k}). Let $\{P_a(\alpha_n)\}_{a\in\alpha_n}$ be a VP with respect to $\alpha_n$ The following lemma gives a characterization for the geometric structure of the elements of  $\{P_a(\alpha_n)\}_{a\in\alpha_n}$.

\begin{lemma}\label{prop1}
For every $a\in\alpha_n$ and $\sigma\in\Lambda_{k,r}$, we have
\begin{eqnarray}
&&{\rm card}(\{\sigma\in\Lambda_{k,r}: P_a(\alpha)\cap J_\sigma\cap E\neq\emptyset\})\leq M_0;\label{sz6}\\
&&{\rm card}(\{a\in\alpha_n: P_a(\alpha)\cap J_\sigma\cap E\neq\emptyset\})\leq M_7.\label{sz7}
\end{eqnarray}
\end{lemma}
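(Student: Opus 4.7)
The plan is to use Lemma \ref{l12} to localize: whenever a point $a \in \alpha_n$ has $P_a(\alpha_n) \cap J_\sigma \cap E \neq \emptyset$, the point $a$ must in fact lie in the $\frac{s_\sigma}{8}$-neighborhood of $J_\sigma$. Granting this reduction, (\ref{sz7}) will follow immediately from the upper bound $\kappa_\sigma \leq M_7$ established in Lemma \ref{lem06}, while (\ref{sz6}) will follow from the fact that the overlap family $\mathcal{A}_\sigma$ defined in (\ref{sgzhu1}) has cardinality bounded uniformly by $M_0$ (Lemma \ref{l6}).

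To establish the localization, I would fix such an $a$, $\sigma$, and any $x \in P_a(\alpha_n) \cap J_\sigma \cap E$. By the defining property of the Voronoi partition, $d(x,a) = d(x,\alpha_n)$; by Lemma \ref{l12}, the right-hand side is at most $\frac{s_\sigma}{8}$. Consequently $a \in B(x, \frac{s_\sigma}{8}) \subset (J_\sigma)_{\frac{s_\sigma}{8}}$. For (\ref{sz7}), this shows that the set of $a$'s in question is contained in $\alpha_n \cap (J_\sigma)_{\frac{s_\sigma}{8}}$, whose cardinality is precisely $\kappa_\sigma$, which is bounded by $M_7$ according to Lemma \ref{lem06}.

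For (\ref{sz6}), I would fix $a \in \alpha_n$ and let $\mathcal{S}_a$ denote the set of $\sigma \in \Lambda_{k,r}$ with $P_a(\alpha_n) \cap J_\sigma \cap E \neq \emptyset$. If $\mathcal{S}_a = \emptyset$ there is nothing to prove; otherwise fix any $\sigma_0 \in \mathcal{S}_a$. For every $\sigma \in \mathcal{S}_a$ the localization yields $a \in (J_\sigma)_{\frac{s_\sigma}{8}} \cap (J_{\sigma_0})_{\frac{s_{\sigma_0}}{8}} \subset (J_\sigma)_{\frac{s_\sigma}{4}} \cap (J_{\sigma_0})_{\frac{s_{\sigma_0}}{4}}$, so the latter intersection is nonempty and (\ref{sgzhu1}) forces $\sigma \in \mathcal{A}_{\sigma_0}$. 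Hence $\mathcal{S}_a \subset \mathcal{A}_{\sigma_0}$, and Lemma \ref{l6} gives ${\rm card}(\mathcal{S}_a) \leq M_{\sigma_0} \leq M_0$.

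I expect no serious obstacle here: the substantive work has already been carried out earlier in the paper. The delicate containment $\alpha_n \subset \bigcup_{\sigma \in \Lambda_{k,r}} (J_\sigma)_{\frac{s_\sigma}{8}}$ is the content of Lemma \ref{l12}, and the combinatorial control on the overlap family is the content of Lemma \ref{l6}; this proposition simply packages these two facts via the elementary observation above.
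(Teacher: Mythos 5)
Your proof is correct and relies on the same ingredients as the paper's: Lemma \ref{l12} for the localization of $\alpha_n$ in the $\frac{s_\sigma}{8}$-neighborhoods, Lemma \ref{l6} for the uniform bound $M_0$ on $\mathcal{A}_\sigma$, and Lemma \ref{lem06} for the bound $\kappa_\sigma\leq M_7$. Your localization step (if $P_a(\alpha_n)\cap J_\sigma\cap E\neq\emptyset$ then $a\in(J_\sigma)_{\frac{s_\sigma}{8}}$, obtained directly from the Voronoi property together with $d(x,\alpha_n)\leq\frac{s_\sigma}{8}$) is a slightly more direct rendering of the paper's argument, which instead starts from $a\in(J_{\sigma_0})_{\frac{s_{\sigma_0}}{8}}$ and shows that points of $J_\omega\cap E$ for $\omega\notin\mathcal{A}_{\sigma_0}$ are strictly closer to $\alpha_n$ than to $a$, hence cannot lie in $P_a(\alpha_n)$. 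The two arguments are logically equivalent, and the essential content is the same.
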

\begin{proof}
Let $a$ be an arbitrary point of $\alpha_n$. Then by Lemma \ref{l12}, there exists some $\sigma\in\Lambda_{k,r}$ such that $a\in(J_\sigma)_{\frac{s_\sigma}{8}}$. Note that for every $\omega\in\Lambda_{k,r}\setminus\mathcal{A}_\sigma$, we have $(J_\omega)_{\frac{s_\omega}{4}}\cap(J_\sigma)_{\frac{s_\sigma}{4}}=\emptyset$.
 Hence, for every $\omega\in\Lambda_{k,r}\setminus\mathcal{A}_\sigma$ and every $x\in J_\omega$, we have $d(x,a)>\frac{s_\omega}{8}$. By Lemma \ref{l12},
we obtain that, $P_a(\alpha_n)\cap E\subset A_\sigma^*$ and (\ref{sz6}) follows.  (\ref{sz7}) is an easy consequence of Lemmas \ref{lem06} and \ref{l12}.
\end{proof}

Using Lemma \ref{prop1} and \cite[Theorem 4.1]{GL:00}, we are able to reduce the quantization problem with respect to an arbitrarily large $n$ to that with respect to some bounded numbers. We need to consider the union of some bounded number of elements of $\{P_a(\alpha_n)\}_{a\in\alpha_n}$. Let $a$ be an arbitrary point in $\alpha_n$. Then $a\in(J_\sigma)_{\frac{s_\sigma}{8}}$ for some $\sigma\in\Lambda_{k,r}$. We define
\[
\Gamma(a):=\alpha_n\cap\bigcup_{\omega\in\mathcal{A}_\sigma}(J_\omega)_{\frac{s_\omega}{8}};\;\;T_a:={\rm card}(\Gamma(a)).
\]
By Lemma \ref{lem06}, $T_a\leq M_0M_7=:M_8$. By Lemma \ref{prop1}, we obtain
\begin{equation}\label{union}
H(a):=\bigcup_{b\in\Gamma(a)}P_b(\alpha_n)\cap E\subset\bigcup_{\omega\in\mathcal{A}_\sigma}A_\omega^*.
\end{equation}
Let $\sigma\in\Lambda_{k,r}$ and $\omega_i,1\leq i\leq k_6(\leq M_0)$ be an enumeration of $\mathcal{A}_\sigma$. For every $1\leq i\leq k_6$, let $F_i$ be a subset of $A_{\omega_i}^*\setminus J_{\omega_i}$. We write
\begin{eqnarray}\label{sgz3}
R_\sigma^*:=A_\sigma^*\cup\bigg(\bigcup_{i=1}^{k_6}F_i\bigg).
\end{eqnarray}
One can see that $H(a)=R_\sigma^*$ for some $k_6$ and some choice of $(F_i)_{1\leq i\leq k_6}$ and
\begin{equation}\label{temp1}
P_a(\alpha_n)\cap E\subset H(a);\;\;P_a(\alpha_n)\cap E=P_a(\Gamma(a))\cap E.
\end{equation}

In order to obtain a lower estimate for $\underline{J}(\alpha_n,\mu)$, we need to consider the conditional measure of $\mu$ on $R_\sigma^*$ and apply  \cite[Lemma 2.4]{Zhu:20}.
For this reason, we select an arbitrary similitude $f_\sigma$ of similarity ratio $|R_\sigma^*|$ and define
\begin{equation}\label{sgz4}
\lambda_\sigma^*:=\mu(\cdot|R_\sigma^*)\circ f_\sigma.
\end{equation}
By (\ref{sgzhu1}) and Lemma \ref{l4}, one may find a constant $C_8$ such that
\begin{eqnarray}\label{sanguo2}
\max_{1\leq i\leq k_6}|R_\sigma^*|A_{\omega_i}|^{-1}\leq\max_{1\leq i\leq k_6}|R_\sigma^*|s_{\omega_i}|^{-1}\leq C_8.
\end{eqnarray}

\begin{lemma}\label{lem08}
Let $t$ be as given in Lemma \ref{lem01}. There exists a constant $C_9>0$ such that, for every $\sigma\in\Lambda_{k,r}$ and every $\epsilon>0$, we have
\[
\sup_{x\in\mathbb{R}^q}\lambda_\sigma^*(B(x,\epsilon))\leq C_9\epsilon^t.
\]
\end{lemma}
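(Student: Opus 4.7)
The plan is to mimic the proof of Lemma \ref{lemma1}, but one ``layer'' deeper, since $R_\sigma^*$ involves not only the cylinders in $\mathcal{A}_\sigma$ but also subsets of the second-generation neighborhoods $A_{\omega_i}^*$, $\omega_i\in\mathcal{A}_\sigma$. First, I would observe that by the definitions (\ref{sgzhu1}) and (\ref{sgz3}),
\[
R_\sigma^*\subset\bigcup_{\omega\in\mathcal{A}_\sigma}A_\omega^*=\bigcup_{\omega\in\mathcal{A}_\sigma}\bigcup_{\tau\in\mathcal{A}_\omega}J_\tau=:\bigcup_{\tau\in\mathcal{B}_\sigma}J_\tau,
\]
where $\mathcal{B}_\sigma\subset\Lambda_{k,r}$ and, by Lemma \ref{l6}, $\mathrm{card}(\mathcal{B}_\sigma)\leq M_0^2$. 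Two successive applications of Lemma \ref{l4} then yield constants $\widetilde{C}_2,\widetilde{C}_3$, independent of $\sigma$ and $k$, such that for every $\tau\in\mathcal{B}_\sigma$,
\[
\widetilde{C}_2 s_\sigma\leq s_\tau\leq\widetilde{C}_2^{-1}s_\sigma,\qquad \widetilde{C}_3\mu(J_\sigma)\leq\mu(J_\tau)\leq\widetilde{C}_3^{-1}\mu(J_\sigma).
\]

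Next I would control the two scale ratios that enter the estimate. Since $R_\sigma^*$ is contained in the union of at most $M_0^2$ cylinders each of diameter at most $\widetilde{C}_2^{-1}s_\sigma$, and since $\sigma\in\mathcal{A}_\sigma$ gives $|R_\sigma^*|\geq|J_\sigma|=s_\sigma$, we get a two-sided bound $|R_\sigma^*|\asymp s_\sigma$, and consequently $|R_\sigma^*|/s_\tau$ is bounded above by a constant $\widetilde{C}_8$ depending only on $M_0$ and $\widetilde{C}_2$. On the measure side, $\mu(R_\sigma^*)\geq\mu(J_\sigma)$, so for each $\tau\in\mathcal{B}_\sigma$ we have $\mu(J_\tau)/\mu(R_\sigma^*)\leq\widetilde{C}_3^{-1}$.

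Now I would decompose the ball probability exactly as in (\ref{zhu4}). For any $x\in\mathbb{R}^q$ and $\epsilon>0$, using (\ref{sgz4}) and Lemma \ref{l100},
\[
\lambda_\sigma^*(B(x,\epsilon))
=\frac{1}{\mu(R_\sigma^*)}\sum_{\tau\in\mathcal{B}_\sigma}\mu\bigl(B(f_\sigma(x),|R_\sigma^*|\epsilon)\cap J_\tau\bigr)
=\frac{1}{\mu(R_\sigma^*)}\sum_{\tau\in\mathcal{B}_\sigma}\mu(J_\tau)\,\nu_\tau\!\bigl(B(g_\tau^{-1}\!\circ\! f_\sigma(x),\,s_\tau^{-1}|R_\sigma^*|\epsilon)\bigr).
\]
Applying Lemma \ref{lem01} to each summand, each $\nu_\tau$-ball has measure at most $C_4(s_\tau^{-1}|R_\sigma^*|\epsilon)^t\leq C_4\widetilde{C}_8^t\epsilon^t$. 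Combining with the bound $\mu(J_\tau)/\mu(R_\sigma^*)\leq\widetilde{C}_3^{-1}$ and $\mathrm{card}(\mathcal{B}_\sigma)\leq M_0^2$ gives
\[
\lambda_\sigma^*(B(x,\epsilon))\leq M_0^2\,\widetilde{C}_3^{-1}C_4\widetilde{C}_8^t\,\epsilon^t=:C_9\,\epsilon^t,
\]
which is the desired bound with a constant $C_9$ independent of $\sigma$ and $k$.

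There is no real obstacle here; the only point that requires mild care is to ensure all comparability constants in steps one and two truly depend only on the global data $(M_0,C_2,C_3,q,\delta,\underline{s},\overline{s},D,k_0)$ and not on $\sigma$ or $k$, which is automatic from Lemmas \ref{l4} and \ref{l6}. The estimate (\ref{sanguo2}) is not strictly needed in this form, but it is a shortcut for the step $|R_\sigma^*|/s_\tau\leq\widetilde{C}_8$ above and can be used instead.
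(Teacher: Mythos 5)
Your argument is correct, and it takes a genuinely different route from the paper's. The paper splits $R_\sigma^*$ along its defining decomposition into the first-level cylinders $J_{\omega_i}$ (the pieces of $A_\sigma^*$) and the leftover pieces $F_i$, then estimates the first group via Lemma \ref{lem01} (the H\"older bound for $\nu_{\omega_i}$) and the second group by enlarging $F_i$ to $A_{\omega_i}^*$ and invoking Lemma \ref{lemma1} (the H\"older bound for $\nu_{\omega_i}^*$). You instead flatten everything one level further: you cover $R_\sigma^*$ by the second-generation cylinders $J_\tau$, $\tau\in\mathcal{B}_\sigma:=\bigcup_{\omega\in\mathcal{A}_\sigma}\mathcal{A}_\omega$, which are pairwise incomparable elements of $\Lambda_{k,r}$, so by Lemma \ref{l100} the covering contributes no overcounting; you then apply only Lemma \ref{lem01}, uniformly, to each of the at most $M_0^2$ cylinders. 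This buys you a more uniform argument that dispenses with Lemma \ref{lemma1} in this proof, and the comparability constants come out of two applications of Lemma \ref{l4} exactly as you describe, all independent of $\sigma$ and $k$.

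One small slip: your middle display states
\[
\lambda_\sigma^*(B(x,\epsilon))=\frac{1}{\mu(R_\sigma^*)}\sum_{\tau\in\mathcal{B}_\sigma}\mu\bigl(B(f_\sigma(x),|R_\sigma^*|\epsilon)\cap J_\tau\bigr),
\]
but $R_\sigma^*$ is in general a proper subset of $\bigcup_{\tau\in\mathcal{B}_\sigma}J_\tau$ (the sets $F_i$ need not exhaust $A_{\omega_i}^*\setminus J_{\omega_i}$), so this should be an inequality $\leq$. Since you are proving an upper bound this changes nothing, but it should be marked as an inequality. With that corrected, the proof is sound.
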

\begin{proof}
Let $\epsilon>0$ and $x\in\mathbb{R}^q$. By (\ref{sgz4}) and Lemma \ref{l100}, we have
\begin{eqnarray}\label{sz90}
&&\lambda_\sigma^*(B(x,\epsilon))=\frac{1}{\mu(R_\sigma^*)}\mu(B(f_\sigma(x),|R_\sigma^*|\epsilon)\cap R_\sigma^*)\nonumber\\
&&\;\;\;\;=\frac{\sum_{i=1}^{k_6}\mu(B(f_\sigma(x),|R_\sigma^*|\epsilon)\cap J_{\omega_i})
+\sum_{i=1}^{k_6}\mu(B(f_\sigma(x),|R_\sigma^*|\epsilon)\cap F_i)}{\sum_{i=1}^{k_6}\mu(J_{\omega_i})+\sum_{i=1}^{k_6}\mu(F_i)}\nonumber\\
&&\;\;\;\;\leq\sum_{i=1}^{k_6}\frac{\mu(B(f_\sigma(x),|R_\sigma^*|\epsilon)\cap J_{\omega_i})
}{\mu(J_{\omega_i})}+\sum_{i=1}^{k_6}\frac{\mu(B(f_\sigma(x),|R_\sigma^*|\epsilon)\cap F_i)
}{\mu(J_{\omega_i})}.
\end{eqnarray}
Thus, by (\ref{sz90}) and  Lemmas \ref{l4} and \ref{l6}, we further deduce
\begin{eqnarray}\label{sz9}
\lambda_\sigma^*(B(x,\epsilon))&\leq&\sum_{i=1}^{k_6}\nu_{\omega_i}(B((g_{\omega_i}^{-1}\circ f_\sigma(x),s_{\omega_i}^{-1}|R_\sigma^*|\epsilon))\nonumber\\&&
+\frac{M_0}{C_3}\sum_{i=1}^{k_6}\nu_{\omega_i}^*(B(h_{\omega_i}^{-1}\circ f_\sigma(x),|A_{\omega_i}^*|^{-1}|R_\sigma^*|\epsilon)\cap A_{\omega_i}^*).
\end{eqnarray}
Note that $C_4\leq C_5$ and $k_6\leq M_0$. By (\ref{sanguo2}), (\ref{sz9}), Lemmas \ref{lem01}, \ref{lemma1},  we obtain
\[
\lambda_\sigma^*(B(x,\epsilon))\leq 2M_0^2C_5C_3^{-1}C_8^t\epsilon^t.
\]
It is sufficient to define $C_9:=2M_0^2C_5C_3^{-1}C_8^t$.
\end{proof}

For two $\mathbb{R}$-valued variables $X,Y$, we write $X\lesssim Y$ ($X\gtrsim Y$) if there exists some constant $T$ such that $X\leq TY$ ($X\geq TY$). Our next lemma provides us with estimates for $e_{n,r}(\mu)$ in terms of $\mathcal{E}_r(\sigma),\sigma\in\Lambda_{k,r}$.
\begin{lemma}\label{lem07}
We have $e^r_{n,r}(\mu)\asymp\sum_{\sigma\in\Lambda_{k,r}}\mathcal{E}_r(\sigma)\asymp\phi_{k,r}\eta_r^k$.
\end{lemma}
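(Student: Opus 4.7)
The plan is to establish the two equivalences separately, starting with the purely combinatorial one. For the asymptotic $\sum_{\sigma \in \Lambda_{k,r}} \mathcal{E}_r(\sigma) \asymp \phi_{k,r} \eta_r^k$, observe that for every $\sigma \in \Lambda_{k,r}$ the defining condition (\ref{lambdakr}) gives $\mathcal{E}_r(\sigma) < \eta_r^k$, while Lemma \ref{l2} together with the choice $\eta_r \leq p \underline{s}^r$ yields $\mathcal{E}_r(\sigma) \geq p \underline{s}^r \mathcal{E}_r(\sigma^-) \geq \eta_r \mathcal{E}_r(\sigma^-) \geq \eta_r^{k+1}$. Summing over the $\phi_{k,r}$ words in $\Lambda_{k,r}$ produces
\[
\phi_{k,r} \eta_r^{k+1} \leq \sum_{\sigma \in \Lambda_{k,r}} \mathcal{E}_r(\sigma) < \phi_{k,r} \eta_r^k,
\]
which is the second equivalence.

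For the first equivalence $e^r_{n,r}(\mu) \asymp \sum_\sigma \mathcal{E}_r(\sigma)$, I would first reduce to a cylinder-by-cylinder comparison. Since $E \subset \bigcup_{\sigma \in \Lambda_{k,r}} J_\sigma$ and $\mu(J_\sigma \cap J_\omega) = 0$ for distinct $\sigma,\omega \in \Lambda_{k,r}$ by Lemma \ref{l100}, we have
\[
e^r_{n,r}(\mu) = \int d(x,\alpha_n)^r \, d\mu(x) = \sum_{\sigma \in \Lambda_{k,r}} I_\sigma(\alpha_n, \mu),
\]
so it suffices to establish $I_\sigma(\alpha_n, \mu) \asymp \mathcal{E}_r(\sigma)$ with constants independent of $\sigma$ and $n$.

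For the upper estimate, Lemma \ref{l1} guarantees that $\alpha^{(\sigma)} := \alpha_n \cap (J_\sigma)_{s_\sigma/8}$ is nonempty; picking any $a \in \alpha^{(\sigma)}$, the triangle inequality yields $d(x,\alpha_n) \leq d(x,a) \leq (9/8) s_\sigma$ for every $x \in J_\sigma$, which integrates to $I_\sigma(\alpha_n, \mu) \leq (9/8)^r \mathcal{E}_r(\sigma)$. For the lower estimate, Lemma \ref{l12} gives $I_\sigma(\alpha_n, \mu) = I_\sigma(\alpha^{(\sigma)}, \mu)$, while Lemmas \ref{l1} and \ref{lem06} furnish the two-sided bound $M_2 \leq \kappa_\sigma \leq M_7$. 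Since $\alpha^{(\sigma)} \subset (J_\sigma)_{\zeta |J_\sigma|}$ for $\zeta = 1/8$, applying Lemma \ref{l3} with $H = \kappa_\sigma$ yields $I_\sigma(\alpha^{(\sigma)}, \mu) \geq C_{1,\kappa_\sigma} \mathcal{E}_r(\sigma) \geq C_{1,M_7} \mathcal{E}_r(\sigma)$, the last inequality because $H \mapsto C_{1,H}$ is non-increasing. Summing both estimates over $\Lambda_{k,r}$ completes the argument.

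The main content of the proof is really the uniform two-sided control $M_2 \leq \kappa_\sigma \leq M_7$ on the number of optimal points near each cylinder of the antichain, and that control has been worked out in Sections 3 and 4. Once it is in hand, the comparison proceeds cylinder by cylinder, and the doubling property enters only through the constants coming out of Lemma \ref{l3}. I therefore expect the proof of Lemma \ref{lem07} itself to be short, with no serious obstacle beyond careful bookkeeping of constants.
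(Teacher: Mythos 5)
Your proof is correct and follows essentially the same line as the paper's: decompose $I(\alpha_n,\mu)=\sum_\sigma I_\sigma(\alpha_n,\mu)$ via Lemma \ref{l100}, obtain the upper bound from the localisation of $\alpha_n$ near each $J_\sigma$, the lower bound from Lemma \ref{l3} together with the uniform bound $\kappa_\sigma\leq M_7$, and derive $\sum_\sigma\mathcal{E}_r(\sigma)\asymp\phi_{k,r}\eta_r^k$ from (\ref{lambdakr}) and Lemma \ref{l2}. The only cosmetic difference is that you use a direct triangle inequality for the upper estimate where the paper invokes Lemma \ref{l12}, and you are a bit more careful about applying Lemma \ref{l3} with $H=\kappa_\sigma$ and the monotonicity of $C_{1,H}$ rather than directly with $H=M_7$.
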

\begin{proof}
By Lemmas \ref{l100} and \ref{l12} and (\ref{lambdakr}), we obtain
\begin{eqnarray}\label{sgz1}
I(\alpha_n,\mu)=\sum_{\sigma\in\Lambda_{k,r}}I_\sigma(\alpha_n,\mu)\leq\sum_{\sigma\in\Lambda_{k,r}}\mathcal{E}_r(\sigma)\leq \phi_{k,r}\eta_r^k.
\end{eqnarray}
By applying Lemma \ref{l3} with $H:=M_7$, we obtain
\begin{equation}\label{sgzhu4}
I(\alpha_n,\mu)=\sum_{\sigma\in\Lambda_{k,r}}I_\sigma(\alpha_n\cap (J_\sigma)_{\frac{s_\sigma}{8}},\mu)\gtrsim\sum_{\sigma\in\Lambda_{k,r}}\mathcal{E}_r(\sigma)\gtrsim\phi_{k,r}\eta_r^k.
\end{equation}
Combining (\ref{sgz1}) and (\ref{sgzhu4}), the lemma follows.
\end{proof}

\begin{lemma}(cf. \cite[Lemma 2.4]{Zhu:20})\label{microapp}
Let $\nu$ be a Borel probability measure on $\mathbb{R}^q$ with compact support $K_\nu$ such that
$\sup_{x\in \mathbb{R}^q}\nu(B(x,\epsilon))\leq C\epsilon^{t}$ for every $\epsilon\in(0,\infty)$. Assume that $|K_\nu|\leq1$. Then for every $n\geq 1$, there exists a number $d_n>0$ which depends on $n,C,q,t$, such that
 \begin{eqnarray*}
 \inf_{\alpha_n\in C_{n,r}(\nu)}\underline{J}(\alpha_n,\nu)>d_n.
 \end{eqnarray*}
 \end{lemma}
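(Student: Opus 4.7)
The plan is to argue by contradiction, using weak-$*$ compactness to exploit the ``uniform'' nature of the hypothesis (the constants $C, t, q, n, r$ are fixed). Suppose the conclusion fails. Then there is a sequence of Borel probability measures $\nu_k$ satisfying the stated hypotheses with the same $C,t$ and $\alpha_n^{(k)} \in C_{n,r}(\nu_k)$ with $\underline{J}(\alpha_n^{(k)}, \nu_k) \to 0$. After translating each $\nu_k$ so that $K_{\nu_k}$ contains the origin, $|K_{\nu_k}|\leq 1$ forces $K_{\nu_k} \subset \overline{B(0,1)}$; since optimal sets lie in the convex hull of the support, $\alpha_n^{(k)}$ also lies in a fixed compact set. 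By Banach--Alaoglu and Hausdorff compactness, pass to subsequences along which $\nu_k \to \nu$ weakly and $\alpha_n^{(k)} \to \alpha$ in Hausdorff distance, with $|\alpha| \leq n$.

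Next I would verify that $\nu$ inherits the ball estimate $\sup_x \nu(B(x,\epsilon)) \leq C\epsilon^t$ (apply Portmanteau to open balls and then let $\epsilon' \downarrow \epsilon$ for closed balls). Hence $\nu$ is non-atomic and $K_\nu$ is infinite, which yields the strict inequality $e_{m,r}^r(\nu) > e_{n,r}^r(\nu)$ for every $m < n$. Short upper/lower-semicontinuity arguments (test the error against a fixed competitor, and against the limit configuration, using uniform convergence $d(\cdot, \alpha_n^{(k)}) \to d(\cdot, \alpha)$ on $\overline{B(0,1)}$ together with $\nu_k \to \nu$ weakly) give $e_{n,r}^r(\nu_k) \to e_{n,r}^r(\nu)$ and that $\alpha$ attains $e_{n,r}^r(\nu)$. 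Combined with $|\alpha|\leq n$ and the strict inequality just noted, this forces $|\alpha|=n$ and $\alpha \in C_{n,r}(\nu)$.

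Now select $a^{(k)} \in \alpha_n^{(k)}$ realizing $\underline{J}(\alpha_n^{(k)}, \nu_k)$ and, along a further subsequence, let $a^{(k)} \to a^* \in \alpha$. For any continuous $0 \leq \phi \leq 1$ compactly supported in the open Voronoi cell $P_{a^*}(\alpha)^\circ$, the strict Voronoi inequalities defining $P_{a^*}(\alpha)^\circ$ hold with a positive gap on $\mathrm{supp}(\phi)$; uniform convergence of the distance functions then gives $\mathrm{supp}(\phi) \subset P_{a^{(k)}}(\alpha_n^{(k)})$ for all sufficiently large $k$, so
\[
\int \phi(x)\, d(x,a^{(k)})^r\, d\nu_k(x)\ \leq\ I_{a^{(k)}}(\alpha_n^{(k)}, \nu_k)\ \longrightarrow\ 0.
\]
Since $d(\cdot,a^{(k)})^r \to d(\cdot, a^*)^r$ uniformly on $\overline{B(0,1)}$ and $\nu_k \to \nu$ weakly, the left-hand side tends to $\int \phi(x)\, d(x,a^*)^r\, d\nu(x)$, and that integral therefore vanishes for every admissible $\phi$. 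Monotone convergence then yields $\int_{P_{a^*}(\alpha)^\circ} d(x,a^*)^r\, d\nu = 0$, and non-atomicity of $\nu$ forces $\nu(P_{a^*}(\alpha)^\circ) = 0$.

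Finally, I would observe that $d(x,\alpha \setminus \{a^*\}) = d(x,\alpha)$ holds on $\mathbb{R}^q \setminus P_{a^*}(\alpha)^\circ$ (trivially outside $P_{a^*}(\alpha)$, and on $\partial P_{a^*}(\alpha)$ because the minimum is realized by some $a \neq a^*$); together with $\nu(P_{a^*}(\alpha)^\circ) = 0$, this gives $\int d(x, \alpha\setminus\{a^*\})^r\, d\nu = e_{n,r}^r(\nu)$, hence $e_{n-1,r}^r(\nu) \leq e_{n,r}^r(\nu)$, contradicting the strict inequality. The main obstacle is the discontinuity of the Voronoi indicator functions under weak convergence and perturbation of $\alpha$, which I sidestep by testing only against continuous bumps compactly supported in the interior of the limiting cell; a secondary technical point is the transfer of the ball estimate and of optimality through the weak-$*$ limit.
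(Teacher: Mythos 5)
Your compactness argument is sound: the hypothesis is uniform in $C,t$, so a contradicting sequence of measures together with their optimal configurations can be translated into a fixed ball, and weak-$*$ compactness of measures plus Hausdorff compactness of finite sets let you pass to a limit pair $(\nu,\alpha)$ with $\alpha\in C_{n,r}(\nu)$, from which the vanishing-Voronoi-cell argument gives $e^r_{n-1,r}(\nu)\leq e^r_{n,r}(\nu)$, contradicting strict monotonicity of $e^r_{\cdot,r}(\nu)$ on a non-atomic measure of infinite support. Your care in testing against bumps supported strictly inside $P_{a^*}(\alpha)^\circ$ correctly sidesteps the discontinuity of Voronoi indicators. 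One small omission: the argument as written presupposes $n\geq 2$ (it removes a point from $\alpha$), so the case $n=1$ needs to be handled separately, but this is immediate from the H\"older estimate --- with $\epsilon_0=(2C)^{-1/t}$ one has $\nu(B(a,\epsilon_0))\leq 1/2$ for every $a$, hence $\underline{J}(\alpha_1,\nu)=e^r_{1,r}(\nu)\geq \tfrac12\epsilon_0^r$.

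Note that the paper itself gives no proof of this lemma; it simply cites \cite[Lemma 2.4]{Zhu:20}, so there is no internal argument to compare against. It is worth pointing out, however, that a shorter and fully \emph{quantitative} derivation is available using Lemma \ref{lem03} (which is already cited as \cite[Lemma 2.3]{Zhu:20}) and avoids the diagonal extraction entirely. For $n\geq 2$ let $a_0$ achieve $\underline{J}(\alpha_n,\nu)$. Removing $a_0$ yields an $(n-1)$-point competitor, and since $d(x,\alpha_n\setminus\{a_0\})\leq |K_\nu|\leq 1$ for $x\in K_\nu$, decomposing over the Voronoi cells gives
\[
e^r_{n-1,r}(\nu)-e^r_{n,r}(\nu)\ \leq\ \int_{P_{a_0}}\!\big(d(x,\alpha_n\setminus\{a_0\})^r-d(x,a_0)^r\big)d\nu\ \leq\ \nu(P_{a_0})-I_{a_0}.
\]
The H\"older estimate bounds $\nu(P_{a_0})\leq C\epsilon^t+\epsilon^{-r}I_{a_0}$ for every $\epsilon>0$; combining with the lower bound $\zeta_{n,r}$ from Lemma \ref{lem03} and choosing $\epsilon$ small so that $C\epsilon^t\leq \tfrac12\zeta_{n,r}$ gives the explicit bound $I_{a_0}\geq \tfrac12\zeta_{n,r}(\epsilon^{-r}-1)^{-1}$. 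This route buys an explicit $d_n$ in terms of $\zeta_{n,r},C,t,r$ and avoids the non-effective compactness step, at the cost of invoking Lemma \ref{lem03}; your argument is self-contained but gives only an existence statement for $d_n$. Both are valid proofs of the statement.
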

\emph{Proof of Theorem \ref{mthm}}
Let $a$ be an arbitrary point of $\alpha_n$. By Lemma \ref{prop1}, we have $P_a(\alpha_n)\cap E\subset A_\sigma^*$ for some $\sigma\in\Lambda_{k,r}$. Thus, by Lemmas \ref{l12} and \ref{lem07},
\begin{eqnarray}\label{sgz5}
I_a(\alpha_n,\mu)\leq\sum_{\omega\in\mathcal{A}_\sigma}8^{-r}\mathcal{E}_r(\omega)\leq M_08^{-r}\eta_r^k\lesssim \frac{1}{n}e_{n,r}^r(\mu).
\end{eqnarray}
Let $H(a)$ be as defined in (\ref{union}). Then $H(a)=R_\sigma^*$ (see (\ref{sgz3})) for some $1\leq k_6\leq M_0$ and some $F_i,1\leq i\leq k_6$. By \cite[Theorem 4.1]{GL:00}, we have
\[
\Gamma(a)\in C_{T_a,r}(\mu(\cdot|H(a)))=C_{T_a,r}(\mu(\cdot|R_\sigma^*)).
\]
From the similarity of $f_\sigma$, we deduce that $f_\sigma^{-1}(\Gamma(a))\in C_{T_a,r}(\lambda_\sigma^*)$.
Using (\ref{temp1}), Lemmas \ref{lem08}, \ref{microapp} and the similarity of $f_\sigma$, we deduce
\begin{eqnarray*}
I_a(\alpha_n,\mu)&=&\int_{P_{a}(\Gamma(a))}d(x,a)^rd\mu(x)\\&=& \mu(R_\sigma^*)|R_\sigma^*|^r\int_{P_{f_\sigma^{-1}(a)}(f_\sigma^{-1}(\Gamma(a)))}d(x,f_\sigma^{-1}(a))^rd\lambda_\sigma^*(x)\\&\geq& \mu(R_\sigma^*)|R_\sigma^*|^r\min_{1\leq i\leq M_9}d_i\\&\gtrsim&\mathcal{E}_r(\sigma)\asymp\frac{1}{n}e_{n,r}^r(\mu).
\end{eqnarray*}
This and (\ref{sgz5}) complete the proof of Theorem \ref{mthm}.

\end{document}